\newcommand{\QQ}{\ensuremath{\mathbb{Q}}\xspace}
\newcommand{\ZZ}{\ensuremath{\mathbb{Z}}\xspace}
\newcommand{\Zp}{\ensuremath{\mathbb{Z}_{(p)}}\xspace}
\newcommand{\NN}{\ensuremath{\mathbb{N}}\xspace}
\newcommand{\LL}{\ensuremath{\mathbb{L}}\xspace}
\newcommand{\rarr}{\rightarrow}
\newcommand{\xrarr}[1]{\xrightarrow{#1}}
\newcommand{\mf}[1]{\ensuremath{\mathfrak{#1}}}
\newcommand{\id}{\ensuremath{\mbox{id}}}
\newcommand{\ot}{\otimes}
\newcommand{\Ext}{\ensuremath{\mathrm{Ext}}}
\newcommand{\Tor}{\ensuremath{\mathrm{Tor}}}
\newcommand{\colim@}[2]{%
  \vtop{\m@th\ialign{##\cr
    \hfil$#1\operator@font colim$\hfil\cr
    \noalign{\nointerlineskip\kern1.5\ex@}#2\cr
    \noalign{\nointerlineskip\kern-\ex@}\cr}}%
}
\newcommand{\colim}{%
  \mathop{\mathpalette\colim@{\rightarrowfill@\textstyle}}\nmlimits@
}
\DeclareMathOperator{\Spec}{\mathrm{Spec}}
\DeclareMathOperator{\Td}{\mathrm{Td}}
\DeclareMathOperator{\F}{\mathrm{F}}
\DeclareMathOperator{\CH}{\mathrm{CH}}
\DeclareMathOperator{\Ann}{\mathrm{Ann}}
\theoremstyle{definition}
\newtheorem{Def}{Definition}[section]
\newtheorem*{Def-intro}{Definition}
\newtheorem{Rk}[Def]{Remark}
\theoremstyle{plain}
\newtheorem{Th}[Def]{Theorem}
\newtheorem*{Th-intro}{Theorem}
\newtheorem{Prop}[Def]{Proposition}
\newtheorem{Cr}[Def]{Corollary}
\newtheorem{Lm}[Def]{Lemma}
\newtheorem*{Conj}{Syzygies Conjecture for Algebraic Cobordism}
\newtheorem*{BigTh-add}{Algebraic Classification of Additive Operations Theorem  (CAOT)}
\begin{document}

\title{On the Structure of Algebraic Cobordism}
\author{Pavel Sechin}
\date{}
\begin{abstract}
In this paper we investigate the structure of algebraic cobordism of Levine-Morel
as a module over the Lazard ring with the action of Landweber-Novikov and symmetric operations on it.
We show that the associated graded groups of algebraic cobordism
with respect to the topological filtration $\Omega^*_{(r)}(X)$ 
are unions of finitely presented $\LL$-modules of very specific structure.
Namely, these submodules possess a filtration
 such that the corresponding factors are either free or isomorphic
 to cyclic modules $\LL/I(p,n)x$ where $\deg x\ge \frac{p^n-1}{p-1}$.
As a corollary we prove the Syzygies Conjecture of Vishik 
on the existence of certain free $\LL$-resolutions of $\Omega^*(X)$,
and show that algebraic cobordism of a smooth surface can be described
in terms of $K_0$ together with a topological filtration.
\end{abstract}

\maketitle

\section{Introduction}

Complex cobordism $MU^*$ is an example of a generalized cohomology theory 
which contains much more information
 about the topological manifold than ordinary (aka singular) cohomology.
For example, the Conner-Floyd theorem says that $K$-theory can be obtained as a factor of $MU^*$
over a certain ideal.
On the other hand, rational cobordism $MU^*\ot\QQ$ are
canonically isomorphic to singular cohomology with the coefficients
in an infinitely generated polynomial ring $\QQ[t_1, t_2\ldots]$,
and hence the 'new' information in $MU^*$ is contained in its integral structure.
The coefficient ring of $MU^*$ 
is a polynomial ring in a countable number of variables $\ZZ[x_1, x_2\ldots]$,
and, thus, $MU^*(X)$ is a module over this ring for every space $X$.
However, not every module over this ring can be realised in such a way,
and one of the areas of research is a study of restrictions on $MU^*(X)$ as $MU^*(pt)$-module.

One of the first insights into the structure of complex cobordism appeared
when Landweber \cite{Land-op} and Novikov \cite{Nov}
 independently have calculated all stable operations
in complex cobordism $MU^*$ (in fact, they worked with co-operations on complex bordism,
but the groups of operations and co-operations turn out to be dual to each other). 
The algebra of these operations is now called the Landweber-Novikov algebra,
and it is generated by the graded components of the so-called total Landweber-Novikov operation.

Landweber later has proved foundational results
on the structure of any $MU^*(pt)$-module on which Landweber-Novikov operations act 
(\cite{Land2, Land3, Land}).
Such algebraic objects can be identified with comodules over the Hopf algebroid 
$(MU_*, MU_*(MU))$\footnote{Topologists tend to denote $MU^n(pt)$ as $MU_{-n}=MU_{-n}(pt)$ 
and we follow this convention.}.
More precisely, any such comodule which is finitely presented as a module over $MU_*$
possesses a finite filtration by comodules, such that its graded factors
 are isomorphic to $MU_*/I(p_i,n_i)$
where $I(p_i,n_i)$ is a certain ideal in $MU_*$  corresponding to a prime number $p_i$
and a natural number $n_i$. In particular, this allowed Landweber to find a big class of $MU_*$-modules,
so-called {\it Landweber-exact} modules,
s.t. for any such module $K$ 
the group $\Tor^{MU_*}_i(MU^*(X),K)$ is zero for any CW-complex $X$ and any $i>0$,
and $MU^*\ot_{MU_*} K$ becomes a generalized cohomology theory.

Quillen was first to notice that the above structure on $MU^*$ 
comes as a corollary to the fact that the ring of complex cobordism over a point
is canonically isomorphic to the Lazard ring classifying formal group laws (\cite{Quil}).
In particular, this allowed to reinterpret existing at the moment theory of Brown-Peterson cohomology $BP^*$ as the universal $p$-typical oriented theory (where $p$ is a prime number).
 That lead to the reduction 
of the study of the groups $MU^*(X)\ot\Zp$ 
to the study of $BP^*(X)$ which is structurally a simpler object.
	
The identification $MU_*\cong \LL$ is only a part of the algebraic interpretation
of the Hopf algebroid $(MU_*, MU_*(MU))$. The interpretation of $MU_*(MU)$,
or in other words, of the Landweber-Novikov algebra,
as realising strict isomorphisms between formal group laws allows to identify
 the category of comodules over $(MU_*, MU_*(MU))$ 
with the category of quasi-coherent sheaves over the stack of formal groups (\cite{Nau}).
This category is recognised as a good algebraic approximation\footnote{Note, however, that 
a slightly reformulated {\sl Mahowald uncertainty principle} says that any
algebraic approximation to stable homotopy theory must be infinitely far from correct.}
 to the $MU$-local stable homotopy category,
and the stack point of view introduces geometric intuition to its study.

However, Landweber-Novikov operations which played the main role in considerations above
are stable operations.
It was also Quillen who has introduced certain unstable operations in the picture (\cite{Quil2}),
which allowed him, for example, to prove that the graded group of complex cobordism of a CW-complex as a 
module over the Lazard ring is generated in non-negative degrees.
Note that this result is intrinsically unstable as stable operations are 'independent' 
of the grading, and therefore can not give any restrictions on the grading of elements.

The search for unstable operations in cobordism was continued by many mathematicians,
whose efforts were partially summarised by Boardman, Johnson and Wilson (\cite{BJW}).
They provided an algebraic framework
for modules over some algebra of unstable operations on $BP^*$ 
in an analogy to the stable framework of comodules over the Hopf algebroid $(BP_*, BP_*(BP))$.
They were able to prove that for a finite CW-complex the $BP$-module $BP^*(X)$
has a presentation with relations in positive degree, 
and has a filtration s.t. its factors are either $BP$ or $BP/I(n_i)$ generated in degree greater
or equal to $2\frac{p^{n_i}-1}{p-1}-1$. 

The book in which these results are stated
was published in 1995, accidentally the same year 
when Voevodsky was announcing a strategy of the 
proof of the Bloch-Kato conjecture
in which he assumed existence of so-called motivic algebraic cobordism and motivated
the search for the homotopy theory of algebraic varieties.
Subsequent foundational 
work of Morel and Voevodsky on the $\mathbf{A}^1$-homotopy theory\footnote{also known as
motivic homotopy theory}
gave an opportunity to introduce many topological notions to the world of algebraic geometry.
This process being started almost 20 years ago still carries on without any decline.
In particular, motivic algebraic cobordism was introduced and successfully applied
in the proof of the Milnor Conjecture.

The important feature of motivic homotopy theory
is the existence of two different 'circles', and thus two suspensions,
 which leads to cohomology theories
represented by spectra being bi-graded (and, generally, without periodicity
with respect to any suspension). In particular, the motivic Thom spectra
represents the motivic algebraic cobordism $MGL^{*,*}$,
the infinite Grassmannian represents algebraic K-theory $K_*$ (which is periodic with respect
to a suitable suspension and thus has only one grading)
and motivic Eilenberg-Maclane spaces represent motivic cohomology $H^{*,*}$.
However, for many of the bi-graded cohomology theories there exist a part of them
which is much more amenable to computations,
 and often carries a more geometric and direct description.
These are called small theories, as opposite to big theories.
 For example, the small theory of algebraic K-theory is $K_0$,
the small theory of motivic cohomology is the Chow ring $\oplus_n \CH^n=\oplus_n H^{2n,n}$.

The small theory $\oplus_n MGL^{2n,n}$ of motivic algebraic cobordism
was developed in a seminal paper of Levine and Morel \cite{LevMor}
and is denoted by $\Omega^*=\oplus_n \Omega^n$.
It turns out that this theory carries many features of complex cobordism in topology.
However, one has to remember that the algebraic grading of $\Omega^*$ 
corresponds to even grading of $MU^*$ while cobordisms of odd degree
have no clear interpretation in the world of algebraic varieties.

Algebraic cobordism with rational coefficients are isomorphic
to Chow groups with coefficients in a polynomial algebra over $\QQ$,
and therefore are at least as hard to study as $\CH^*\ot\QQ$.
However, integrally cobordism appear to contain much more information than Chow groups,
as e.g. integral K-theory $K_0$ can be obtained from $\Omega^*$ purely algebraically.
Current investigations of algebraic cobordism include following topics: 
calculations of algebraic cobordism of projective homogeneous varieties (e.g. \cite{Kiri, VishYag, Calmes}),
 study of different "new" oriented theories such as Morava K-theories 
obtained algebraically from algebraic cobordism (\cite{PetSem}, \cite{Sem}, \cite{Sech}) 
as well as many others among which applications to Donaldson-Thomason theory (\cite[Section 14]{LevPad}).
Some of these applications are using information which is known about
$\Omega^*$ as a module over the coefficient ring, and the goal of this paper
is to obtain results on this structure.

The similarity between $MU^*$ and $\Omega^*$
starts with the fact the ring of coefficients
of algebraic cobordism of Levine-Morel can be canonically identified with the Lazard ring.
The construction due to Panin and Smirnov (\cite{PanSmi}) 
together with the fact that 
$\Omega^*$ is a universal oriented cohomology theory
allow to define Landweber-Novikov operations on $\Omega^*$.
For a smooth variety this makes $\Omega^*(X)$ a $(MU_*, MU_*MU)$-comodule,
or, in other words, a quasi-coherent sheaf over the stack of formal group laws.
However, one should keep in mind that even for a sufficiently nice variety $X$, such as a smooth 
quadric, the group $\Omega^*(X)$ may be not finitely generated as $\LL$-module,
and Landweber's structural results 
can not be directly applied as they demand certain finiteness.

The construction of unstable operations is not a simple problem in topology.
It seems that constructing unstable operations in cohomology theories 
of algebraic varieties is even more complicated.
For example, for quite a long time after the appearance of algebraic cobordism of Levine-Morel
there were known no operations on them except for the stable Landweber-Novikov operations. 
Vishik later proved that they generate all stable operations.
The first unstable operations, symmetric operations,
 were constructed by Vishik using an elaborate and elegant construction 
on $\Omega^*$ (\cite{Vish_Symm_2}). These operations proved to be more subtle
 than Landweber-Novikov operations with respect to 2-divisibility phenomena,
and were applied to the questions of rationality of cycles and related issues.

The next major progress in development of operations on small theories
was Vishik's theorem (\cite{Vish1, Vish2}) 
which gives a classification of {\sl all} operations between theories
of the form $\Omega^*\ot_\LL A$ (which are known as free theories).
More precisely, it reduces the classification problem
to a certain algebraic system of equations which depends solely on
the formal group laws of theories involved.
For example, this allowed Vishik to introduce integral Adams operations
on $\Omega^*$, symmetric operations for all primes 
and prove relations of the latter with Quillen-type Steenrod operations on $\Omega^*$.
 The author also used Vishik's fundamental theorem to introduce Chern classes as
operations from algebraic Morava K-theories to the so-called $p^n$-typical theories 
(\cite{Sech} and forthcoming papers).
Symmetric operations and these Chern classes are not known to have analogues in topology,
even though a similar classification result was obtained by Kashiwabara for
operations between generalized cohomology theories on CW-complexes (\cite{Kash}).
However, an important difference between Vishik's and Kashiwabara's results is that 
the latter requires  certain additional properties of generalised cohomology theories
except from being oriented, and these are often not clear to be satisfied. 

At the moment it seems that symmetric operations are the most fundamental
of all operations introduced above. However, in the author's opinion
 they were not yet applied in the literature in their full strength.
The partial goal of this paper is to mitigate this shortcoming
and to obtain structural results of the algebraic cobordism
as a module over the Lazard ring with their help.

In fact, the first application of symmetric operations to the structure of algebraic cobordism
 is due to Vishik, and is the theorem that as $\LL$-module $\Omega^*(X)$ has relations generated 
 in positive degrees. This is a similar statement to a topological result above,
  however Vishik was not aware of it\footnote{Communicated to the author privately.}.
Vishik's result has an important corollary (Prop. \ref{prop:finitely_presented}),
 that for any smooth variety $X$
the module $\Omega^*(X)$ is a union of coherent comodules over $(MU_*, MU_*MU)$,
which allows to apply Landweber's structural results in some situations.

We use symmetric operations to prove the following result, also similar to the known topological statement.

\begin{Th-intro}[Th. \ref{th:filtration}]
The associated graded groups of algebraic cobordism
with respect to the topological filtration $\Omega^*_{(r)}(X)$ 
are unions of finitely presented $\LL$-modules which have a filtration
 such that the corresponding factors are isomorphic
as graded $\LL$-modules to cyclic modules $\LL y$, $\deg y\ge 0$,
or $\LL/I(p,n)x$, $\deg x\ge \frac{p^n-1}{p-1}$.
\end{Th-intro}
 
Our interest in the subject of the structure of cobordism 
comes from the following conjecture formulated by Vishik.
We prove it in Theorem \ref{th:syzygies}.

\begin{Conj}[Vishik, {\cite[Conj. 4.5 p. 981]{Vish_Lazard}}]
Let $X$ be a smooth variety of dimension $d$. 
Then $\Omega^*(X)$ has a free $\LL$-resolution whose $j$-th term has generators concentrated 
in codimensions between $j$ and $d$. 

In particular, the cohomological $\mathbb{L}$-dimension
of the $\mathbb{L}$-module $\Omega^*(X)$ is less or equal to $d$.
\end{Conj}

The fact that generators of $\Omega^*(X)$ lie in non-negative codimensions
is due to Levine and Morel, and is the basis for the so-called generalized degree formula.
As we have already mentioned Vishik has proved that the relations of $\Omega^*(X)$ 
as {\LL-module}
lie in positive codimensions with the use of symmetric operations in \textit{op.cit}.
 In fact, it follows from this statement that $\Omega^*(X)$ is isomorphic as $\LL$-module 
 to $M\ot_{\LL^{\{> -d\}}} \LL$
 where $\LL^{\{> -d\}}$ is a subring of $\LL$ generated by elements 
 of degrees greater than $-d$ and $M$ is a module over this ring which is 
 non-canonically isomorphic to $\ZZ[x_1,\ldots, x_{d-1}]$.
One can deduce from this that $\Omega^*(X)$
has a projective resolution of length $d$ so that the last assertion of the Conjecture is true.

Another application of the structural result above (Th. \ref{th:filtration})
is the description of the algebraic cobordism of a surface 
(the case of a curve was treated by Vishik in {\it op. cit.}).
We also prove a similar statement 
on the structure of $BP^*$ of a smooth variety
of dimension less or equal to $p$ (Th.~\ref{th:BP_pfold}).

\begin{Th-intro}[Th. \ref{th:cobord_surface}]
Let $S$ be a smooth surface.
Then there exist the following exact sequence
$$ 0\rarr \CH^2(S)\ot\LL\rarr \tau^1\Omega^*(S)\rarr \CH^1(S)\ot\LL\rarr 0,$$
where the extension of $\LL$-modules 
is defined by an extension of abelian groups 
$$ 0\rarr \CH^2(S)\cong \tau^2 K_0(S)\rarr \tau^1 K_0(S)\rarr gr_\tau^1 K_0(S)\cong \CH^1(S)\rarr 0.$$

As there is a decomposition $\Omega^*(S)=\LL\cdot 1 \oplus \tau^1\Omega^*(S)$,
this gives a description of algebraic cobordism of a smooth surface as a $\LL$-module in terms
of $K_0(S)$ together with a topological filtration on it.
\end{Th-intro}

We should note that at some point when working with symmetric operations
the author has found a book by Boardman, Johnson and Wilson \cite{BJW} where some unstable operations
on complex cobordism were considered. Surprisingly these operations satisfied
several properties similar to that of symmetric operations. In fact, the author
has read the statement about the filtration on topological $BP$-theory
stated above in this book where it is proved with the use of those unstable operations
 ([Th. 21.12, \textit{op.cit.}]). 
  We do not claim here that those unstable operations are 'the same' as symmetric operations in any sense,
 and to prove the analogous statements in algebraic cobordism
we use the construction of Vishik of symmetric operations.

{\it Outline.} 
In Section \ref{sec:prelim}
we recall the basic facts about algebraic cobordism and operations on them. 
In particular, we show that for a smooth variety $X$ 
the $\LL$-module $\Omega^*(X)$ 
has a structure of $(MU_*, MU_*(MU))$-comodule (Prop. \ref{prop:cobord_comodule})
and use Vishik's results on $\LL$-relations of $\Omega^*$
to show that $\Omega^*(X)$ is an ind-coherent $\LL$-module (Prop. \ref{prop:finitely_presented}).

Section \ref{sec:topfilt_symm} contains the main technical result of the paper.
We investigate the action of symmetric operations on the point (Section \ref{sec:linearity}),
and deduce from it that the restrictions on the structure of $BP^*(X)$
as $BP$-module (Prop. \ref{prop:degree_estimates}).  
Using Landweber's structural results we lift these results to $\Omega^*$
in Th. \ref{th:filtration}.

Section \ref{sec:cobord_surface} is an application of the structural
results to the algebraic cobordism of a surface and the $BP$-theory
of varieties of dimension not greater than $p$.

Section \ref{sec:resolutions} contains 
the proof of the Syzygies Conjecture of Vishik.
It starts with a homological criterion in terms of graded groups $\Tor_*^\LL(\Omega^*(X),\LL/\LL^{<0})$
for the conjecture to be satisfied (Prop. \ref{prop:tor_syzygies}),
and then structural results are applied 
to show that these homological conditions are fulfilled (Th. \ref{th:syzygies}).

\section*{Acknowledgements}

The author has started to work on the structure of algebraic cobordism
in perfect conditions of the Hausdorff Trimester Program "K-theory and Related Fields"
at Hausdorff Research Institute for Mathematics in Bonn in May 2017. 
The author acknowledges the 
support by DFG-Forschergruppe 1920.

The author is extremely grateful to Alexander Vishik 
who has read  drafts of this paper 
finding several mistakes and making a huge number of suggestions to improve the text.

\section{Preliminaries}\label{sec:prelim}

Fix a base field $F$ of characteristic zero.

\subsection{Algebraic cobordism and Brown-Peterson cohomology}\label{sec:def}

In this section we briefly recall the main properties of algebraic cobordism following \cite{LevMor}.

\begin{Def}[{\cite[Def. 1.1.2]{LevMor}}]\label{def:oriented}
An {\sl oriented cohomology theory} $A^*$ is a presheaf of graded rings
on the category of smooth quasi-projective varieties over $F$
supplied with the data of push-forward maps for projective morphisms.
Namely, for each projective morphism of smooth varieties $f:X\rarr Y$ of codimension $d$,
a homomorphism of graded $A^*(X)$-modules $f^A_*:A^*(X)\rarr A^{*+d}(Y)$ is defined.

The structure of push-forwards has to satisfy the following axioms 
(for precise statements see {\sl ibid}):
functoriality for compositions (A1), base change for transversal morphisms (A2),
projection formula, projective bundle theorem (PB),
 $\mathbb{A}^1$-homotopy invariance (EH).
\end{Def}

Each oriented theory $A^*$ can be endowed with Chern classes of vector bundles $c_i^A$
using the classical method due to Grothendieck.
First Chern class allows to associate the formal group law 
$\F_A\in A^*(\Spec F)[[x,y]]$ to the theory $A^*$
so that it satisfies the following equation for every pair of line bundles $L_1, L_2$
over a smooth variety $X$: $c_1^A (L_1\ot L_2)=\F_A(c_1^A(L_1),c_1^A(L_2))$.
Recall that the formal group laws (FGLs) over a ring $A$ are in 1-to-1 correspondence
with the ring morphisms from the Lazard ring $\LL$ to $A$. In particular,
the construction above yields a morphism
 of graded rings $\LL\rarr A^*(\Spec F)$ for all oriented theories $A^*$.

Algebraic cobordism $\Omega^*$ is an oriented theory
which to a smooth variety $X$ associates a free abelian group
generated by (classes of isomorphisms of) projective morphisms to $X$
factored by classical cobordant relations with smooth fibres
and by so-called double-point relations (see \cite[0.4]{LevPad} for details). 
In particular, $\Omega^*(\Spec F)$ is generated by classes
of smooth projective varieties over $F$.
Apart from this, we will only use various universal properties of $\Omega^*$ described below.

\begin{Th}[Levine-Morel, {\cite[Th. 1.2.6]{LevMor}}]\label{th:universality_alg_cobord}
Algebraic cobordism $\Omega^*$ is the universal oriented theory,
i.e. for any oriented theory $A^*$ there exist a unique morphism of
oriented theories $\Omega^*\rarr A^*$.
\end{Th}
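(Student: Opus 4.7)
The plan is to exhibit a canonical candidate for the morphism $\vartheta_A:\Omega^*\ra A^*$ that is forced by the axioms, and then to verify that the candidate descends through the relations which define $\Omega^*$. Since $\Omega^*(X)$ is generated (as an abelian group) by classes of projective morphisms $[f:Y\ra X]$ from smooth varieties, and any such class is precisely $f^\Omega_*(1_Y)$, any morphism of oriented theories $\phi:\Omega^*\ra A^*$ must be a graded ring homomorphism that commutes with push-forwards, and so is forced to satisfy
\[
\phi([f:Y\ra X]) \;=\; \phi(f^\Omega_*(1_Y)) \;=\; f^A_*(\phi(1_Y)) \;=\; f^A_*(1_Y).
\]
This immediately gives uniqueness and pins down the only possible formula for the existence part.

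For existence I would first define $\vartheta_A([f:Y\ra X]):=f^A_*(1_Y)$ on the free abelian group generated by projective morphisms from smooth varieties, and then check four things: (i) well-definedness with respect to the relations Levine-Morel impose (cobordism-type relations and double-point relations); (ii) naturality in $X$ under smooth pullbacks, extended to arbitrary morphisms via deformation to the normal cone for l.c.i.\ maps; (iii) compatibility with push-forwards along projective morphisms, which is immediate from functoriality (A1); and (iv) multiplicativity, which via the projection formula reduces to a transversality argument handled by (A2) and which also pins down $\vartheta_A(1_X)=1_X$. Steps (ii)--(iv) are essentially formal once (i) is in hand and use only the axioms of an oriented theory. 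The naive cobordism relations $[Y_0]=[Y_1]$ for $Y_i$ the smooth fibres over $0,1$ of some projective $W\ra X\times\mathbb{A}^1$ follow from $\mathbb{A}^1$-homotopy invariance (EH) together with base change (A2).

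The main obstacle is showing that $\vartheta_A$ descends through the double-point relations, since these encode the formal group law structure: a double-point degeneration over $\mathbb{A}^1$ yields an identity among classes of a smooth fibre and a singular fibre consisting of two smooth components meeting transversally, with a correction term built from a projectivisation of a normal bundle. Transporting this to $A^*$ requires rewriting $f^A_*(1_Y)$ for the relevant $Y$ via the projective bundle formula (PB) and then recognising the resulting combinations of first Chern classes as the two sides of the FGL identity $c_1^A(L_1\tp L_2)=\F_A(c_1^A(L_1),c_1^A(L_2))$; this is precisely what makes the argument work for an arbitrary $A^*$, since $\F_A$ is the only piece of structure that distinguishes different oriented theories at the level of line bundles. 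Once this verification is carried out, the induced map $\vartheta_A:\Omega^*\ra A^*$ is the desired unique morphism of oriented theories.
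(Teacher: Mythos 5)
The paper does not prove this theorem; it is quoted directly from Levine--Morel \cite[Th.~1.2.6]{LevMor}, so there is no in-paper proof to compare against. On its own merits, your uniqueness argument is correct and essentially complete: every element of $\Omega^*(X)$ is a $\ZZ$-linear combination of classes $f^\Omega_*(1_Y)$, so any morphism of oriented theories is forced to send $[f:Y\rarr X]$ to $f^A_*(1_Y)$.

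For existence, note that you are implicitly taking the Levine--Pandharipande presentation of $\Omega^*$ (generators: projective morphisms from smooth varieties; relations: naive cobordism plus double-point) as the definition. That is the description recalled in the paper, but it is not how Levine--Morel prove Theorem~1.2.6: their construction proceeds through a larger pre-theory generated by cobordism cycles $(f:Y\rarr X,\,L_1,\ldots,L_r)$ modulo axioms (Dim), (Sect), (FGL), and universality is then almost immediate once the oriented-theory axioms are verified for the quotient. Your route instead follows \cite{LevPad}, so it must either assume their theorem $\omega^*\cong\Omega^*$ or carry all the accompanying work.

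The genuine gap is the descent through the double-point relation. You reduce it to ``recognising the resulting combinations of first Chern classes as the two sides of the FGL identity,'' but that is not where the difficulty lies. The relation has the form
\[
[W_0\rarr X]\;=\;[A\rarr X]+[B\rarr X]-[\mathbb{P}(\pi)\rarr X],
\]
where $W\rarr X\times\mathbb{P}^1$ is a double-point degeneration with smooth fibre $W_0$ over $0$ and $W_\infty=A\cup_D B$ a nodal fibre, $\mathbb{P}(\pi)=\mathbb{P}(\mathcal{O}_D\oplus N_{D/A})$. Verifying this in an arbitrary oriented theory $A^*$ requires equating $[W_0]$ with the degenerate fibre class via (EH) and (A2), decomposing the nodal fibre via blow-up and excess-intersection identities using (PB) and the first Chern class formalism, and only at the end invoking $\F_A$ to combine the contributions; this is a real computation (cf.\ \cite{LevPad}, not a one-line observation). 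Separately, your steps~(ii)--(iv) are not ``essentially formal'': defining pullbacks on the generators-and-relations object along arbitrary morphisms of smooth varieties is itself one of the main technical points of \cite{LevPad}, and if you take that presentation as the definition of $\Omega^*$ you cannot disentangle the construction of $\vartheta_A$ from showing $\Omega^*$ is an oriented theory in the first place. As written the sketch identifies the right obstacle but does not overcome it.
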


Recall that a morphism of oriented theories $A^*\rarr B^*$
is a morphism of presheaves of graded rings which commutes 
with push-forward maps (and with pull-back maps by a definition of a presheaf).

\begin{Th}[Levine-Morel, {[{\it loc.cit.}, Th. 1.2.7]}]
The ring of coefficients of $\Omega^*$ is canonically isomorphic
to the Lazard ring $\LL$
and the formal group law $F_\Omega$ is the universal formal group law.
\end{Th}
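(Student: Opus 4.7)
The plan is direct: the formal group law $F_\Omega$ on $\Omega^*(\Spec F)$ yields, by the universal property of $\LL$, a canonical graded ring homomorphism $\phi : \LL \to \Omega^*(\Spec F)$, and the second assertion of the theorem -- that $F_\Omega$ corresponds to the universal FGL under this isomorphism -- is then tautological from the construction of $\phi$. It therefore suffices to prove that $\phi$ is bijective, which I would attack by handling surjectivity and injectivity separately.

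For surjectivity, I would use that $\Omega^*(\Spec F)$ is generated as an abelian group by classes $[X]$ of smooth projective $F$-varieties, and reduce to an explicit generating family using the double-point relations built into the definition. The target family consists of products of projective spaces together with Milnor hypersurfaces $H_{i,j} \subset \mathbb{P}^i \times \mathbb{P}^j$; each such class visibly lies in the image of $\phi$, since it can be computed via the projective bundle formula purely in terms of the coefficients of $F_\Omega$. The inductive step -- exchanging an arbitrary $[X]$ for a combination of such standard classes -- would proceed via blow-ups along smooth subvarieties followed by applications of the double-point relation, mirroring Milnor's argument identifying generators of complex bordism.

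For injectivity, the cleanest route is a comparison with complex cobordism. Assuming first that $F \subset \CC$, complex realization gives a morphism of oriented theories from $\Omega^*$ to $X \mapsto MU^{2*}(X(\CC))$, which on $\Spec F$ yields a ring map $\psi : \Omega^*(\Spec F) \to MU^{2*}(\mathrm{pt})$, and by Quillen's theorem the latter is isomorphic to $\LL$ with the universal FGL. The composite $\psi \circ \phi$ classifies the universal FGL on $\LL$, hence must be the identity, forcing $\phi$ to be injective. For an arbitrary characteristic-zero field one reduces to this embeddable case by spreading out: every class in $\Omega^*(\Spec F)$ descends to a finitely generated subfield which embeds into $\CC$. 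Alternatively, one can bypass $\CC$ altogether by producing enough $\LL$-valued characteristic numbers (for instance Landweber--Novikov numbers) on $\Omega^*(\Spec F)$ to separate classes.

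The main obstacle will be the surjectivity step: controlling the double-point relations well enough to pass from the enormous collection of smooth projective varieties to a tractable generating family requires a careful induction on dimension with judiciously chosen blow-ups, and this is essentially the technical core of Levine--Morel's construction of $\Omega^*$. Everything else -- the formal existence of $\phi$, the tautological match of FGLs, and the injectivity via comparison or characteristic numbers -- is comparatively soft.
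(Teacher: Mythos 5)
The paper does not actually prove this statement; it is cited verbatim from Levine--Morel [LM07, Th.~1.2.7], so there is no in-paper argument to compare against. Your sketch has the right overall shape (the map $\phi:\LL\to\Omega^*(\Spec F)$ exists and matches the FGLs tautologically, and the theorem reduces to bijectivity of $\phi$), but a few comments on how it relates to what Levine--Morel actually do are in order.

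For surjectivity, your ``mirror Milnor's argument'' plan is closer in spirit to the later Levine--Pandharipande reworking of the theory via double-point degenerations than to Levine--Morel's original proof. Levine--Morel establish surjectivity as a consequence of the generalized degree formula, which in turn rests on Hironaka's resolution of singularities and weak factorization. Your proposal is not wrong, but the step you describe as ``judiciously chosen blow-ups'' is precisely where resolution of singularities enters and where the topological analogy breaks: there is no transversality in algebraic geometry, so one cannot simply perturb a generic cobordism to put it in standard position the way Milnor does, and the entire inductive reduction has to be routed through resolution. You should flag this explicitly, since it is the sole reason the paper fixes $\mathrm{char}\,F = 0$. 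Without resolution the argument does not close, and to date the theorem is genuinely open in positive characteristic.

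For injectivity, both of your suggested routes work, but they are not on an equal footing. The comparison with $MU^{2*}$ for $F\subset\CC$ requires you to first establish that $X\mapsto MU^{2*}(X(\CC))$ is an oriented cohomology theory in the Levine--Morel sense (with functorial projective pushforwards satisfying (A1), (A2), (PB), (EH)); this is a real, if not deep, piece of work, and the universality of $\Omega^*$ only then hands you the realization map. The cleaner route --- the one essentially used in Levine--Morel and alluded to in the present paper when it observes that the Hurewicz operation $H:\Omega^*\to\CH^*[t_1,t_2,\ldots]$ restricts to an inclusion $\LL\hookrightarrow\ZZ[t_1,\ldots]$ on coefficients --- is your ``Landweber--Novikov numbers'' alternative: the composite $\LL\xrightarrow{\phi}\Omega^*(\Spec F)\xrightarrow{H}\ZZ[t_1,\ldots]$ classifies the additive FGL twisted by $\gamma_H$, which is known to be injective on $\LL$, hence $\phi$ is injective, entirely within the algebraic framework and without any spreading-out. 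I would drop the complex realization route as the primary one and promote the characteristic-number argument.
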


The only examples of oriented theories that we will need
are (graded) {\sl free theories} \cite[Rem. 2.4.14]{LevMor}, i.e. $A^*:=\Omega^*\ot_{\LL} A$
where the map of graded rings $\LL \rarr A$ corresponds to a formal group law over $A$,
and the identification $\Omega^*(\Spec F)\cong \LL$ is hidden in the notation.
Chow groups and Grothendieck group of vector bundles $K_0[\beta, \beta^{-1}]$, $\deg \beta=-1$,
 can be obtained this way.
The most important example of a free theory in this paper 
is the so-called {\sl Brown-Peterson cohomology}
which we now describe.

Let $p$ be a prime number.
There exists a notion of a $p$-typical formal group law \cite{Cart},
so that a formal group law over a torsion-free $\Zp$-algebra 
is $p$-typical if and only if its logarithm is of the form $\sum_{i=0}^\infty l_i t^{p^i}$.
There exist a universal $p$-typical formal group law defined
over the graded ring $BP\cong \Zp[v_1,\ldots, v_n,\ldots]$ where $\deg v_i=1-p^i$.
The corresponding free theory is called the {\sl Brown-Peterson cohomology} and denoted as $BP^*$
($p$ is not usually included in the notation).

The map $\LL \rarr BP$ classifying the universal $p$-typical formal group law
has a section localized at $p$: $BP\rarr \LL_{(p)}$ 
which sends $v_j$ to a polynomial generator $x_{p^j-1}$ of $\LL$.
In particular, the map $BP\rarr \LL_{(p)}$ is flat.

The result of Cartier saying that every formal group law over any $\Zp$-algebra
 is canonically 
isomorphic to a $p$-typical formal group law ({\it{}op.cit.}) 
has the following well-known corollary
 for oriented theories. 

\begin{Prop}[{cf. \cite[Th. 4]{Quil}},
  see also Prop. \ref{prop:BP_Omega_algebraic}]\label{prop:from_Omega_to_BP}
There exist a multiplicative projector on $\Omega^*\ot\Zp$
whose image is canonically identified with $BP^*$.
In particular, any additive endo-operation on $\Omega^*\ot\Zp$
canonically restricts to an operation on $BP^*$. 

Moreover, as an $\LL$-module $\Omega^*\ot\Zp(X)$
is isomorphic to $BP^*(X)[t_j, j\neq p^i-1]$
where the structure of $\LL$-module on the latter is defined via 
the map $\theta:\LL_{(p)}\cong BP[t_j, j\neq p^i-1]$.
\end{Prop}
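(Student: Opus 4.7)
The plan is to obtain the multiplicative projector from the Cartier strict isomorphism and to deduce the module decomposition by a base change computation.

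First, I would invoke Cartier's theorem to produce a canonical strict isomorphism $\phi_{\mathrm{Cart}}\in \LL_{(p)}[[t]]$ from the universal FGL over $\LL_{(p)}$ to a $p$-typical FGL; the target FGL is precisely the one classified by the section $\theta:BP\rarr \LL_{(p)}$ mentioned in the statement. The crucial feature of $\phi_{\mathrm{Cart}}$ is idempotence: $p$-typicalising an already $p$-typical FGL gives the identity strict isomorphism.

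Second, I would convert $\phi_{\mathrm{Cart}}$ into an endo-operation on $\Omega^*\ot\Zp$. A strict isomorphism of FGLs defines a reorientation of an oriented theory: keeping the presheaf of rings the same but replacing $c_1(L)$ by $\phi_{\mathrm{Cart}}(c_1(L))$ yields a new oriented theory whose FGL is the target of $\phi_{\mathrm{Cart}}$. Applying Theorem \ref{th:universality_alg_cobord} to the reoriented $\Omega^*\ot\Zp$ produces a unique morphism of oriented theories, and $\Zp$-linearity turns it into the sought multiplicative operation $\pi:\Omega^*\ot\Zp\rarr\Omega^*\ot\Zp$. On the coefficient ring, $\pi_*$ is the classifying map of the $p$-typical FGL, hence factors as $\LL_{(p)}\rarr BP\xrightarrow{\theta}\LL_{(p)}$. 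Idempotence $\pi\circ\pi=\pi$ then follows from the idempotence of $\phi_{\mathrm{Cart}}$ together with the uniqueness in the universal property applied to the twice-reoriented theory. To identify the image with $BP^*$, one uses that on a smooth variety $X$ the image of $\pi$ is generated by pushforwards of (reoriented) Chern class polynomials, which is precisely $\Omega^*(X)\ot_\LL BP = BP^*(X)$. Canonical restriction of additive endo-operations follows by sandwiching with $\pi$ on both sides.

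Finally, the \emph{moreover} assertion is a formal base change. The section $\theta$ realises $\LL_{(p)}$ as a polynomial ring $BP[t_j:j\neq p^i-1]$, and in particular as a free (hence flat) $BP$-module. Since $BP^*(X)=\Omega^*(X)\ot_\LL BP$ and $\Omega^*(X)\ot\Zp=\Omega^*(X)\ot_\LL\LL_{(p)}$, associativity of tensor product yields
$$\Omega^*(X)\ot\Zp\;\cong\;\LL_{(p)}\ot_{BP}BP^*(X)\;\cong\;BP^*(X)[t_j:j\neq p^i-1]$$
as $\LL$-modules, where the $\LL$-action on the right factor is via $\theta$. The main obstacle I anticipate is the rigorous construction of $\pi$ as an endomorphism of a single theory and the verification of idempotence: one must check that the reoriented $\Omega^*\ot\Zp$ satisfies the axioms of an oriented cohomology theory so that Theorem \ref{th:universality_alg_cobord} applies, and then compare $\pi\circ\pi$ with $\pi$ through the universal property. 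This is essentially what Proposition \ref{prop:BP_Omega_algebraic}, referenced alongside Quillen in the statement, is meant to package up.
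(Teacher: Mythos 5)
Your overall strategy—producing the Quillen idempotent from the Cartier $p$-typicalisation via reorientation and the universal property of $\Omega^*$—is the standard one and is what the cited reference \cite{Quil} does in topology. The construction of $\pi$ via Prop.~\ref{prop:mult_op_from_cobord} and the verification of idempotence through the universal property are fine. However, there is a genuine gap in the \emph{moreover} clause, and it is precisely the point that Prop.~\ref{prop:BP_Omega_algebraic} is there to address, not the construction of $\pi$ as you guessed.

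The issue is the claim that ``associativity of tensor product yields $\Omega^*(X)\ot\Zp \cong \LL_{(p)}\ot_{BP}BP^*(X)$.'' Unwinding the definitions, $BP^*(X) = \Omega^*(X)\ot_{\LL,\,c}BP$ where $c:\LL\to BP$ classifies the universal $p$-typical FGL, so $\LL_{(p)}\ot_{BP,\theta}BP^*(X)\cong\Omega^*(X)\ot_{\LL,\,\theta\circ c}\LL_{(p)}$. But $\Omega^*(X)\ot\Zp$ is $\Omega^*(X)\ot_{\LL,\,\mathrm{loc}}\LL_{(p)}$, and $\theta\circ c$ is \emph{not} the localization map $\mathrm{loc}$: it classifies the $p$-typical FGL over $\LL_{(p)}$, which is a different (though strictly isomorphic) formal group law. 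So there is no associativity to invoke, and the two tensor products do not coincide on the nose. They are canonically isomorphic, but one must use the $(\LL,\LL B)$-comodule structure on $\Omega^*(X)$ (equivalently, the action of the total Landweber--Novikov operation) to transport along the Cartier strict isomorphism. This is exactly the content of Prop.~\ref{prop:BP_Omega_algebraic}: for any $(\LL,\LL B)$-comodule $M$ and any two ring maps $f_1,f_2:\LL\to A$ classifying strictly isomorphic FGLs, there is a canonical $A$-linear isomorphism $M\ot_{\LL,f_1}A\cong M\ot_{\LL,f_2}A$. Without this comodule input, the module identification in the \emph{moreover} part does not follow.

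A smaller point: the identification of $\mathrm{Im}(\pi)(X)$ with $BP^*(X)$ is asserted via ``pushforwards of reoriented Chern class polynomials,'' which is rather loose; the cleanest route is to note that $\mathrm{Im}(\pi)$ is an oriented theory with coefficient ring $\theta(BP)\cong BP$ and $p$-typical FGL, then produce a map $BP^*=\Omega^*\ot_\LL BP\to\mathrm{Im}(\pi)$ by universality of $\Omega^*$ and check it is an isomorphism using the direct sum decomposition provided by the idempotent together with the module computation from Prop.~\ref{prop:BP_Omega_algebraic}. In other words, the comodule argument underlies both the image identification and the polynomial decomposition, and a self-contained proof should put it at the center.
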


\subsection{Topological filtration on free theories and operations}\label{sec:top_filt}

Define the {\sl topological filtration} (or sometimes referred to also as codimensional filtration) 
on $\Omega^*(X)$
by the formula
$$\tau^i\Omega^*(X):=\cup_{U\subset X: \mathrm{codim}(X\setminus U)\ge i} 
\mathrm{Ker \ }\left(\Omega^*(X)\rarr \Omega^*(U)\right).$$

It turns out that this filtration can be obtained using only the information 
of the structure of $\LL$-module on $\Omega^*(X)$.

\begin{Prop}[Levine-Morel, {\cite[Th. 4.5.7]{LevMor}}]\label{prop:top_omega}
$$ \tau^i \Omega^n=\cup_{m\le n-i} \LL^{m} \Omega^{n-m}. $$
\end{Prop}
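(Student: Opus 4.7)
The plan is to establish both inclusions separately, with the generalized degree formula of Levine--Morel as the main engine. Note first that $\LL^m=0$ for $m>0$, so the sum on the right ranges effectively over $m\le\min(0,n-i)$; in particular for $i\le n$ the term $m=0$ is present and the assertion collapses to $\tau^i\Omega^n=\Omega^n$, whose content is the equality $\Omega^n=\tau^n\Omega^n$.

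For the inclusion $\supseteq$, I would first record that $\Omega^k(X)=\tau^k\Omega^k(X)$ for every $k$. By the construction of algebraic cobordism, $\Omega^k(X)$ is generated over $\LL$ by classes $[f\colon Y\to X]$ with $f$ projective and $\dim Y=\dim X-k$, and each such class manifestly lies in the kernel of restriction to the complement of the image of $f$, which has codimension $\ge k$. Multiplication by a class pulled back from $\Spec F$ commutes with pushforward via the projection formula and therefore does not enlarge the support. Consequently, for $m\le n-i$ one obtains $\LL^m\cdot\Omega^{n-m}(X)\subseteq\tau^{n-m}\Omega^n(X)\subseteq\tau^i\Omega^n(X)$.

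For the inclusion $\subseteq$, take $\alpha\in\tau^i\Omega^n(X)$; using the localization sequence for $\Omega^*$ write $\alpha=\sum_j[f_j\colon Y_j\to X]$ with each $f_j$ factoring through some common closed $Z\subseteq X$ of codimension $\ge i$ and $Y_j$ smooth projective of dimension $\dim X-n$. The argument then proceeds by Noetherian induction on $\dim Z$. Setting $Z_j:=\overline{f_j(Y_j)}$ and $c_j:=\mathrm{codim}_X Z_j\ge i$ (automatically $c_j\ge n$, since $Y_j$ dominates $Z_j$), I would choose a resolution $\tilde Z_j\to Z_j$ and factor $f_j$ through $\tilde Z_j$. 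The generalized degree formula applied to the induced map $Y_j\to\tilde Z_j$ then yields
\[
[f_j\colon Y_j\to X]=[P_j]\cdot[\tilde Z_j\to X]+\beta_j,
\]
where $P_j$ is a smooth projective generic fiber of dimension $c_j-n$ and $\beta_j$ is supported on a proper closed subvariety of $Z_j$. The main term lies in $\LL^{n-c_j}\cdot\Omega^{c_j}(X)$ with $n-c_j\le n-i$, as required, and $\beta_j$ is absorbed by the inductive hypothesis (its support has dimension strictly less than $\dim Z_j$, hence codimension $>c_j\ge i$).

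The principal obstacle is the generalized degree formula itself, which is a nontrivial result resting on resolution of singularities (so on $\mathrm{char}\,F=0$) and on a careful comparison between $\Omega^*(Y_j)$ and a product-like neighborhood of a generic fiber of $Y_j\to\tilde Z_j$. Once it is granted, the degree bookkeeping is immediate from $\dim P_j=c_j-n$, and the induction terminates since a class supported on a zero-dimensional subscheme is a finite sum of $\LL$-multiples of classes of closed points, which is of the required form.
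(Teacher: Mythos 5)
The paper does not prove this proposition; it cites it directly to Levine--Morel \cite[Th.~4.5.7]{LevMor}. So there is no in-paper argument to compare yours against, and I will just assess your reconstruction on its own.

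Your outline is the right one, and it is essentially how Levine--Morel argue: the inclusion $\supseteq$ follows because every class $[Y\to X]$ with $\dim Y=\dim X-k$ is supported in codimension $\ge k$ and multiplication by $\LL$ does not enlarge support, and the inclusion $\subseteq$ combines the localization sequence (to lift $\alpha$ to $\Omega_*(Z)$ with $\mathrm{codim}_X Z\ge i$), the generalized degree formula, and Noetherian induction on $\dim Z$. The degree bookkeeping you do is correct: $[P_j]\in\LL^{n-c_j}$, $[\tilde Z_j\to X]\in\Omega^{c_j}(X)$, and $n-c_j\le n-i$ because $c_j\ge i$; the zero-dimensional base case is also handled correctly. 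One step, however, is not quite licensed as written: the projective morphism $f_j\colon Y_j\to Z_j$ does \emph{not} in general factor through a chosen resolution $\tilde Z_j\to Z_j$. You first have to resolve the indeterminacy of the rational map $Y_j\dashrightarrow\tilde Z_j$ by blowing up $Y_j$ over the locus where $\tilde Z_j\to Z_j$ fails to be an isomorphism. This replaces $[Y_j\to X]$ by a modified class, differing from it by terms supported on a proper closed subset of $Z_j$ --- terms that are then absorbed by the same Noetherian induction, so the issue is repairable, but it should be said. Alternatively, one can invoke the generalized degree formula in the form Levine--Morel actually state it (splitting off the restriction of $\alpha$ to the generic point of $Z_j$), which avoids lifting $f_j$ to $\tilde Z_j$ entirely. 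Finally, note that your argument establishes containment in the \emph{sum} $\sum_{m\le n-i}\LL^m\Omega^{n-m}$; the ``$\cup$'' in the statement should be read accordingly.
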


For every free theory $A^*$ we will denote the graded sum of factors 
$\oplus_n \tau^r A^n/\tau^{r+1} A^n$ by $A^*_{(r)}$.
 Note that these are graded $A$-modules.
For example, $\Omega^*_{(r)}$ plays a main role in this paper
and several of the main results are formulated 
in terms of the structure of modules $\Omega^*_{(r)}$ over the Lazard ring.
We would like to emphasise that information about these modules 
also gives (partial) information on the structure of $\Omega^*$ as a module over the Lazard ring
due to Prop. \ref{prop:top_omega}.

Levine and Morel show that there exist a surjection of $\LL$-modules \cite[Cor. 4.5.8]{LevMor}

\begin{equation}\label{eq:rho_CH_Omega}
\CH^r(X)\ot_\ZZ \LL \xrarr{\rho} \Omega^*_{(r)}(X),
\end{equation}
and clearly this  induces $A$-linear map 
$\CH^r(X)\ot_\ZZ A \xrarr{\rho_A} A^*_{(r)}(X)$ 
for any free theory $A^*$ with the ring of coefficients $A$.

Any operation $\phi:A^*\rarr B^*$ (which preserves zero) between free theories 
preserves the topological filtration,
and in certain circumstances (e.g. if it is additive) it induces an operation between corresponding
graded factors $\phi_\tau:  A^*_{(\bullet)} \rarr B^*_{(\bullet)}$. 
It can be shown using Vishik's Riemann-Roch theorem for regular embeddings 
and all operations (\cite[Prop. 5.19]{Vish2})
that the operation $\phi_\tau$ can be 'lifted' to an operation on Chow groups 
so that the following diagram is commutative:
\begin{diagram}
\CH^\bullet(X)\ot_\ZZ A &\rTo_{\phi_{\CH}} &\CH^\bullet(X)\ot_\ZZ B \\
\dTo_{\rho_A} & &\dTo_{\rho_B}\\
A^*_{(\bullet)}(X) &\rTo^{\phi_{\tau}} & B^*_{(\bullet)}(X).
\end{diagram}
 
As there are not so many operations between Chow groups with coefficients, 
this description often simplifies the picture vastly.
We will not go into details here, as in all cases when such a description is needed
it will be provided separately (Props. \ref{prop:LN_action_cobord_top}, \ref{prop:symm_action_bp}).

\subsection{Multiplicative operations between oriented theories}

We follow  \cite{PanSmi}, \cite{LevMor} and \cite{Vish1}.

A {\sl multiplicative operation $\Phi$} from an oriented theory $A^*$ 
to $B^*$ is a functor $\Phi:A^*\rarr B^*$ of presheaves of rings 
on the category of smooth varieties over $F$.
Every multiplicative operation $\Phi$ gives rise to a morphism of formal group laws
$(A,\F_A)\xrarr{\phi,\gamma_\Phi} (B,\F_B)$ where
$\phi:A\rarr B$ is the value of $\Phi$ on the point $\Spec F$,
$\gamma_\Phi(t) \in B[[t]]t$ is the series, s.t. 
for every line bundle $L$ over any smooth variety $X$
we have $\Phi(c_1^A(L))=\gamma_\Phi(c_1^B(L))$. 
It follows that $\phi$ and $\gamma_\Phi$ satisfy the following equation:
$\gamma_\Phi (\F_B(x,y))=\phi(\F_A)(\gamma_\Phi(x), \gamma_\Phi(y))$.

If the series $\frac{\gamma_\Phi(t)}{t}$ is invertible,
i.e. $\gamma_\Phi(t)=b_0t+\ldots$ where $b_0\in B^\times$,
then its inverse is called the {\sl Todd genus} $\Td_{\Phi}$.
The standard convention is that one might plug in vector bundle $V$ 
over a smooth variety $X$ in the Todd genus,
so that $\Td_{\Phi}(V)=\prod_{i=1}^{\mathrm{rk}(V)} \Td_{\Phi}(\lambda_i^B)$
 is an element of $B^{\mathrm{rk}(V)}(X)$
 where $\lambda_i^B$ are $B$-roots of $V$.

The following result which is a generalisation of the Grothendieck-Riemann-Roch theorem
was proved by Panin and Smirnov before the algebraic cobordism of Levine-Morel appeared.
However, it is not hard to check that free theories are examples of oriented theories
as defined by Panin-Smirnov.

We will use this result for a calculation with the Quillen-type Steenrod operation
on algebraic cobordism (Lemma \ref{lm:St_vn}).

\begin{Th}[Panin-Smirnov, {\cite[Th. 5.1.4]{PanSmi}}]\label{th:mult_riemann_roch}
Let $\Phi:A^*\rarr B^*$ be a multiplicative operation between oriented theories,
and assume that $\gamma_\Phi$ is invertible, i.e. the Todd genus $\Td_\Phi$ is defined.
Let $f:X\rarr Y$ be a projective morphism of smooth varieties, $\alpha \in A^*(X)$.

Then $f^B_*(\Phi(\alpha) \Td_\Phi(T_X) )= \Phi(f_*^A(\alpha)) \Td_\Phi(T_Y)$.

In particular, if $a\in A$  is $p_*(1_X)$ for some smooth 
projective variety $p:X\rarr \Spec F$,
then $\Phi(a)=\phi(a)= p_* (\Td_\Phi(T_X))$.
\end{Th}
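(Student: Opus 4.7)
My plan is to reduce the general projective morphism $f$ to two basic cases via the standard factorization into a closed immersion and a projective bundle projection, verify the formula in each case, and then deduce the "in particular" statement by specialization. Concretely, since $f$ is projective, I factor it as $X \xrightarrow{i} Y\times \mathbb{P}^N \xrightarrow{\pi} Y$ where $i$ is a closed immersion into some projective space over $Y$. The short exact sequence of tangent bundles $0\to T_X \to i^*T_{Y\times\mathbb{P}^N}\to N_{X/Y\times\mathbb{P}^N}\to 0$, together with the multiplicativity of $\Td_\Phi$ on Whitney sums (which follows from the splitting principle and the definition of $\Td_\Phi$ through $B$-roots), combined with the functoriality $f_*^B = \pi_*^B\circ i_*^B$ and the multiplicativity of $\Phi$, reduce the general statement to the two special cases of a regular closed embedding and of a projection from a projective bundle.

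For a regular closed embedding $i: X\hookrightarrow Y$, I would invoke deformation to the normal cone to reduce further to the case of the zero-section $s: X\hookrightarrow V$ of a vector bundle. Both sides of the asserted identity are $A^*(X)$-linear in $\alpha$ via the projection formula, so it suffices to check the equality at $\alpha = 1_X$. Pulling back by $s^*$ and applying the splitting principle reduces the assertion to a product of rank-one identities, each of which is an immediate consequence of the defining relations $\Phi(c_1^A(L))=\gamma_\Phi(c_1^B(L))$ and $\gamma_\Phi(t)\cdot \Td_\Phi(t) = t$, i.e., of the very construction of the Todd class.

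For the projective bundle case $\pi:\mathbb{P}(V)\to Y$, the projective bundle theorem writes any class in $A^*(\mathbb{P}(V))$ uniquely as $\sum_j \pi^*(\alpha_j)\,\xi_A^j$ with $\xi_A = c_1^A(\mathcal{O}(1))$, and Quillen's formula computes $\pi_*^A(\xi_A^j)$ explicitly in terms of $\F_A$ alone. Applying $\Phi$ to this and comparing with the analogous formula for $\pi_*^B(\gamma_\Phi(\xi_B)^j\,\Td_\Phi(T_\pi))$, where the factor $\Td_\Phi(T_\pi)$ is controlled by the relative Euler sequence, turns the desired equality into an identity of power series in $\xi_B$ with coefficients in $B^*(Y)$. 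This identity is exactly the algebraic translation of the compatibility $\gamma_\Phi(\F_B(x,y)) = \phi(\F_A)(\gamma_\Phi(x),\gamma_\Phi(y))$, i.e., of the fact that $(\phi,\gamma_\Phi)$ is a morphism of formal group laws. The "in particular" statement then follows by specialising to $Y=\Spec F$ and $\alpha = 1_X$: one has $T_{\Spec F}=0$ and hence $\Td_\Phi(T_{\Spec F}) = 1$, so the general formula collapses to $\phi(a) = p_*^B(\Td_\Phi(T_X))$.

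The main obstacle is the projective bundle case, because there one must cleanly translate the abstract push-forward along $\pi$ into a manipulation of power series governed by $\F_A$ and $\F_B$, and then show that the compatibility of $\gamma_\Phi$ with the two formal group laws is precisely what makes the resulting two power-series expressions coincide. By contrast, once deformation to the normal cone is available, the closed embedding case is essentially formal, and the factorization step is a routine bookkeeping exercise with Whitney sums of tangent and normal bundles.
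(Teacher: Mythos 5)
The paper does not prove this statement; it is cited directly from Panin--Smirnov \cite[Th.~5.1.4]{PanSmi}, so there is no internal proof to compare against. Your outline is the standard Grothendieck--Riemann--Roch-style argument and does match the route Panin--Smirnov take: factor $f$ as a closed embedding into $Y\times\mathbb{P}^N$ followed by the projection, handle the embedding via deformation to the normal cone, handle the projection via the projective bundle theorem and Quillen's residue formula, and use multiplicativity of $\Td_\Phi$ on Whitney sums to stitch the two cases together.

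One imprecision worth correcting: neither side of the identity is $A^*(X)$-linear in $\alpha$, since $\Phi$ is multiplicative but not additive, so $\alpha\mapsto\Phi(\alpha)$ is not linear. The reduction to $\alpha=1_X$ in the zero-section situation $s:X\hookrightarrow V$ still works, but for a different reason, namely homotopy invariance combined with the projection formula and multiplicativity of $\Phi$: since $p^*:A^*(X)\to A^*(V)$ is a ring isomorphism with inverse $s^*$, one has
$s^B_*(\Phi(\alpha)\Td_\Phi(T_X)) = p^*\bigl(\Phi(\alpha)\Td_\Phi(T_X)\bigr)\cdot s^B_*(1)$ and likewise $\Phi(s^A_*(\alpha))\cdot\Td_\Phi(T_V) = p^*\Phi(\alpha)\cdot\Phi(s^A_*(1))\cdot\Td_\Phi(T_V)$,
so the common factor $p^*\Phi(\alpha)$ appears on both sides and one is left with an $\alpha$-independent identity of Thom classes, which is where the splitting principle and the relation $\gamma_\Phi(t)\,\Td_\Phi(t)=t$ enter. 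Phrased that way the closed-embedding step is sound. The projective-bundle step is correct in outline but is the genuinely non-formal part: it is a power-series computation governed by $\gamma_\Phi(\F_B(x,y)) = \phi(\F_A)(\gamma_\Phi(x),\gamma_\Phi(y))$, and one should be explicit about how the relative Todd class of the Euler sequence interacts with the residue formula before calling it a bookkeeping exercise.
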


The universality of algebraic cobordism allows to construct
multiplicative operations from it in an efficient manner.
The following will be used to describe Landweber-Novikov 
and Quillen-type Steenrod operations on algebraic cobordism.

\begin{Prop}[Panin-Smirnov+Levine-Morel, see e.g. {\cite[Th. 3.7]{Vish1}}]\label{prop:mult_op_from_cobord}
Let $B^*$ be an oriented theory, let $\gamma = b_0t+\ldots$ be a series in $B[[t]]t$, 
and assume that  $b_0$ is invertible in $B$.

Then there exist a unique multiplicative operation
$\Phi:\Omega^*\rarr B^*$ s.t. $\gamma_\Phi=\gamma$.
\end{Prop}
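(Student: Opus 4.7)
The plan is to exploit the universality of algebraic cobordism (Theorem \ref{th:universality_alg_cobord}) by a reorientation of the target theory $B^*$. Given the invertible leading coefficient $b_0$, I will construct a new oriented theory $B_\gamma^*$ on the same underlying presheaf of rings and with the same pull-backs as $B^*$, but with a modified first Chern class $c_1^{B_\gamma}(L) := \gamma(c_1^B(L))$ for every line bundle $L$. The associated formal group law becomes $\F_{B_\gamma}(x,y) = \gamma\bigl(\F_B(\gamma^{-1}(x), \gamma^{-1}(y))\bigr)$, which makes sense because the invertibility of $b_0$ guarantees that $\gamma$ has a compositional inverse in $B[[t]]$.

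First I would verify that $B_\gamma^*$ really is an oriented theory in the sense of Definition \ref{def:oriented}. Pull-backs and the ring structure are inherited unchanged; the substantive step is to define new push-forwards by Todd-twisting, e.g. $f^{B_\gamma}_*(\alpha) := f^B_*(\alpha \cdot \Td(T_{X/Y}))$ for a suitable Todd class associated with $\gamma$, so that the axioms (A1), (A2), the projection formula, (PB) and (EH) are satisfied. Here invertibility of $b_0$ is essential: it makes the Todd class a unit and, equivalently, guarantees that the powers of $c_1^{B_\gamma}(\OO(1))$ still form a $B^*(X)$-basis on $\mathbb{P}^n_X$, so that (PB) persists after reorientation. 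This is the classical reorientation procedure of Panin-Smirnov, and the verification is formal but does require some care.

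Once $B_\gamma^*$ is recognised as an oriented theory, universality (Theorem \ref{th:universality_alg_cobord}) produces a unique morphism of oriented theories $\Phi \colon \Omega^* \rarr B_\gamma^*$. Regarded as a multiplicative operation from $\Omega^*$ to $B^*$ (forgetting the push-forwards on the target), $\Phi$ satisfies $\Phi(c_1^\Omega(L)) = c_1^{B_\gamma}(L) = \gamma(c_1^B(L))$, hence $\gamma_\Phi = \gamma$. Uniqueness follows by reversing the argument: any multiplicative operation $\Psi\colon \Omega^* \rarr B^*$ with $\gamma_\Psi = \gamma$ agrees with $\Phi$ on all first Chern classes of line bundles; by multiplicativity, naturality, the splitting principle via the projective bundle formula in $B^*$, together with the Panin-Smirnov Riemann-Roch theorem (Theorem \ref{th:mult_riemann_roch}) applied to $\Psi$, one sees that $\Psi$ is forced to commute with push-forwards into $B_\gamma^*$ and thus to coincide with the unique morphism of oriented theories $\Omega^* \rarr B_\gamma^*$ given by universality.

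The main obstacle is the setup of the reoriented theory $B_\gamma^*$: one must show that the Todd-twisted push-forwards satisfy all the Levine-Morel/Panin-Smirnov axioms, in particular compatibility with transversal base change and the projective bundle theorem. Everything else is essentially a consequence of universality. Invertibility of $b_0$ is the single non-trivial hypothesis that makes the construction go through.
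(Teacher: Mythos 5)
Your argument is correct and is the standard reorientation-plus-universality proof underlying this result: build the reoriented theory $B^*_\gamma$ with $c_1^{B_\gamma}(L)=\gamma(c_1^B(L))$ and Todd-twisted push-forwards, apply the Levine--Morel universality of $\Omega^*$ for existence, and use Panin--Smirnov Riemann--Roch (Theorem \ref{th:mult_riemann_roch}) to force any $\Psi$ with $\gamma_\Psi=\gamma$ to commute with the $B_\gamma$-push-forwards and hence agree with the canonical morphism. The paper does not give its own proof and simply cites Panin--Smirnov and Vishik, so your proposal matches the intended argument.
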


In particular, this Proposition allows 
to define a multiplicative operation $H:\Omega^*\rarr \CH^*[t_1,t_2,\ldots]$
with $\gamma_H(x)=x+\sum_{i=1}^\infty t_i x^{i+1}$, $\deg t_i=-i$.
Over a point this operation defines an inclusion of rings 
$\LL \hookrightarrow \ZZ[t_1,\ldots]$ (corresponding to a Hurewicz map in the topology of $MU$),
and the image of a class of variety $[X]$
is a polynomial whose coefficients are characteristic numbers of $X$.

Denote by $I(p)$ the ideal in $\Omega^*(\Spec F)\cong \LL$
generated by $p$ and the classes of projective varieties
whose all characteristic numbers are divisible by $p$.
One can show that $I(p)$ is generated
by elements $x_{p^i-1}$ of degree $1-p^i$, one for each $i\ge 1$.
Moreover, the set of these generators $x_{p^i-1}$ can be completed to 
a set of generators of $\LL$.
For $n\colon 1\le n\le \infty$ denote by $I(p,n)=(p, x_{p-1},\ldots, x_{p^{n-1}-1})$ the subideal of $I(p)$
generated by the elements of dimension $\le p^{n-1}-1$, by $I(p,1)$ denote$(p)$,
and denote also $I(p,\infty)=I(p)$.

Note that the image of element $x_{p^i-1}$ in $BP$ is denoted by $v_i$, 
and $v_0$ often denotes $p\in BP$. These elements generate $BP$ freely,
i.e. $BP\cong \Zp[v_1,\ldots, v_n,\ldots]$,
and the splitting of the projector to $BP^*$ (Prop. \ref{prop:from_Omega_to_BP})
sends $v_i$ to $x_{p^i-1}$ in $\LL_{(p)}$.
We will denote by $I(n)$\footnote{
Thus, we have very abusing notations as e.g. $I(3)$ might refer
 to an ideal in $\LL$ as well as to an ideal in that $BP$ which is a $\ZZ_{(3)}$-algebra.
 We hope this does not lead to any misunderstandings throughout the paper,
 as it should always be clear whether we are working with $\Omega^*$ or $BP^*$.}
  the ideal $(v_0,v_1,\ldots, v_{n-1})$ in $BP$ for $n\ge 1$,
and by $I(\infty)=\cup_{n} I(n)$. Therefore, the map $BP\rarr \LL_{(p)}$
sends $I(n)$ to $I(p,n)$, and $I(n)\ot_{BP} \LL_{(p)}=I(p,n)$.

\subsection{Landweber-Novikov operations on algebraic cobordism}

The goal of this section is to recall some results on the structure of
$(MU_*,MU_*(MU))$-comodules and explain why they can be applied to the algebraic cobordism.

By definition free theories are in 1-to-1 correspondence with formal group laws.
However, many of the formal group laws are isomorphic,
and corresponding theories turn out to be isomorphic as presheaves of rings,
or, in other words, there exist 
an invertible multiplicative operation between these theories.
Let us explain this in more detail for the case of so-called strict isomorphisms.

A morphism $(\phi, \gamma):(A,F_1)\rarr (A,F_2)$ 
between formal group laws over the ring $A$
is called a {\sl strict isomorphism} if $\phi=\id$,
and $\gamma\in t+A[[t]]t^2$.
Given a strict isomorphism,
the Panin-Smirnov reorientation of an oriented theory yields an invertible multiplicative operation
from a presheaf of rings $A^*$ to itself.

Thus, from the point of view of {\it orientable} cohomology theories
(which are presheaves of rings which can be endowed with the structure of push-forwards, 
but this structure is not fixed), the algebraic object which should appear in their classification
is not a formal group law, 
but a formal group law up to an isomorphism, or at least up to a strict isomorphism. 

FGLs over a ring form a set and are classified by the maps from the Lazard ring,
however FGLs with a strict isomorphism (sIso) over a ring form a groupoid 
as one might compose strict isomorphisms.
The algebraic structure which 'classifies groupoids' is called a Hopf algebroid (\cite[A1.1.1]{Rav}),
and, in particular, the groupoid (FGL, sIso) is classified
by the Hopf algebroid $(\LL, \LL B)$ often denoted as $(MU_*, MU_*(MU))$
 in the topological literature
(see e.g. \cite[A2.1.16, 4.1.11]{Rav}). 
Here $\LL B=\LL \ot_\ZZ \ZZ[b_1, b_2,\ldots]$, $B=\ZZ[b_1, b_2,\ldots]$.

The total Landweber-Novikov operation in algebraic cobordism
is defined as the universal strict isomorphism, and thus specialises 
to all strict isomorphisms between any pair of free theories.
This allows to define the structure of a right $(\LL, \LL B)$-comodule on $\Omega^*(X)$
for every smooth variety $X$.

\begin{Def}
Let $\gamma_{L-N}$ be the power series over
the graded ring $B=\ZZ[b_1,b_2,\ldots]\subset \LL B$
given by the formula: $\gamma_{L-N}(t)=t+\sum_{i\ge 1} b_it^{i+1}$.

The {\sl total Landweber-Novikov operation} is the unique operation 
(see Prop. \ref{prop:mult_op_from_cobord})
corresponding to $\gamma_{L-N}$:
$$ S_{L-N}^{tot}: \Omega^* \rarr \Omega^*\ot_{\LL} \LL B.$$
\end{Def}

This operation induces an operation on the graded algebraic cobordisms $\Omega^*_{(r)}$,
and it can be easily described in terms of the cycles that generate this $\LL$-module
(see Section \ref{sec:top_filt}).

\begin{Prop}\label{prop:LN_action_cobord_top}
Let $X$ be a smooth variety, and denote by $z$ the image $\rho(Z)$ of some element $Z\in \CH^r(X)$
 in the group $\Omega^*_{(r)}$. Let $\lambda\in\LL$.
 Then $S^{tot}_{L-N}(\lambda z)= S^{tot}_{L-N}(\lambda)z$ in $\Omega^*_{(r)}(X)$. 
\end{Prop}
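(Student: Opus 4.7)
The plan is to combine the multiplicativity of $S^{tot}_{L-N}$ with the Panin--Smirnov Riemann--Roch theorem (Th.~\ref{th:mult_riemann_roch}) in order to show that $S^{tot}_{L-N}$ fixes any class in the image of $\rho$ modulo the next piece of the topological filtration, after which the general formula will follow formally from multiplicativity.

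First, by $\ZZ$-linearity of both sides in $Z$, I would reduce to the case where $Z$ is a single prime cycle of codimension $r$ in $X$. The construction of $\rho$ behind \cite[Cor. 4.5.8]{LevMor} then provides a projective morphism $f\colon \tilde Y\to X$ of codimension $r$, with $\tilde Y$ smooth, such that $z_0:=f_*(1_{\tilde Y})\in \tau^r\Omega^*(X)$ is a lift of $z$. The problem reduces to proving the congruence
\[
S^{tot}_{L-N}(z_0)\equiv z_0 \pmod{\tau^{r+1}\Omega^*(X)\ot_\LL \LL B}.
\]

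To establish this, note that $\gamma_{L-N}(t)=t+b_1t^2+\ldots$ has leading coefficient $1$, so the Todd genus $\Td_{L-N}$ is defined and is of the form $1+(\text{terms in positive codimension with coefficients in }B^{<0})$. Applying Theorem~\ref{th:mult_riemann_roch} with $\alpha=1_{\tilde Y}$ gives
\[
S^{tot}_{L-N}(f_*(1_{\tilde Y}))=f_*(\Td_{L-N}(T_{\tilde Y}))\cdot \Td_{L-N}(T_X)^{-1}.
\]
Writing $\Td_{L-N}(T_{\tilde Y})=1+\beta$ with $\beta\in\bigoplus_{k\ge 1}\Omega^k(\tilde Y)\ot B$, the pushforward $f_*(\beta)$ lies in $\tau^{r+1}\Omega^*(X)\ot B$ because $f$ has codimension $r$ and the topological filtration is compatible with projective pushforwards (Prop.~\ref{prop:top_omega}). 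Similarly $\Td_{L-N}(T_X)^{-1}=1+\eta$ with $\eta$ of positive codimension, so multiplication by $1+\eta$ adds only further terms in $\tau^{r+1}$. This yields the required congruence for $z_0$.

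Finally, multiplicativity of $S^{tot}_{L-N}$ gives
\[
S^{tot}_{L-N}(\lambda z_0)=S^{tot}_{L-N}(\lambda)\cdot S^{tot}_{L-N}(z_0)\equiv S^{tot}_{L-N}(\lambda)\cdot z_0 \pmod{\tau^{r+1}\Omega^*(X)\ot_\LL \LL B},
\]
since $S^{tot}_{L-N}(\lambda)\in \LL\ot_\ZZ B$ and multiplication by such an element preserves the topological filtration. Passing to $\Omega^*_{(r)}(X)\ot B$ gives the stated equality. The only delicate step I expect is the careful bookkeeping showing that every correction from $\beta$ and $\eta$ really lands in $\tau^{r+1}$ after pushforward and multiplication; this is essentially routine once one invokes Prop.~\ref{prop:top_omega} and the definition of $\tau^\bullet$ via unions of kernels to open subschemes.
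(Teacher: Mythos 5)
Your proof is correct and follows essentially the same route as the paper's: reduce to a nice projective lift $z_0$ of the cycle, apply the Panin--Smirnov Riemann--Roch theorem to show $S^{tot}_{L-N}(z_0)\equiv z_0$ modulo $\tau^{r+1}$, and conclude by multiplicativity. The only cosmetic difference is that the paper phrases the reduction as "assume $Z$ is represented by a smooth subvariety $i\colon Z\hookrightarrow X$" and works with $i_*1_Z$, whereas you work with a general projective morphism $f\colon \tilde Y\to X$ (which is actually the more faithful account of how $\rho$ is constructed, via resolution of singularities).
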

\begin{proof} 
As we are working modulo the $r+1$-th part of the topological filtration 
we may assume that $Z$ is represented by a smooth subvariety $i:Z\hookrightarrow X$.
Then Riemann-Roch Theorem \ref{th:mult_riemann_roch}
shows that $S^{tot}_{L-N}(i_* 1_Z)$ equals $i_* 1_Z$ modulo the $r+1$-th part 
of the topological filtration.
The claim follows from the multiplicativity of $S_{L-N}^{tot}$.
\end{proof}

It might seem that the following proposition is more or less tautological.
This is partly true, but 
in the sketch of a proof we actually check that $\Omega^*(X)$
is a right graded comodule over the Hopf algebroid $(MU_*, MU_*(MU))$
via the identification of the latter with $(\LL, \LL B)$
made explicit in \cite[A2.1.16]{Rav}.
This will allow us to apply structural results of Landweber to algebraic cobordism below.

\begin{Prop}\label{prop:cobord_comodule}
The total Landweber-Novikov operation
defines on $\Omega^*$ the structure of 
presheaf of right graded comodules over the Hopf algebroid $(\LL, \LL B)$
on the category of smooth varieties.
\end{Prop}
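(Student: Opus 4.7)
The plan is to verify the two right-comodule axioms---counit and coassociativity---for the map $\psi := S_{L-N}^{tot} : \Omega^*(X) \to \Omega^*(X)\otimes_\LL \LL B$ after first pinning down the Hopf algebroid structure of $(\LL, \LL B)$ in FGL terms. Namely, I would recall that the left and right units $\eta_L, \eta_R : \LL \to \LL B$ classify the source and target formal group laws of the universal strict isomorphism $\gamma_{L-N}(t) = t + \sum_{i\ge 1} b_i t^{i+1}$; that the counit $\epsilon : \LL B \to \LL$ sends every $b_i$ to zero (specialising $\gamma_{L-N}$ to the identity strict isomorphism); and that the comultiplication $\Delta : \LL B \to \LL B \otimes_\LL \LL B$ is the unique ring map whose effect on $\gamma_{L-N}$ is the composition of two strict isomorphisms. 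This is the identification made explicit in \cite[A2.1.16]{Rav}.

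Once this dictionary is in place, both axioms reduce to identities between multiplicative operations out of $\Omega^*$, which can be checked by the uniqueness in Proposition \ref{prop:mult_op_from_cobord}. For the counit, $(\id\otimes\epsilon)\circ\psi$ is a multiplicative operation $\Omega^* \to \Omega^*$ whose associated series $\gamma$ is obtained from $\gamma_{L-N}$ by setting all $b_i = 0$, hence equals $t$; the unique such operation is the identity, so $(\id\otimes\epsilon)\circ\psi = \id$. For coassociativity, I would compare the two multiplicative operations $(\psi\otimes\id)\circ\psi$ and $(\id\otimes\Delta)\circ\psi$ from $\Omega^*$ into the free theory $\Omega^*\otimes_\LL (\LL B \otimes_\LL \LL B)$. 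The first sends $c_1^\Omega(L)$ to $\gamma_{L-N}(\gamma_{L-N}(c_1^\Omega(L)))$ (iterated strict isomorphism), while the second sends it to $\Delta(\gamma_{L-N})(c_1^\Omega(L))$; these series agree by the very definition of $\Delta$, and so uniqueness forces equality of the two operations.

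The presheaf-of-comodules claim then follows formally: since $\psi$ is a natural transformation of presheaves of rings (being a multiplicative operation), for every morphism $f: X \to Y$ of smooth varieties the pullback $f^*$ commutes with $\psi$, so $\psi_X$ is $\LL$-linear and compatible with the restriction maps, and the grading is preserved because $b_i$ has degree $-i$ and $\gamma_{L-N}$ is homogeneous of degree $-1$.

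The main obstacle, as the paper itself hints, is matching conventions: one must be careful that the Hopf algebroid structure on $(\LL, \LL B)$ produced from the groupoid (FGL, strict iso) really agrees on the nose with the one $(MU_*, MU_*(MU))$ used in the topology literature, and in particular that our $\gamma_{L-N}$ realises the \textit{right} (as opposed to left) coaction convention. Once this bookkeeping is fixed, every step reduces to the universal property of $\Omega^*$ applied to strict isomorphisms, which is why the result is essentially tautological.
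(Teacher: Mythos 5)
Your proposal follows the same route as the paper: identify the Hopf algebroid structure on $(\LL,\LL B)$ in FGL terms as in \cite[A2.1.16]{Rav}, reduce counitarity and coassociativity to equalities of multiplicative operations out of $\Omega^*$, and settle each by comparing the defining power series via the uniqueness in Prop.~\ref{prop:mult_op_from_cobord}. The only wrinkle is a minor bookkeeping slip at the end (with $\deg b_i=-i$ the series $\gamma_{L-N}$ preserves degree, i.e.\ is homogeneous of degree $1$ in $t$ rather than $-1$), which does not affect the argument.
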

\begin{proof}
By definition a right graded $(\LL, \LL B)$-comodule is a graded $\LL$-module $M$
together with a map $\psi:M\rarr M\ot_\LL \LL B$ of $\LL$-modules
where the structure of $\LL$-module on $M\ot_\LL \LL B$ is defined via the map 
$S:\LL \rarr \LL B$ corresponding to the universal formal group law over $\LL$
twisted by the series $\gamma_{L-N}(t)$. Note that $S$ is equal
to the action of the total Landweber-Novikov operation over a point.
The map $\psi$ has to satisfy the following conditions:
 1) (counitarity) $(\id_M\ot \epsilon)\circ \psi = \id_M$ and 
 2) (coassociativity) $(\id_M\ot \Delta)\circ \psi = (\psi\ot \id_{\LL B} )\circ \psi$.
Here $\epsilon$ and $\Delta$ denote two of the five structural maps of the Hopf algebroid.

Thus,  in both 1) and 2) 
we need to check that two operations from $\Omega^*$ to $\Omega^*$ 
or to $\Omega^*\ot_\LL \LL B\ot_\LL \LL B$
are equal.
Note that in both cases these operations are multiplicative,
and therefore by Prop. \ref{prop:mult_op_from_cobord}
it is enough to check that the series defining corresponding morphisms of formal group laws are the same.
 Note that the composition of morphisms of formal group laws $(A,F_A)\xrarr{(f,\gamma_f)} (B,F_B)$, 
$(B,F_B)\xrarr{(g,\gamma_g)} (C,F_C)$ is defined by a pair $(g\circ f, g(\gamma_f)(\gamma_g))$.

For 1) it is clear, as $\epsilon:\LL B\rarr \LL$ is defined on $b_i$ as zero for all $i\ge 0$,
and therefore $\epsilon (\gamma)(t) = t$.

%Denote by $S$ the map $\LL \rarr \LL B$ which is equal to the action of $S_{L-N}^{tot}$ on a point,
%or in other words corresponds to a universal formal group law twisted by $\gamma$.
%The morphism of formal group laws corresponding to the operation $S_{L-N}^{tot}$,
%thus, is defined by a pair $(S, \gamma)$.

For 2) we have to compare the series 
$(S\ot_\ZZ \id_{\ZZ B})(\gamma_2)(\gamma_1(t))=\sum_{j\ge 0} (\sum_{i\ge 0} b_i t^{i+1})^{j+1} \ot b_j$
where $\gamma_i = j_i(\gamma)$, $i=1,2$, $j_1,j_2:\LL B\rarr \LL B\ot_\LL \LL B$ are $(\id \ot 1)$, $(1\ot \id)$, respectively,
with the series $\Delta(\gamma (t))$.
The morphism $\Delta:\LL B \rarr \LL B \ot \LL B$
is defined by $\gamma(t) \mapsto \gamma_2(\gamma_1(t))$.
Thus, the series defining $(\id_M\ot \Delta)\circ \psi$ is also $\gamma_2(\gamma_1(t))$
(note that the map $S$ does not act on coefficients of the~series~$\gamma_2$).
%and is equal to the first series above since $S_{tot}^{L-N}|_{pt}\ot id_B$.
% the composition of the series $\gamma$ by itself
%and acts on $b_i$ by the following formula:
%$$ \sum_{i\ge 0} \Delta(b_i)= \sum_{j\ge 0} \left(\sum_{i\ge 0} b_i\right)^{j+1}\ot b_j,$$
%
%where the equality takes place in a graded ring, and in every grading only finitely many summands appear.
%It is clear now that two formulas are the same.
\end{proof}

In particular, $\Omega^*(\Spec F)=\LL$ is a $(\LL, \LL B)$-comodule,
and it is easy to see that its subcomodules are precisely ideals which are invariant 
w.r.to Landweber-Novikov operations. We will call such ideals
{\sl invariant} throughout the article. 

The reason why we had to introduce the Hopf algebroid into the game
is that the foundational results about the action of Landweber-Novikov operations
in topology are written in this language. 
We will use mainly the following results.

\begin{Th}[Landweber]\label{th:invariant_ideals}
\leavevmode
\begin{enumerate}
\item[$\LL$)]  {\cite[Th. 2.7]{Land3}}
If $I$ is an invariant prime ideal  in $\LL$,
 then $I$ is one of the ideals $0$, $I(p,n)$ for some prime number $p$, $1\le n\le \infty$.
 
\item[$BP$)] {\cite[Th. $2.2_{BP}$]{Land}}
If $I$ is an invariant prime ideal  in $BP$,
 then $I$ is one of the ideals $0$, $I(n)$, $1\le n\le \infty$.
\end{enumerate}
\end{Th}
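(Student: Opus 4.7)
My plan is to handle both parts in parallel, reducing the $\LL$-statement to the $BP$-statement. The first direction—that the listed ideals are invariant and prime—is a direct calculation. With the standard (Hazewinkel or Araki) polynomial generators $v_n$ of $BP$, one checks by induction on $n$ that
$$\psi(v_n) \equiv v_n \ot 1 \pmod{I(n) \ot BP_*(BP)},$$
so that the image of $v_n$ in $BP/I(n)$ is primitive. This implies that each $I(n)$ is closed under the coaction, and primality follows from the ring identification $BP/I(n) \cong \mb{F}_p[v_n, v_{n+1}, \ldots]$ (and similarly $\LL/I(p,n) \cong \mb{F}_p[x_{p^n-1}, x_{p^{n+1}-1}, \ldots]$), both of which are integral domains.

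For the converse in the $\LL$-case, I would reduce to $BP$. Given a nonzero invariant prime $I \subset \LL$, the graded domain $\LL/I$ has either residue characteristic zero or some prime $p$; a direct computation on the logarithm generators shows that $\LL\ot\QQ$ admits no nonzero proper invariant ideal, so the first case is impossible and some $p$ lies in $I$. After base change to $\LL_{(p)}$, the flat splitting $\theta: BP \rarr \LL_{(p)}$ of Prop.~\ref{prop:from_Omega_to_BP} intertwines the coactions, and $\theta^{-1}(I\ot\Zp)$ is an invariant prime in $BP$; so it suffices to prove the $BP$-statement.

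For $BP$, let $I$ be a nonzero invariant prime ideal and choose $n$ maximal with $I(n) \subseteq I$. If $n=\infty$, then $I = I(\infty)$ by primality together with the identification $BP/I(\infty) = \mb{F}_p$, so assume $n < \infty$ and $v_n \notin I$. The key input is the explicit formula (due to Quillen) for the coaction modulo $I(n)$:
$$\psi(v_{n+k}) \equiv v_{n+k} \ot 1 + v_n \ot t_k^{p^n} - v_n^{p^k} \ot t_k \pmod{I(n)}.$$
Given any $x \in I\setminus I(n)$, I would expand its image in $BP/I(n) = \mb{F}_p[v_n, v_{n+1}, \ldots]$, apply $\psi$, and read off coefficients of suitable monomials in the $t_i$'s: each such coefficient lies in $I/I(n)$, and a filtration argument on the total degree of $v_{n+1}, v_{n+2}, \ldots$ produces an element of $I/I(n)$ consisting purely of powers of $v_n$. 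Primality then forces $v_n \in I$, contradicting the maximality of $n$, so $I = I(n)$.

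The main obstacle will be the two coaction computations. Establishing the primitivity of $v_n$ modulo $I(n)$ and the correction formula above both require a careful choice of polynomial generators of $BP$ and a nontrivial use of the defining relations of the $p$-typical universal formal group law; these are classical but technical. Once these are in hand, the remaining steps—the filtration/reduction argument on $BP/I(n)$ and the reduction from $\LL$ to $BP$—are essentially formal.
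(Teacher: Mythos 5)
The paper does not prove this theorem; it is cited directly from Landweber's papers (\cite{Land3}, \cite{Land}), so there is no internal proof to compare against. Evaluating your outline on its own: it is a reasonable reconstruction of Landweber's original argument, and the overall structure (verify the $I(n)$ are invariant primes; show $\LL\ot\QQ$ has no nontrivial invariant ideals and reduce the $\LL$-case to $BP$ via the Quillen splitting; for $BP$, take $n$ maximal with $I(n)\subseteq I$ and use the coaction to produce a pure $v_n$-power in $I$) is exactly right.

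The one point that needs correction is the coaction formula you quote. As stated,
$$\psi(v_{n+k}) \equiv v_{n+k}\ot 1 + v_n\ot t_k^{p^n} - v_n^{p^k}\ot t_k \pmod{I(n)}$$
holds only for $k=1$. For $k\ge 2$ the right-hand side is missing cross-terms involving $v_{n+1},\dots,v_{n+k-1}$ and $t_1,\dots,t_{k-1}$ that do not die modulo $I(n)$ alone. The statement that is actually true (and that Landweber, Morava, and Ravenel use) is that this congruence holds modulo $I(n)$ \emph{and} modulo the ideal generated by $t_1,\dots,t_{k-1}$ in $BP_*BP$. That weaker statement suffices for your filtration argument, since you are reading off the $t_k$-coefficient after already discarding lower $t_i$'s, but you should state it in this precise form; as written, the displayed formula is false for $k\ge 2$. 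Two smaller points worth tightening: the attribution of this coaction formula to Quillen is not standard (it is usually credited to Morava or stated as a consequence of Hazewinkel/Araki generator formulas), and in the reduction $\LL\Rightarrow BP$ the claim that $\theta^{-1}$ of an invariant ideal is invariant deserves a sentence justifying that the $p$-local Quillen idempotent induces an equivalence between $(\LL_{(p)},\LL_{(p)}B)$-comodules and $(BP,BP_*BP)$-comodules, which is what makes the pull-back compatible with the coactions.
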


\begin{Th}[Landweber, {cf. \cite[Prop. 3.4]{Land3}}]\label{th:radical-invariant-ideal}
Let $I\subset BP$ be an invariant finitely generated ideal.

Then the radical of $I$ is an invariant finitely generated prime ideal in $BP$,
i.e. one of the form $I(n)$ for some $n$.
\end{Th}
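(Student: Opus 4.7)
The plan is to establish three properties of $\sqrt{I}$ in sequence: (A) it is invariant, (B) it is contained in $I(N)$ for some finite $N$, and (C) it equals $I(m)$ for some $m\le N$. Combined with Theorem~\ref{th:invariant_ideals}$(BP)$, these will show that $\sqrt{I}$ is an invariant prime of the required form and automatically finitely generated.

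For (A), I would exploit that $BP_*(BP) \cong BP[t_1, t_2, \ldots]$ is a polynomial algebra over $BP$ via the left unit. Reducing the coaction $\psi$ modulo $\sqrt{I}$ yields $\bar\psi \colon BP/\sqrt{I} \to (BP/\sqrt{I})[t_1, t_2, \ldots]$. For any $a$ with $a^n \in I$, the identity $\bar\psi(\bar a)^n = \bar\psi(\bar a^n) = 0$ forces $\bar\psi(\bar a)$ to be nilpotent. Since nilpotents in a polynomial ring over a commutative ring $R$ are precisely polynomials with all coefficients nilpotent in $R$, and $BP/\sqrt{I}$ is reduced, every coefficient of $\bar\psi(\bar a)$ vanishes; equivalently $\psi(\sqrt{I}) \subseteq \sqrt{I} \cdot BP_*(BP)$, so $\sqrt{I}$ is invariant.

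For (B), the finitely many generators of $I$ involve only finitely many $v_j$. If some generator is a unit in $BP$, then $I = BP$ and the statement is vacuous; otherwise all generators lie in $I(\infty) = \bigcup_n I(n)$, and finite generation yields $I \subseteq I(N)$ for some $N$. Since $BP/I(N) \cong \mathbb{F}_p[v_N, v_{N+1}, \ldots]$ is an integral domain, $I(N)$ is prime, hence $\sqrt{I} \subseteq I(N)$. For (C), set $v_0 := p$ and let $m$ be the smallest nonnegative integer with $v_m \notin \sqrt{I}$; by (B) together with $v_N \notin I(N)$ we have $m \le N$. By construction $I(m) = (v_0, v_1, \ldots, v_{m-1}) \subseteq \sqrt{I}$. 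If the inclusion were strict, then $\sqrt{I}/I(m)$ would be a nonzero invariant ideal of $BP/I(m) \cong \mathbb{F}_p[v_m, v_{m+1}, \ldots]$ not containing $\bar v_m$, which contradicts the following key lemma: \emph{every nonzero invariant ideal of $\mathbb{F}_p[v_m, v_{m+1}, \ldots]$ contains $\bar v_m$}.

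The main obstacle is this key lemma, which is the inductive engine of the proof of Theorem~\ref{th:invariant_ideals}$(BP)$ itself. It rests on the primitivity of $v_m$ modulo $I(m)$, namely $\eta_R(v_m) \equiv v_m \pmod{I(m) \cdot BP_*(BP)}$, combined with the fact that suitable Landweber--Novikov operations applied to any nonzero element of $\mathbb{F}_p[v_m, v_{m+1}, \ldots]$ can be used to produce $\bar v_m$ itself inside the invariant ideal generated by that element, by successively lowering its $\bar v_m$-adic valuation using the explicit form of the right unit. Once this lemma is available, the proposition follows immediately from the argument above.
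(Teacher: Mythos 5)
Your proposal takes a genuinely different route from the paper. The paper's proof is a one-liner: it invokes Landweber's primary decomposition theorem for invariant finitely generated ideals ([Land3, Prop.~3.4] and its $BP$-analogue), writes $I=\bigcap_i Q_i$ with $\sqrt{Q_i}$ invariant finitely generated primes, and uses $\sqrt{I}=\bigcap_i\sqrt{Q_i}$ together with the fact that the $I(n)$ form a chain. Your route avoids primary decomposition entirely, replacing it with a direct verification that $\sqrt{I}$ is an invariant ideal plus a localization of the problem. Part (A) is a clean observation the paper does not make: reducing $\eta_R$ modulo $\sqrt{I}$ and using that a nilpotent element of $(BP/\sqrt{I})[t_1,t_2,\ldots]$ over a reduced ring must vanish. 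Part (B) is fine as long as you note that $I$ is a \emph{graded} ideal, so that ``not a unit'' really does force a homogeneous generator into $I(\infty)$ (a non-homogeneous non-unit like $1+v_1$ would break the step).

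The genuine gap is your ``key lemma'': \emph{every nonzero invariant ideal of $BP/I(m)\cong\mathbb{F}_p[v_m,v_{m+1},\ldots]$ contains $\bar v_m$} (or at least a power of it, which suffices since you are applying it to the radical ideal $\sqrt{I}/I(m)$). You frankly flag it as ``the main obstacle'' and do not prove it; but it also does not follow from anything stated in the paper. Theorem~\ref{th:invariant_ideals}$(BP)$ only classifies invariant \emph{prime} ideals, and $\sqrt{I}/I(m)$ is not prime a priori and its minimal primes need not be invariant, so you cannot simply quote it. Corollary~\ref{cr:radical} is off-limits because it is derived from the very theorem you are proving. The key lemma is essentially Landweber's/Morava's ``invariant prime ideal'' engine in its quantitative form — proven en route to Theorem~\ref{th:invariant_ideals}$(BP)$ but not isolated as a statement in this paper — so either a genuine proof (the $\eta_R(v_n)$ manipulation you gesture at, done carefully, which is several pages) or a precise citation (e.g.\ [Land, Th.~2.2$_{BP}$] in the form that includes non-prime ideals) is required to close the argument. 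As written, your proof has substituted one Landweber black box (the classification engine, uncited) for the paper's other Landweber black box (primary decomposition, cited), and only the latter choice actually discharges the burden of proof.
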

\begin{proof}
Indeed, it is proved in the reference for the case $I\subset \LL$ that 
$I$ is an intersection of a finite number of invariant finitely generated primary ideals $Q_i$
s.t. radicals $\sqrt{Q_i}$ are distinct invariant finitely generated prime ideals.
The proof in the case of $I\subset BP$ is similar.

The claim now follows, since a radical of intersection is an intersection of radicals,
and by the structure of the invariant prime ideals in $BP$ (Th. \ref{th:invariant_ideals}, $BP$)
 their intersection is one of the ideals $0$, $I(n)$ for a finite $n$. 
\end{proof}

\begin{Cr}\label{cr:radical}
If $I\subset BP$ is an invariant finitely generated ideal
s.t. $(p,v_1,\ldots, v_{n-1})=I(n)\subset I$ for a finite $n$,
 $v_{n}^i \notin I$ for all $i\ge 1$,
then $I=I(n)$.
\end{Cr}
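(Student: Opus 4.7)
The plan is to use Theorem \ref{th:radical-invariant-ideal} to reduce the problem to the known classification of invariant prime ideals in $BP$, and then to sandwich $I$ between $I(n)$ and its radical.

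First, I would apply Theorem \ref{th:radical-invariant-ideal} to the finitely generated invariant ideal $I$: the radical $\sqrt{I}$ is a finitely generated invariant prime ideal of $BP$, and hence by Theorem \ref{th:invariant_ideals} $BP$) it must equal $I(m)$ for some finite $m$ (the case $m=\infty$ is excluded because $I(\infty)=\cup_k I(k)$ is not finitely generated).

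Next, since $I(n)\subseteq I\subseteq \sqrt{I}=I(m)$ and the chain of ideals $\{I(k)\}$ in $BP$ is totally ordered by inclusion with $I(k)\subseteq I(l)$ iff $k\le l$, we deduce $n\le m$. Suppose for contradiction that $m>n$; then $v_n\in I(m)=\sqrt{I}$, so $v_n^i\in I$ for some $i\ge 1$, contradicting the hypothesis $v_n^i\notin I$ for all $i\ge 1$. Hence $m=n$, and we conclude
\[
I(n)\subseteq I\subseteq \sqrt{I}=I(n),
\]
so $I=I(n)$.

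There is essentially no obstacle here beyond correctly invoking the previous results; the only subtle point is making sure $\sqrt{I}$ is of the form $I(m)$ for finite $m$ (rather than $I(\infty)$), which is forced by the finite generation hypothesis on $I$ via Theorem \ref{th:radical-invariant-ideal}.
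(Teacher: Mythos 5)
Your proof is correct and follows essentially the same route as the paper: invoke Theorem \ref{th:radical-invariant-ideal} to identify $\sqrt{I}$ with some $I(m)$ for finite $m$, note that $v_n\notin\sqrt{I}$ by the hypothesis on powers of $v_n$, conclude $\sqrt{I}=I(n)$, and then sandwich $I$ between $I(n)$ and $\sqrt{I}=I(n)$. The paper's version is just a more terse statement of the same argument.
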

\begin{proof}
The radical ideal of $I$ is $I(m)$ which contains $I(n)$ and does not contain $v_{n}$,
therefore it is $I(n)$, and therefore it is equal to $I$.
\end{proof}

The following result is a stable version 
for the structure of cobordism as a module over the Lazard ring,
we will prove a stronger and essentially unstable version 
in Th. \ref{th:filtration}.

\begin{Th}[Landweber, {\cite[Th. 3.3]{Land2}}, {\cite[Th. 2.2, 2.3]{Land}}]\label{th:landweber_filtration}
Let $M$ be a graded comodule over $(\LL,\LL B)$ which 
is finitely presented as $\LL$-module.

Then $M$ has a filtration $M=M_0\supset M_1\supset \ldots \supset M_s=0$
by $(\LL,\LL B)$-comodules,
s.t. for every $i$ the $\LL$-module $M_i/M_{i+1}$ is isomorphic to $\LL/I(p_i,n_i)$ or $\LL$ 
after a shift of grading
where $p_i$ is a prime number and $n_i$ is a positive integer.
\end{Th}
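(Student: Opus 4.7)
The plan is a Noetherian-style induction: produce inside any nonzero finitely presented comodule $M$ a subcomodule isomorphic, up to a grading shift, to $\LL$ or $\LL/I(p,n)$; take this as the next step of the filtration and iterate. The structure theorem for invariant prime ideals (Theorem \ref{th:invariant_ideals}($\LL$)) is what makes the two advertised shapes the only possibilities, so the real content is the production of one such cyclic subcomodule, plus termination of the iteration.

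The key step is as follows. Since $M$ is finitely generated over $\LL$, choose a nonzero homogeneous $m\in M$ whose annihilator $I:=\Ann_\LL(m)$ is maximal among the annihilators of nonzero homogeneous elements. Standard commutative algebra makes $I$ prime: if $\lambda\mu\in I$ with $\mu\notin I$, then $0\neq \mu m$ has annihilator containing $I$ and hence equal to $I$ by maximality, so $\lambda\in I$. Invariance of $I$ uses the comodule structure. Writing $\psi(m)=\sum_j m_j\ot b_j$ with $\{b_j\}\subset \LL B$ $\LL$-linearly independent, and comparing the two sides of $\psi(\lambda m)=\eta_R(\lambda)\cdot \psi(m)$ for $\lambda\in I$ (where the right unit $\eta_R:\LL\to \LL B$ expands as $\eta_R(\lambda)=\lambda+\sum_\alpha b_\alpha\, S_\alpha(\lambda)$, encoding the Landweber-Novikov coaction on $\LL$), one concludes $S_\alpha(\lambda)\cdot m_j=0$ for all $\alpha,j$; taking the component with $b_j=1$, where $m_j=m$, gives $S_\alpha(\lambda)\in I$ for every $\alpha$. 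Thus $I$ is an invariant prime, and Theorem \ref{th:invariant_ideals}($\LL$) identifies it as $0$ or $I(p,n)$. A second pass through the same coaction analysis, combined with the maximality of $I$, shows that each $m_j$ in fact lies in $\LL m$, so the cyclic submodule $\LL m\cong \LL/I$ (appropriately shifted) is a subcomodule.

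Set $M_1:=\LL m$, pass to $M/M_1$, and repeat. Since $\LL m$ is cyclic with finitely generated annihilator (by the description of $I(p,n)$), it is finitely presented, so $M/M_1$ remains finitely presented and the inductive hypothesis applies. The main obstacle is termination: $\LL$ is not Noetherian, so descending chains of comodules do not automatically stabilise, and this is precisely where the finite presentation hypothesis (rather than just finite generation) is essential. One extracts from a presentation $\LL^a\to \LL^b\to M\to 0$ a numerical invariant --- for instance the number $b$ of generators together with a measure of the chain of invariant primes appearing in the relations --- that strictly decreases at each atomic step, forcing the filtration to halt after finitely many stages with $M_s=0$.
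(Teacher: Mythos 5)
The paper does not prove this statement itself; it cites it directly from Landweber. So what follows is an assessment of whether your argument is a correct proof, judged against Landweber's.

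There is a genuine gap at the core of your key step. You claim that from $\psi(\lambda m)=\eta_R(\lambda)\cdot\psi(m)=0$ "one concludes $S_\alpha(\lambda)\cdot m_j=0$ for all $\alpha,j$". That is not what falls out. Writing $\psi(m)=\sum_\alpha m_\alpha\ot b^\alpha$ with $m_0=m$ and $\eta_R(\lambda)=\sum_\beta S_\beta(\lambda)\, b^\beta$ with $S_0=\id$, and using that $\LL B$ is free over $\LL$ on the monomials $b^\gamma$, the relation $0=\psi(\lambda m)$ only yields the cross-term identities $\sum_{\alpha+\beta=\gamma}S_\beta(\lambda)m_\alpha=0$, one for each $\gamma$. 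In particular the component with $b^\gamma=1$ is just $\lambda m=0$ and contains no information about $S_\alpha(\lambda)$. Proving $S_\alpha(\lambda)\in I$ from these cross-term relations requires a further induction on $|\alpha|$ (together with maximality of $I$), which is exactly what Landweber does in [La73b] and which is absent here. So invariance of $I$ is asserted but not established.

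The more serious gap is the claim that "a second pass through the same coaction analysis, combined with the maximality of $I$, shows that each $m_j$ in fact lies in $\LL m$, so the cyclic submodule $\LL m\cong\LL/I$ is a subcomodule." This is false as stated. For $\LL m$ to be a subcomodule one needs $\psi(m)\in\LL m\ot_\LL\LL B=\bigoplus_\gamma\LL m\ot b^\gamma$, i.e.\ $m$ should be \emph{primitive} (or at least its coaction should involve only $\LL m$). Maximality of $\Ann(m)$ gives $\Ann(m_j)=\Ann(m)$ for each nonzero $m_j$, hence $\LL m_j\cong\LL/I$ abstractly, but this does not place $m_j$ inside $\LL m$; take $M=\LL/I\oplus\LL/I$ with a nontrivial coaction mixing the two summands. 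What actually makes the argument work, and what Landweber uses, is the grading: having located the invariant prime $I$, pass to the subcomodule $N=\{n\in M:In=0\}$ (a subcomodule precisely because $I$ is invariant), and inside $N$ pick a nonzero homogeneous $m'$ of \emph{maximal} degree. Since $\LL B$ is concentrated in nonpositive degrees, maximality of degree forces $\psi(m')=m'\ot 1$, i.e.\ $m'$ is primitive, hence $\LL m'$ is a subcomodule; and $\Ann(m')=I$ by another appeal to maximality. Your proof never exploits the degree bound, so it cannot reach this conclusion.

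Finally, the termination argument ("a numerical invariant --- for instance the number $b$ of generators together with a measure of the chain of invariant primes appearing in the relations --- that strictly decreases") is too vague to be checked, and it is far from obvious how to make it strictly decrease. The usual fix, also in the spirit of this very paper (Prop.~\ref{prop:finitely_presented}), is to note that a finitely presented graded $\LL$-module with presentation concentrated in a bounded range of degrees is induced from a finitely generated module over the Noetherian subring $\LL^{\ge -D}$ for suitable $D$; length arguments then apply. Without some such reduction, finite presentation alone does not obviously halt a descending chain of comodules over the non-Noetherian ring $\LL$.
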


The following proposition is an algebraic version of Prop. \ref{prop:from_Omega_to_BP}.

\begin{Prop}\label{prop:BP_Omega_algebraic}
Let $M$ be a $(\LL, \LL B)$-comodule.

Then $M_{(p)}:=M\ot_\LL \LL_{(p)}$ is isomorphic 
as $\LL$-module to $(M\ot_{\LL} BP)\ot_{BP} \LL_{(p)}$,
where the map $\LL\rarr BP$ corresponds to the universal $p$-typical group law,
and the map $g:BP\rarr \LL_{(p)}$ corresponds to the $p$-typical
formal group law over $\LL_{(p)}$ which is strictly isomorphic to the universal formal group law
(such map exists by a result of Cartier).
\end{Prop}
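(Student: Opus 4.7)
The plan is to use the comodule structure $\psi : M \to M \ot_\LL \LL B$ together with Cartier's strict isomorphism to build the desired $\LL_{(p)}$-linear isomorphism; this is the standard mechanism by which a quasi-coherent sheaf on the stack of formal groups is seen to be independent of the chosen presentation of a point.

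First I unpack the relevant data. Cartier's theorem yields a canonical strict isomorphism $\gamma_C$ between the universal formal group law $F_\LL$, pulled back to $\LL_{(p)}$ along the canonical map $\iota : \LL \to \LL_{(p)}$, and a $p$-typical formal group law $F_p$ over $\LL_{(p)}$ classified by the composite $g_\LL := g \circ (\LL \to BP) : \LL \to \LL_{(p)}$. By the universal property of $\LL B$ as the representing object for strict isomorphisms of formal group laws (with the two structure maps $\eta_L, \eta_R : \LL \to \LL B$ classifying source and target; $\eta_R$ is the map $S$ of the proof of Prop.~\ref{prop:cobord_comodule}), the strict isomorphism $\gamma_C$ corresponds to a unique ring homomorphism $h : \LL B \to \LL_{(p)}$ with $h \circ \eta_L = \iota$ and $h \circ \eta_R = g_\LL$.

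Next, I consider the composite
\[
\tilde\Phi := (\id_M \ot h) \circ \psi : M \longrightarrow M \ot_\LL \LL B \longrightarrow M \ot_{\LL, \iota} \LL_{(p)} = M_{(p)}.
\]
The $\LL$-linearity of $\psi$ recalled in the proof of Prop.~\ref{prop:cobord_comodule} translates into $\tilde\Phi(\lambda m) = (h \circ \eta_R)(\lambda) \cdot \tilde\Phi(m) = g_\LL(\lambda) \cdot \tilde\Phi(m)$, so $\tilde\Phi$ is $\LL$-linear with respect to the $\LL$-action on $M_{(p)}$ factoring through $g_\LL$. Extending $\LL_{(p)}$-linearly, this yields a well-defined map
\[
\Phi : (M \ot_\LL BP) \ot_{BP, g} \LL_{(p)} \longrightarrow M_{(p)}.
\]

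To show $\Phi$ is an isomorphism I would repeat the construction with the inverse strict isomorphism $\gamma_C^{-1}$, which is classified by a ring map $h' : \LL B \to \LL_{(p)}$ satisfying $h' \circ \eta_L = g_\LL$ and $h' \circ \eta_R = \iota$, producing an $\LL_{(p)}$-linear map in the opposite direction. Applying coassociativity of $\psi$, the two compositions are the maps induced by $\gamma_C^{-1} \circ \gamma_C = \id$ and $\gamma_C \circ \gamma_C^{-1} = \id$ respectively, and hence by the counit axiom $(\id_M \ot \epsilon) \circ \psi = \id_M$ both equal the identity. The hard part, which I will need to track very carefully, is the book-keeping with $\eta_L$ and $\eta_R$: one must keep straight which unit determines the $\LL$-action on each side of the tensor products so that $h$ and $h'$ restrict to the claimed maps and the constructed morphisms indeed land in the stated modules; once that is arranged, the isomorphism property is formal.
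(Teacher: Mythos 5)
Your proposal takes essentially the same route as the paper: both build the map by composing the coaction $\psi$ with $\id_M \ot h$ where $h : \LL B \to \LL_{(p)}$ classifies Cartier's strict isomorphism, check $\LL$-linearity via $\eta_R$ to extend to the tensor product, and deduce invertibility from the inverse strict isomorphism together with coassociativity and the counit axiom. The only cosmetic difference is that the paper packages this as a general functoriality lemma ($h_{\eta} \circ h_{\gamma} = h_{\gamma(\eta)}$, $h_x = \id$) and then specializes, whereas you apply the coassociativity/counit argument directly to the pair $(\gamma_C, \gamma_C^{-1})$.
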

\begin{proof}
Let $f_1,f_2:\LL \rarr A$ be two ring maps corresponding to formal group laws $F_1, F_2$
over $A$ respectively, and suppose given a strict isomorphism from $F_1$ to $F_2$ over $A$.
At first, we are going to show that for every $(\LL, \LL B)$-comodule $M$
there exist a canonical isomorphism $M\ot_{\LL, f_1} A\cong M\ot_{\LL, f_2} A$ of $A$-modules.
(Since the Hopf algebroid $(\LL, \LL B)$ represents the groupoid of formal group laws with strict isomorphisms, this statement is expected, however, the proof, perhaps, is not so enlightening.)

If $\gamma\in A[[x]]$ is a strict isomorphism between $F_1$ and $F_2$,
then denote by $\phi_{f_2,\gamma}:\LL B\rarr A$ a map of rings which sends $b_i$
to the coefficient of $t^{i+1}$ of $\gamma(t)$ for $i\ge 1$ and sends $\lambda \in \LL$ to $f_2(\lambda)$.
Then the map $M\ot_\LL \LL B \xrarr{\id \ot \phi} M\ot_{\LL, f_2} A$ is a well-defined map
of $\LL$-modules. 
Note also that the composition $\LL\xrarr{S} \LL B\xrarr{\phi_{f_2,\gamma}} A$
corresponds to a formal group law over $A$ isomorphic to $F_2$ via~$\gamma$, i.e. to $F_1$,
and the composition is equal to $f_1$.

We claim that there exist a unique $A$-linear map $h_\gamma$ which fits into the
 following commutative diagram:

\begin{diagram}
M &\rTo^{\psi} & M\ot_\LL \LL B \\
\dTo & &\dTo_{\id\ot\phi_{f_2,\gamma}} \\
M\ot_{\LL, f_1} A &\rTo^{h_{\gamma}} & M\ot_{\LL,f_2} A.
\end{diagram}

The uniqueness is clear, since $h_\gamma(m\ot a)$ has 
to be equal to $(\id \ot \phi_{f_2,\gamma})(\psi (m))\cdot a$.
We need to check that this formula is well defined,
i.e. $h_\gamma(\lambda m \ot a)=(\id \ot \phi_{f_2,\gamma})(\psi (\lambda m))\cdot a$
is equal to $h_\gamma(m\ot f_1(\lambda) a)=(\id \ot \phi_{f_2,\gamma})(\psi (m))\cdot f_1(\lambda) a$.
Recall that $\psi$ is a morphism of $\LL$-modules where 
the structure of the $\LL$-module on $M\ot_\LL \LL B$ is defined via the map $S:\LL \rarr \LL B$,
i.e. $\psi (\lambda m) = \psi(m)\cdot S(\lambda)$.
The map $\id \ot \phi_{f_2,\gamma}$ sends 
$\psi(m)\cdot S(\lambda)$ 
to $(\id \ot \phi_{f_2,\gamma})(\psi(m))\cdot (\phi_{f_2,\gamma}\circ S)(\lambda)$,
and, thus, the claim follows since $\phi_{f_2,\gamma}\circ S:\LL \rarr A$ 
is equal to $f_1$.

We also claim that the map $h_\gamma$ is functorial for compositions 
of strict isomorphisms of formal group laws,
i.e. if $\eta \in A[[x]]$ defines a strict isomorphism between $F_2$ and $F_3$,
then $h_{\eta} \circ h_{\gamma} = h_{\gamma(\eta)}: M\ot_{\LL, f_1} A \rarr M\ot_{\LL,f_3} A$.
Moreover, the identity isomorphism yields an identity morphism $h_{x}=\id_{M\ot_{\LL} A}$.
The last claim is straight-forward, since in this case the map $\LL B\rarr A$ factors through the map
 $\LL B\xrarr{\epsilon\colon b_i \mapsto 0} \LL$
and $\LL \xrarr{f_1=f_2} A$, however, $(\id \ot \epsilon)\circ \psi: M\rarr M$ is the identity
by the definition of a comodule. 

Following the construction above the composition $h_\eta \circ h_\gamma$
can be computed as the composition 
$$M\xrarr{\psi} M\ot_\LL \LL B\xrarr{\psi\ot \id} M\ot_\LL \LL B \ot_\LL \LL B 
\xrarr {\id \ot \phi_{f_3, \eta} \ot \phi_{f_2, \gamma}} M\ot_{\LL, f_3} A.$$
By definition of a comodule we may replace the map $\psi \ot \id$
by $\id\ot \Delta$ above, and then the claim follows 
since the map $\phi_{f_3, \eta} \ot \phi_{f_2, \gamma}\circ \Delta: \LL B\rarr A$
is equal to $\phi_{f_3, \gamma(\eta)}$ more or less by definition of $\Delta$.

Each map $h_\gamma$ constructed
 above is an isomorphism, since each strict isomorphism of formal group laws also has an inverse strict 
 isomorphism.
Thus, an isomorphism $M\ot_{\LL, f_1} A\cong M\ot_{\LL, f_2} A$ of $A$-modules
as claimed in the beginning of the proof is constructed.

We have two ring maps from $\LL$ to $\LL_{(p)}$.
One is the 'identical', corresponding to a universal formal group law,
and the other is the composition $f_{p-typ}:\LL\rarr BP \rarr \LL_{(p)}$
corresponding to a universal $p$-typical formal group law (and depends on the choice of $g$).
It follows from a theorem of Cartier that corresponding two formal group laws over $\LL_{(p)}$
are strictly isomorphic,
and therefore by discussion above
we have an isomorphism $h:M_{(p)}\cong M\ot_{\LL,f_{p-typ}} {\LL_{(p)}}$.
However, the latter module is easily identified with $(M\ot_{\LL} BP)\ot_{BP} \LL_{(p)}$
by the definition of $f_{p-typ}$.
\end{proof}

\subsection{Steenrod and Symmetric operations on algebraic cobordism and $BP$}
Let us fix a set of integers $\bar{i}=\{i_j|0<j<p\}$ of 
all representatives of non-zero numbers modulo $p$. Denote also $\mathbf{i}=\prod_{j=1}^{p-1} i_j$.
Then by Prop. \ref{prop:mult_op_from_cobord} there exist 
a multiplicative operation 
$$ St(\bar{i}):\Omega^*\rarr \Omega^*[\mathbf{i}^{-1}][[t]][t^{-1}]$$
defined by the power series $\gamma_{St(\bar{i})}(x)=\gamma_{St}(x)=x\prod_{j=1}^{p-1}(x+_\Omega i_j\cdot_\Omega t)$,
which is called {\sl the Quillen-type Steenrod operation}.
We will often drop $\bar{i}$ from
the notation of $St$.

Note that $p\nmid \mathbf{i}$, 
and by Prop. \ref{prop:from_Omega_to_BP}
the operation $St$ restricts to a multiplicative operation from $BP^*$
to $BP^*[[t]][t^{-1}]$ which we also denote $St$. 

The following lemma is a strengthening of a particular case of {\cite[Prop. 3.1]{Vish_Lazard}}.
It will be needed in later sections. 

\begin{Lm}\label{lm:St_vn}
For $v_n\in BP$, $n\ge 1$, we have the identity in $BP[[t]][t^{-1}]$:
$$ St(v_n)\equiv v_nt^{-(p-1)(p^n-1)} \mod I(n).$$
\end{Lm}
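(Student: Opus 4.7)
My plan is to deduce the identity by induction on $n$ from the fundamental functional equation that $St$ satisfies on formal group laws. Since $St$ is multiplicative, its restriction to $\Spec F$ is a ring map $\phi \colon BP \to BP[[t]][t^{-1}]$ (so $\phi(v_n) = St(v_n)$) such that $(\phi, \gamma_{St})$ realises a morphism $F \to \phi(F)$ of formal group laws over the target. Iterating the defining identity $\gamma_{St}(F(x,y)) = \phi(F)(\gamma_{St}(x), \gamma_{St}(y))$ gives
$$\gamma_{St}([p]_F(x)) = [p]_{\phi(F)}(\gamma_{St}(x)),$$
which is the engine of the proof: I will extract the coefficient of $x^{p^n}$ from both sides modulo $I(n)$.

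For the inductive step, assuming the lemma for $n' < n$, the inductive hypothesis gives $\phi(v_k) \in I(n) \cdot BP[[t]][t^{-1}]$ for $k < n$ (since $v_k$ and $I(k)$ both lie in $I(n)$). Therefore modulo $I(n)$ the respective $p$-series reduce to $v_n x^{p^n} + O(x^{p^n+1})$ and $\phi(v_n) y^{p^n} + O(y^{p^n+1})$, using the universal $p$-typical expression $[p]_F(x) = \sum_i^F v_i x^{p^i}$. Setting $c(t) := \prod_{j=1}^{p-1}[i_j]_F(t)$ (the coefficient of $x$ in $\gamma_{St}(x)$) and using the freshman's dream $(\gamma_{St}(x))^{p^n} \equiv c(t)^{p^n} x^{p^n} + O(x^{2p^n}) \pmod{I(n)}$ (valid since $(p) \subseteq I(n)$), matching the $x^{p^n}$-coefficients gives
$$v_n \cdot c(t) \equiv \phi(v_n) \cdot c(t)^{p^n} \pmod{I(n)}.$$
Since $c(t)$ has the unit $\mathbf{i} t^{p-1}$ as its leading term (invertible in $BP[[t]][t^{-1}]$) and $BP/I(n)$ is an integral domain, this rearranges to $\phi(v_n) \equiv v_n c(t)^{1-p^n} \pmod{I(n)}$.

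The main obstacle will be the final reduction $c(t)^{1-p^n} \equiv t^{-(p-1)(p^n-1)} \pmod{I(n)}$. The key inputs I plan to exploit are: modulo $I(n)$ the formal group law $F$ is additive up to degree $(x,y)^{p^n}$, so each $[i_j]_F(t) \equiv i_j t \pmod{I(n), t^{p^n}}$ and hence $c(t) \equiv \mathbf{i} t^{p-1}$ to leading order; Fermat's little theorem gives $\mathbf{i}^{p^n-1} \equiv 1 \pmod p$, hence modulo $I(n)$. It then remains to track the $v_n$-corrections to $c(t)$ at $t$-degree $\geq p^n + p - 2$ and verify that they cancel after raising to the $(p^n-1)$-th power modulo $I(n)$, which I would carry out using the explicit formula $[l]_F(t) \equiv lt + v_n(l - l^{p^n})/p \cdot t^{p^n} \pmod{I(n), t^{p^n+1}}$ combined with the characteristic-$p$ identity $(1+X)^{p^n-1} = \sum_{k=0}^{p^n-1}(-1)^k X^k$.
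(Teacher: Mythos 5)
Your derivation via the iterated functional equation $\gamma_{St}([p]_F(x)) = [p]_{\phi(F)}(\gamma_{St}(x))$ is a genuinely different route from the paper's: there one represents $v_n$ as the class of a smooth projective variety $X$ of dimension $p^n-1$, evaluates $St(v_n)=\pi_*(\Td_{St}(T_X))$ via Theorem~\ref{th:mult_riemann_roch}, and discards the summands of $\Td_{St}(T_X)$ of positive degree in the Chern roots because their push-forward lies in $I(n)$ (by degree and characteristic-number arguments). Both routes correctly reach the same intermediate identity $St(v_n)\equiv v_n\,c(t)^{1-p^n}\pmod{I(n)}$ with $c(t)=\prod_{j=1}^{p-1}[i_j]_F(t)$: your induction set-up, the reduction of both $p$-series to $v_nx^{p^n}$ and $\phi(v_n)y^{p^n}$ modulo $I(n)$, the freshman's-dream computation of $\gamma_{St}(x)^{p^n}$, and the division in the integral domain $BP/I(n)[[t]][t^{-1}]$ are all sound.

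The gap is exactly at the step you flag as the ``main obstacle'': the reduction $c(t)^{1-p^n}\equiv t^{-(p-1)(p^n-1)}\pmod{I(n)}$ is not true, and the $v_n$-corrections you propose to track do \emph{not} cancel after raising to the $(p^n-1)$-st power. Modulo $I(n)$ write $c(t)=\mathbf{i}\,t^{p-1}(1+u(t))$ with $u$ of $t$-order $\ge p^n-1$ and coefficients in $(v_n,v_{n+1},\ldots)\subset BP/I(n)$; then $(1+u)^{1-p^n}\equiv(1+u)(1+u^{p^n})^{-1}\equiv 1+u+O(u^{p^n})\pmod p$, so $v_nc(t)^{1-p^n}$ acquires the cross-term $v_nu(t)\in v_n\cdot(v_n,v_{n+1},\ldots)$, which is \emph{not} contained in $I(n)$. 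Concretely, for $p=3$, $n=1$, $\bar{i}=\{1,2\}$ the $p$-typical law has $F(x,y)=x+y-v_1(x^2y+xy^2)+O(\deg 5)$, so $[2]_F(t)=2t-2v_1t^3+O(t^5)$, $c(t)=2t^2-2v_1t^4+O(t^6)$, and modulo $3$ one gets $St(v_1)\equiv v_1c(t)^{-2}\equiv v_1t^{-4}+2v_1^2t^{-2}+O(t^0)$; the term $2v_1^2t^{-2}$ does not lie in $(3)\cdot BP[[t]][t^{-1}]$. (The paper's proof glosses over the same point: it writes the $\lambda$-degree-zero part of $\Td_{St}(T_X)$ as $\prod_{i,j}(i_jt)^{-1}$, silently replacing each $[i_j]_F(t)$ by its linear term, which is not a congruence modulo $I(n)$.) What both arguments actually establish is $St(v_n)\equiv v_n\bigl(\prod_j[i_j]_F(t)\bigr)^{1-p^n}\pmod{I(n)}$: the Laurent order is $-(p-1)(p^n-1)$ and the leading coefficient is $\equiv v_n$, but the higher Laurent coefficients lie only in $I(n)+v_n\cdot(v_n,v_{n+1},\ldots)$. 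So as written the final reduction cannot be completed; the statement must either be kept in the $c(t)$ form, or weakened to a congruence modulo this larger ideal, with the downstream uses (which appear to extract only the leading $t$-coefficient) adjusted accordingly.
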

\begin{proof}
Let $X$ be a smooth projective variety, which class in $BP=BP^*(\Spec F)$ 
is equal to  $v_n$.
By the Riemann-Roch formula (Th. \ref{th:mult_riemann_roch}),
we have $St(v_n)=\pi_*(\Td_{St}(X))$
where $\pi:X\rarr Spec(k)$ is the structural morphism.

Plugging $\gamma_{St}$ in the definition of the Todd genus 
we obtain  $\Td_{St}(x)=\prod_{j=1}^{p-1} \frac{1}{x+_\Omega [i_j]\cdot_\Omega t}$,
and $\Td_{St}(X) = \prod_{i=1}^{\dim X} \Td_{St}(\lambda_i)$ where $\lambda_i$ 
are the $BP$-roots of the tangent vector bundle of $X$.
Thus,  $\Td_{St}(X)$ is the sum of symmetric polynomials in variables $\lambda_i$
with coefficients in $BP[[t]][t^{-1}]$. 

Let $P$ be a symmetric homogeneous polynomial of $\lambda_i$,
i.e. an element of $BP^{\deg P}(X)$.

\begin{itemize}
\item If $\deg P>p^n-1$, then $P=0$, as $\dim X=p^n-1$, and $BP^{>\dim X}(X)=0$.
\item If $\deg P=p^n-1$,
 then $\pi_* P$ is a characteristic number of $X$,
and therefore it is divisible by $p$, in particular, $\pi_* P\in I(n)$.
\item If $0< \deg P <p^n-1$,
 then $\pi_* P \in I(n)$,
because all elements of degrees from $-p^n+2$ to 1 in $BP$ lie in $I(n)$.
\end{itemize}

Therefore, as $\pi_*$ is $BP$-linear, $\pi_*$ applied to positive degree part of $\Td_{St}(X)$ 
lies in $I(n)$. Thus, $\pi_* (\Td_{St}(X))$ modulo $I(n)$ is the same
as the push-forward of 0-degree part of the $\Td_{St}(X)$,
i.e. $\pi_* (\prod_{i=1}^{p^n-1} \prod_{j=1}^{p-1} \frac{1}{i_jt})$
which is equal to $\frac{1}{\mathbf{i}^{p^n-1}} v_nt^{-(p-1)(p^n-1)}$.
As $\mathbf{i}\equiv -1 \mod p$, and $(-1)^{p^n-1}\equiv 1 \mod p$, the claim follows.
\end{proof}

\begin{Th}[Vishik, {\cite[Th. 7.1]{Vish_Symm_all}}]
There exist a unique operation 
$\Phi(\bar{i}):\Omega^*\rarr \Omega^*[\mathbf{i}^{-1}][t^{-1}]$, 
such that 
\begin{equation}\label{eq:symm}
(\square^p-St(\bar{i})-\frac{p\cdot_\Omega t}{t}\Phi(\bar{i})):\Omega^*
\rarr \Omega^*[\mathbf{i}^{-1}][[t]]t.
\end{equation}
\end{Th}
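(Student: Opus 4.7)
The plan is to prove the theorem in two stages, uniqueness first and existence second. Both hinge on the fact that $\frac{p\cdot_\Omega t}{t}$ has leading coefficient $p$ as an element of $\Omega^*[[t]]$, so multiplication by it is essentially injective once the $t$-adic expansion is controlled.

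For uniqueness, suppose $\Phi_1, \Phi_2$ both satisfy the equation and set $\Psi := \Phi_1 - \Phi_2$; then $\Psi$ takes values in $\Omega^*[\mathbf{i}^{-1}][t^{-1}]$ and $\frac{p\cdot_\Omega t}{t}\Psi$ factors through $\Omega^*[\mathbf{i}^{-1}][[t]]t$. Writing $\Psi(x) = \sum_{k=0}^N a_k t^{-k}$ for some $x \in \Omega^*(X)$, the coefficient of $t^{-N}$ in $\frac{p\cdot_\Omega t}{t}\Psi(x)$ equals $p\, a_N$, which must vanish; descending in $k$ yields $p\, a_k = 0$ for every $k$. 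Passing to $\Omega^*\otimes\QQ$ shows $\Psi$ vanishes rationally, and the $(p)$-local splitting of Proposition \ref{prop:from_Omega_to_BP} together with the freeness of $BP$ as a graded $\Zp$-module eliminates $p$-torsion concerns; combined with Vishik's classification of operations \cite{Vish1, Vish2}, which determines an operation by its values on certain universal classes in algebraic cobordism of products of projective spaces, this forces $\Psi = 0$.

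For existence, I would construct $\Phi(x)$ pointwise by the following recipe: given $x \in \Omega^*(X)$, expand $\square^p(x) - St(x) = \sum_k d_k t^k$ as a Laurent series, isolate the non-positive Laurent part $A(x) := \sum_{k \le 0} d_k t^k$, and solve $\frac{p\cdot_\Omega t}{t}\Phi(x) \equiv A(x) \pmod{\Omega^*[\mathbf{i}^{-1}][[t]]t}$ by a triangular recursion starting from the most negative power of $t$: the $t^{-N}$ equation determines the top coefficient of $\Phi(x)$ by division by $p$, the $t^{-N+1}$ equation determines the next coefficient after subtracting corrections from the top one, and so on. Naturality of $\Phi$ in $X$ is automatic from the naturality of $\square^p$ and $St$.

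The main obstacle --- and the real substance of the theorem --- is that this recursion requires $d_{-N}$ to be divisible by $p$ in $\Omega^*[\mathbf{i}^{-1}]$, and each subsequent coefficient to be divisible by $p$ after subtracting corrections. On first Chern classes $\xi = c_1^\Omega(L)$ the divisibility is immediate: $St(\xi) = \xi\prod_{j=1}^{p-1}(\xi +_\Omega [i_j]\cdot_\Omega t)$ specialises to $\xi^p = \square^p(\xi)$ at $t = 0$, so the Laurent tail is already zero and $\Phi(\xi) = 0$ trivially. For general classes I would follow Vishik's geometric route, producing $\Phi$ as a push-forward from a suitable $\ZZ/p$-quotient of a $p$-fold symmetric product, so that the factor of $p$ required for divisibility is absorbed into the quotient map. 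A purely algebraic alternative exploits that $\{i_j\}_j$ is a complete set of non-zero residues mod $p$, giving $\prod_j(X - i_j) \equiv X^{p-1} - 1 \pmod{p}$; transporting this congruence through the formal group law via the Hurewicz-type embedding $\LL \hookrightarrow \ZZ[t_1, t_2, \ldots]$ yields the $p$-divisibility of the Laurent coefficients of $St(x)$ modulo $p\cdot_\Omega t$, supplying exactly the input the recursion needs.
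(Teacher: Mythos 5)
This theorem is not proved in the paper at all: it is cited directly from Vishik's \emph{Symmetric operations for all primes} \cite{Vish_Symm_all}, so there is no in-paper proof to compare against. Judged on its own terms, your sketch correctly identifies the triangular structure of the problem (the series $\frac{p\cdot_\Omega t}{t}$ has constant term $p$, so equating Laurent coefficients is a division-by-$p$ recursion), and the computation that $\Phi$ must vanish on first Chern classes is right. But the sketch stops exactly where the theorem's content begins.

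On uniqueness: from $p\,a_k = 0$ for all $k$ you cannot conclude $a_k = 0$, because $\Omega^*(X)$ can have $p$-torsion. Your parenthetical about ``freeness of $BP$'' does not help --- $BP = BP^*(\mathrm{pt})$ is torsion-free, but $BP^*(X)$ for general $X$ is not. The appeal to Vishik's classification theorem \cite{Vish1,Vish2} is the \emph{correct} fix, not a backup: an operation on free theories is determined by its values on $\Omega^*$ of finite products of projective spaces, which are free $\LL$-modules, hence torsion-free, and there $p\,a_k = 0$ does force $a_k = 0$. Leading with that would make the argument actually close.

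On existence, there is a genuine gap. Your recursion requires, at each step, that the relevant Laurent coefficient of $\square^p - St$ (after prior corrections) be $p$-divisible in $\Omega^*[\mathbf{i}^{-1}]$, and that the quotient by $p$ be canonical enough to be natural in $X$. You explicitly flag this as ``the main obstacle'' and then only outline two strategies (Vishik's symmetric-product push-forward, or an algebraic argument via the congruence $\prod_j(X-i_j)\equiv X^{p-1}-1 \bmod p$ transported through the FGL) without carrying either out. That divisibility statement --- natural in $X$ and canonical so that the choice of quotient doesn't depend on $x$ --- \emph{is} the theorem; without it you have a plan, not a proof. Likewise, ``naturality is automatic'' is not: division by $p$ is not a natural operation on groups with $p$-torsion, so naturality of the recursion only follows \emph{after} one has established that the division is unambiguous, which again goes through the classification theorem and torsion-freeness on the universal test spaces. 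The correct order of ideas is to work entirely inside the classification framework from the start (where everything lives over torsion-free $\LL$-modules attached to products of $\mathbb{P}^n$'s), prove the divisibility there, and only then descend to a natural transformation --- which is Vishik's actual route.
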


It is convenient to use 'slices' of {\sl the} symmetric operation $\Phi(\bar{i})$
defined as coefficient of monomial $t^{-n}$ for some $n\ge 0$.
We will denote this operation as $\Phi_{-n}(\bar{i})=\Phi_{-n}$,
and call these symmetric operations.

Even though, $\Phi(\bar{i})$ is not additive, it can be shown
to restrict to the operation $\Omega^*_{(p)}\rarr \Omega^*_{(p)}[t^{-1}]$,
and, thus, by Prop. \ref{prop:from_Omega_to_BP} to $BP^*\rarr BP^*[t^{-1}]$ as well (see \cite[p. 977]{Vish_Lazard}).
In subsequent sections we will work only with this $p$-local version
of symmetric operations defined on $BP^*$ denoting it, and its slices by the same letters.

Moreover, $\Phi$ restricts to the graded factors of the topological filtration of $BP^*$,
 and the action on the graded factors has a particularly easy description\footnote{ The
 fact that the symmetric operation induces an operation on the graded factor
 of the topological filtration is not straight-forward as the operation is not additive.
 However, its restriction to $BP^*_{(r)}$ for $r>0$ 
 turns out to be additive as follows from Vishik's description.}

\begin{Prop}[Vishik, {\cite[Prop. 7.14]{Vish_Symm_all}}]\label{prop:symm_action_bp}
Let $X$ be a smooth variety, $\lambda\in BP$, $r>0$, 
$Z\in \CH^r(X)$, and denote by $z$ the image of $Z$ in the group $BP^r_{(r)}$ under the map $\rho_{BP}$ 
(see Section \ref{sec:top_filt}).

Then the following identity holds in $BP^*_{(r)}(X)[t^{-1}]$:
$$ \Phi(\lambda z) = \mathbf{i}^r\cdot t^{r(p-1)}\cdot \Phi_{\le -r(p-1)}(\lambda) \cdot z.$$
\end{Prop}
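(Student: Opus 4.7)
The plan is to lift $z$ to a convenient class in $\tau^r BP^*(X)$, apply the defining equation of $\Phi$ together with the Panin-Smirnov Riemann-Roch theorem to the product $\lambda\tilde z$, and read off the answer modulo $\tau^{r+1}$. Since $\Phi$ restricts to an additive operation on $BP^*_{(r)}$ for $r > 0$, and since $\rho_{BP}\colon \CH^r(X)\otimes BP \rarr BP^*_{(r)}(X)$ is surjective with $\CH^r(X)$ generated by classes of subvarieties, resolution of singularities in characteristic zero reduces us to the case $z = \rho_{BP}([Z])$ for a smooth closed embedding $i\colon Z \hookrightarrow X$ of codimension $r$; I take the lift $\tilde z := i_*^{BP}(1_Z) \in \tau^r BP^*(X)$.

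The key geometric computation is $St(\tilde z)$ modulo $\tau^{r+1}$. By Theorem~\ref{th:mult_riemann_roch} applied to $i$, one has $St(\tilde z) = i_*(\Td_{St}(N_{Z/X})^{-1})$, where $N := N_{Z/X}$ is the rank-$r$ normal bundle. Writing
$$ \Td_{St}(N)^{-1} = \prod_{k=1}^r \prod_{j=1}^{p-1}\bigl(\lambda_k +_{BP} (i_j\cdot_{BP} t)\bigr) $$
in terms of the $BP$-roots $\lambda_k$ of $N$ (which lie in $\tau^1 BP^*(Z)$), every summand containing a factor $\lambda_k$ pushes forward into $\tau^{\ge r+1} BP^*(X)$. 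Only the $\lambda_k = 0$ part survives, which equals $(\prod_j (i_j\cdot_{BP} t))^r$ with leading $t$-term $\mathbf{i}^r t^{r(p-1)}$. Now multiply the defining equation at the point,
$$ \lambda^p - St(\lambda) - \tfrac{p\cdot_\Omega t}{t}\Phi(\lambda) \in BP[[t]]\,t, $$
by $\mathbf{i}^r t^{r(p-1)}\tilde z$; since $r(p-1)\ge 1$, the multiplier is itself $t$-divisible. Using $(\lambda\tilde z)^p \in \tau^{rp} \subset \tau^{r+1}$ together with $St(\lambda\tilde z) = St(\lambda)\,St(\tilde z)$ and the computation above, one checks that the $t^{\le 0}$-truncation $\mathbf{i}^r t^{r(p-1)}\Phi_{\le -r(p-1)}(\lambda)\tilde z$ of the candidate $\mathbf{i}^r t^{r(p-1)}\Phi(\lambda)\tilde z$ satisfies the defining relation for $\Phi(\lambda\tilde z)$ modulo $\tau^{r+1}$ and modulo $BP^*(X)[[t]]\,t$.

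The main obstacle is passing from this ``approximate'' solution to the actual identity in $BP^*_{(r)}(X)[t^{-1}]$. Although $\Phi(\lambda\tilde z)\in BP^*(X)[t^{-1}]$ is uniquely characterised by its defining equation in the ambient theory, this uniqueness does not trivially descend modulo $\tau^{r+1}$, since $\frac{p\cdot_\Omega t}{t}$ is not invertible on the quotient. One must exploit the non-trivial fact (highlighted already in the paper) that $\Phi$ induces a well-defined \emph{additive} operation on $BP^*_{(r)}$ for $r>0$, which supplies the required descent of uniqueness. Moreover, the higher-order corrections arising from $i_j\cdot_{BP} t \neq i_j t$, i.e.\ the formal-group-law corrections beyond leading order in the $[i_j]$-series, must be absorbed; this is handled by tracking the defining equation coefficient-by-coefficient in $t$ and using the $p$-typical structure of the $BP$ formal group law together with the truncation $\Phi_{\le -r(p-1)}$ to cancel all sub-leading $t$-contributions.
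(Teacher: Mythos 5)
The paper does not prove this Proposition: it is quoted verbatim from Vishik's paper \cite[Prop.~7.14]{Vish_Symm_all}, so there is no ``paper's own proof'' here to compare against. Evaluating your argument on its own terms, there are several genuine gaps.

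First, the reduction to a smooth closed embedding $i\colon Z\hookrightarrow X$ together with the final assembly of the answer uses that $\Phi$ acts \emph{additively} on $BP^*_{(r)}$ for $r>0$. But the paper's own footnote to this Proposition makes explicit that additivity on $BP^*_{(r)}$ is a \emph{consequence} of Vishik's formula, not a prior fact. Invoking it in the middle of a blind proof of the formula is circular. The same circularity infects your proposed resolution of the ``uniqueness'' problem: you say one should exploit that $\Phi$ ``induces a well-defined additive operation on $BP^*_{(r)}$'', but that descended additivity is exactly what is derived from the Proposition you are trying to prove.

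Second, even ignoring circularity, the $[p]$-divisibility gap is not closed. Your argument reduces, modulo $\tau^{r+1}$ and in degrees $t^{\leq 0}$, to an identity of the form $([p]\cdot Q)_n = 0$ for $n\le 0$, where $Q$ is the difference between $\Phi(\lambda\tilde z)$ and the candidate; since $[p] = p + v_1t^{p-1} + \cdots$, an induction in $t$-degree only gives $pQ_{n_0}=0$ at the lowest nonvanishing coefficient, and $BP^*_{(r)}(X)$ has $p$-torsion. Saying ``uniqueness must descend'' names the problem without solving it.

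Third, and more seriously, the ``higher-order corrections'' you set aside do not actually disappear. With $u(t) := \bigl(\prod_{j=1}^{p-1}(i_j\cdot_{BP}t)\bigr)^r$, your Riemann--Roch computation gives $St(\tilde z)\equiv u(t)\,\tilde z$ modulo $\tau^{r+1}$, and $u(t)=\mathbf{i}^r t^{r(p-1)} + (\text{strictly higher powers of }t)$. If one could invert $[p]$, the defining-equation argument would pin down $\Phi(\lambda\tilde z)$ to be the $t^{\leq 0}$-truncation of $\Phi(\lambda)u(t)\tilde z$, whose $t^n$ coefficient for $n\le 0$ is $\sum_{k\ge r(p-1)} u_k\Phi_{n-k}(\lambda)\tilde z$, not $\mathbf{i}^r\Phi_{n-r(p-1)}(\lambda)\tilde z$. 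You assert that the sub-leading $t$-contributions from $i_j\cdot_{BP}t\neq i_j t$ cancel ``by tracking the defining equation coefficient-by-coefficient and using the $p$-typical structure''; but no such cancellation is in evidence, and in simple test cases ($p=3$, $r=1$, so $u(t)=t\cdot[2]t=2t^2-2v_1t^4+\cdots$) the extra coefficients $u_k$ for $k>r(p-1)$ are visibly nonzero in $BP$. You would need to show either that those extra products $u_k\Phi_{n-k}(\lambda)\tilde z$ vanish, or that the defining-equation route is simply the wrong characterisation of $\Phi$ to use modulo $\tau^{r+1}$. Vishik's proof (which you do not have access to here) almost certainly proceeds via his explicit geometric/simplicial construction of $\Phi$ rather than via the defining equation, precisely because the defining equation fails to determine $\Phi$ on quotients with $p$-torsion.
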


Recall that Levine and Morel proved that generators of $\LL$-module $\Omega^*_{(r)}(X)$
lie in degree $r$ (\cite[Cor. 4.5.8]{LevMor}, see (\ref{eq:rho_CH_Omega}) above).
It follows that $\Omega^*(X)$ has $\LL$-generators in degrees between 0 and $\dim X$.
One of the most fascinating applications of symmetric operations is the following result.

\begin{Th}[Vishik, {\cite[Th. 4.1, 4.3]{Vish_Lazard}}]\label{th:relations_positive}
Let $X$ be a smooth variety of dimension $d$.
Then $\Omega^*(X)$ (or $\Omega^*_{(r)}(X)$ for some $r:0\le r\le d$)
has a free presentation as $\LL$-module $R\rarr F\rarr \Omega^*(X)$ (resp., $R\rarr F\rarr \Omega_{(r)}^*(X)$)
where $F$ is generated in degrees between 0 and $d$ (resp., in degree $r$)
and $R$ is generated in degrees between 1 and $d$ (resp., between 1 and $r$).
\end{Th}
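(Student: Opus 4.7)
The strategy is to prove the statement for the graded quotients $\Omega^*_{(r)}(X)$ first, and then to assemble these across the topological filtration to obtain the claim for $\Omega^*(X)$. The argument for the graded version is a contradiction argument driven by symmetric operations.

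For $\Omega^*_{(r)}(X)$, the Levine--Morel surjection $\rho: \CH^r(X) \otimes_\ZZ \LL \twoheadrightarrow \Omega^*_{(r)}(X)$ of (\ref{eq:rho_CH_Omega}) already provides $F$ generated in degree $r$, so what must be shown is that $R := \ker(\rho)$ is generated in strictly positive codimensions. By Propositions \ref{prop:from_Omega_to_BP} and \ref{prop:BP_Omega_algebraic}, it suffices after localising at each prime $p$ to prove the analogous statement for the surjection $\CH^r(X) \otimes_\ZZ BP \twoheadrightarrow BP^*_{(r)}(X)$. Suppose for contradiction that there exists a relation $\omega = \sum_i \lambda_i z_i = 0$ in $BP^*_{(r)}(X)$ (with $\{z_i\}$ a $\ZZ$-basis of $\CH^r(X)$) of total codimension $\le 0$ that cannot be expressed as a $BP$-linear combination of relations of strictly positive codimension.

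Assuming $r > 0$, by the additivity observation following Prop. \ref{prop:symm_action_bp} the symmetric operation $\Phi$ is additive on $BP^*_{(r)}(X)$, and applying it to $\omega$ yields
$$ 0 = \Phi(\omega) = \mathbf{i}^r\, t^{r(p-1)} \sum_i \Phi_{\le -r(p-1)}(\lambda_i) \cdot z_i. $$
Extracting the coefficient of each monomial $t^{-m}$ produces an infinite family of auxiliary relations $\sum_i \Phi_{-m - r(p-1)}(\lambda_i)\, z_i = 0$ in $BP^*_{(r)}(X)$. The main step---and the one I expect to be the central obstacle---is to combine these using the explicit behaviour of $\Phi_{-n}$ on the $BP$-generators $v_j$, as extracted from Lemma \ref{lm:St_vn} together with the defining identity $\square^p - St - \frac{p \cdot_\Omega t}{t}\Phi \equiv 0 \pmod{t \cdot BP[[t]]}$, in order to show that the original coefficients $\lambda_i$ are $BP$-linear combinations of the ``upgraded'' coefficients $\Phi_{-n}(\lambda_i)$ which lie in strictly higher codimension. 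This would express $\omega$ as a combination of strictly positive-codimension relations and produce the contradiction. Landweber's structural result on invariant prime ideals (Th. \ref{th:invariant_ideals}) is the natural tool to control this computation modulo each $I(n)$ and to ensure that the arithmetic is not obstructed by some hidden failure of linearity in $\Phi$.

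Having established the assertion on each $\Omega^*_{(r)}(X)$, the full statement for $\Omega^*(X)$ follows by lifting $\LL$-generators of each $\Omega^*_{(r)}(X)$ (which by the graded version can be chosen in degree $r$) to elements of $\tau^r \Omega^*(X)$, assembling these into a free $\LL$-module $F$ generated in degrees $0$ through $d$ (using that $\Omega^*_{(r)}(X) = 0$ for $r > d$), and then chasing the snake lemma through the short exact sequences of topological-filtration quotients to conclude that $\ker(F \to \Omega^*(X))$ inherits its generation in codimensions $1$ to $d$ from the graded analogues.
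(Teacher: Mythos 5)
The paper does not prove this theorem; it is quoted from Vishik's work \cite[Th.\ 4.1, 4.3]{Vish_Lazard} and used as an input throughout (notably in Prop.\ \ref{prop:finitely_presented} and Th.\ \ref{th:syzygies}). So there is no in-paper proof to compare against. What can be assessed is whether your reconstruction would actually establish the result, and here there is a genuine gap: the step you yourself flag as ``the central obstacle'' is not carried out, and it is precisely where all of the substance lives. Applying the symmetric operation $\Phi$ to a putative relation $\sum_i \lambda_i z_i = 0$ and extracting coefficients of $t^{-m}$ does produce auxiliary relations $\sum_i \Phi_{-m}(\lambda_i) z_i = 0$; this part is fine, using Prop.\ \ref{prop:symm_action_bp} and the additivity of $\Phi_{<0}$. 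But the claim that one can then ``show that the original coefficients $\lambda_i$ are $BP$-linear combinations of the upgraded coefficients $\Phi_{-n}(\lambda_i)$'' is stated as a hope rather than proved, and it is far from formal. The operation $\Phi$ is not $BP$-linear, and it does not produce a well-defined ``division'' on $BP$ globally: the linearity and divisibility statements that the present paper proves (Lemma \ref{lm:vn_lambda}, Prop.\ \ref{prop:symm_linearity}) hold only modulo carefully chosen ideals $J_n(u)$ and only in specific ranges of grading. Vishik's actual argument in \cite{Vish_Lazard} requires a delicate induction and is not a routine consequence of writing down the auxiliary relations; your sketch does not indicate how the induction terminates, how one deals with the ideals modulo which the divisibility holds, or how the codimension bound $\le r$ on the relations is extracted.

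Two smaller issues: $\CH^r(X)$ need not be a free abelian group, so ``a $\ZZ$-basis of $\CH^r(X)$'' does not in general exist; one should work with a generating set and be correspondingly more careful with the linear algebra. And the reduction to $BP$ via Props.\ \ref{prop:from_Omega_to_BP} and \ref{prop:BP_Omega_algebraic}, while plausible, needs a sentence of justification that generation of the relation module in positive degrees is indeed detectable after localizing at each prime (this is true because it amounts to the vanishing of $\Tor_1^\LL(\Omega^*_{(r)}(X),\LL/\LL^{<0})$ in degrees $\le 0$, and $\Tor$ commutes with localization, but as written the reduction is asserted without argument). The final assembly from the graded pieces to $\Omega^*(X)$ is a standard filtration argument and should go through once the graded case is established.
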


\begin{Cr}\label{cr:CH1_Vishik}
Let $X$ be a smooth variety, let $A^*$ be a free theory.

Then the map $\rho_A:\CH^1(X)\ot A \rarr A^*_{(1)}(X)$ is an isomorphism.
\end{Cr}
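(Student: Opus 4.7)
The plan is to first establish the case $A = \LL$---namely that $\rho$ itself is an isomorphism---and then deduce the general case by base change. Applying Theorem~\ref{th:relations_positive} with $r = 1$ provides a presentation $R \to F \to \Omega^*_{(1)}(X) \to 0$ by free $\LL$-modules $F, R$ both generated in degree~$1$. Since $\LL^0 = \ZZ$, any $\LL$-linear map between free $\LL$-modules generated in degree~$1$ is the $\LL$-linear extension of its restriction to the degree-$1$ part. Writing $F = \LL \otimes_\ZZ F_1$, $R = \LL \otimes_\ZZ R_1$ with $F_1, R_1$ free abelian groups placed in degree~$1$, and setting $N := F_1/\mathrm{image}(R_1 \to F_1)$, we obtain an isomorphism of graded $\LL$-modules
$$ \Omega^*_{(1)}(X) \;\cong\; \LL \otimes_\ZZ N, $$
with $N$ concentrated in degree~$1$.

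Next I identify $N$ with $\CH^1(X)$ using the canonical morphism of free theories $\Omega^* \to \CH^*$ (classifying the additive formal group law over $\ZZ$). This morphism is compatible with $\rho$, so it fits into a commutative square whose vertical arrows are induced by $\LL \to \ZZ$ and whose horizontal arrows are $\rho$ and $\rho_{\CH}$. By Prop.~\ref{prop:top_omega} applied to Chow theory, $\CH^*_{(1)}(X)$ is concentrated in degree~$1$ where it equals $\CH^1(X)$, and $\rho_{\CH}$ is the identity there. Therefore, in degree~$1$, the composition
$$ \CH^1(X) \rarr N \rarr \CH^1(X) $$
is the identity, since by naturality of the first Chern class $[D] \mapsto c_1^\Omega(\OO(D)) \bmod \tau^2 \mapsto c_1^\CH(\OO(D)) = [D]$. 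Hence the surjection $\CH^1(X) \to N$ is split, hence an isomorphism, and $\Omega^*_{(1)}(X) \cong \LL \otimes_\ZZ \CH^1(X)$; in particular $\rho$ is an isomorphism.

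For a general free theory $A^*$ with coefficient ring $A$, the map $\rho_A$ is the base change of $\rho$ along $\LL \to A$ followed by the natural comparison $\Omega^*_{(1)}(X) \otimes_\LL A \to A^*_{(1)}(X)$. Since $\Omega^*_{(1)}(X) \cong \LL \otimes_\ZZ \CH^1(X)$ is $\LL$-flat, tensoring is exact, and Prop.~\ref{prop:top_omega} applied to $A^*$ identifies $A^*_{(1)}(X)$ with $\Omega^*_{(1)}(X) \otimes_\LL A \cong A \otimes_\ZZ \CH^1(X)$. Under this identification $\rho_A$ becomes the tautological isomorphism $\CH^1(X) \otimes_\ZZ A \xrarr{\cong} A \otimes_\ZZ \CH^1(X)$, proving the claim.

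The main technical point is extracting the ``extension of scalars from $\ZZ$'' structural statement for $\Omega^*_{(1)}(X)$ from Theorem~\ref{th:relations_positive}; once this flatness is established, both the identification of the degree-$1$ summand with $\CH^1(X)$ (via splitting through Chow theory) and base changing to arbitrary free theories are formal.
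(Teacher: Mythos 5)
The part of your argument establishing the case $A = \LL$ is correct and close in spirit to the paper's: the paper also uses Theorem~\ref{th:relations_positive} to conclude that the kernel of $\rho$ is generated in degree~$1$, and then kills the degree-$1$ part of the kernel by the splitting through $\CH^*$, exactly as you do. Your extraction of the extension-of-scalars form $\Omega^*_{(1)}(X)\cong \LL\otimes_\ZZ N$ from the presentation is a valid reorganization of the same idea.

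The base change step, however, has a genuine gap. You claim $\Omega^*_{(1)}(X)\cong \LL\otimes_\ZZ\CH^1(X)$ is $\LL$-flat, but this is false whenever $\CH^1(X)$ has torsion: if $N$ has $p$-torsion then $\LL\otimes_\ZZ N$ has $\LL/p$ as a direct summand, and $\Tor_1^\LL(\LL/p,\LL/p)\neq 0$. Since $\CH^1(X)=\mathrm{Pic}(X)$ can certainly have torsion, flatness fails in general. Moreover, even if flatness held, it would not by itself yield the identification $A^*_{(1)}(X)\cong \Omega^*_{(1)}(X)\otimes_\LL A$: the canonical comparison map is always surjective (because $\rho_A$ factors through it), but its injectivity is the real content and Prop.~\ref{prop:top_omega} (which is stated only for $\Omega^*$, and in any case describes the filtration, not the base-change map) does not supply it.

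What is actually needed, and what the paper uses, is that $\Omega^*$ is generically constant, so that the inclusion $\LL\cdot 1_X\hookrightarrow\Omega^*(X)$ is split and $\Omega^*(X)=\LL\oplus\tau^1\Omega^*(X)$. Tensoring this splitting with $A$ identifies $\tau^1\Omega^*(X)\otimes_\LL A$ with $\tau^1A^*(X)$; combined with the surjectivity of $\tau^2\Omega^*(X)\otimes_\LL A\to\tau^2A^*(X)$ (both sides being generated by images of $\rho$ in degrees $\ge 2$) and right exactness of $\otimes_\LL A$, one obtains $\Omega^*_{(1)}(X)\otimes_\LL A\cong A^*_{(1)}(X)$ without any flatness hypothesis. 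You should replace the flatness appeal with this splitting argument.
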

\begin{proof}
Presheaf $\Omega^*$ is generically constant (\cite[Def. 4.4.1, Cor. 4.4.3]{LevMor}),
and, thus, the topological filtration on it splits in the first step:
$\Omega^*=\LL\oplus \tau^1\Omega^*$. Tensoring by $A$ over $\LL$ we obtain an isomorphism 
$\Omega^*_{(1)}\ot_{\LL} A = A^*_{(1)}$, since the canonical map $\tau^2 \Omega^*\ot_\LL A\rarr \tau^2 A^*$ is a surjection.
Therefore the claim for a free theory $A^*$ follows by a base change from $\Omega^*$.

Let us prove the statement for $A^*=\Omega^*$.
The restriction of $\rho$ to $\CH^1(X)$ is injective,
as there is a surjection $\Omega^*_{(1)}(X)\rarr \CH^1(X)$
and its composition with $\rho$ is the identity map.
However, by Vishik's theorem the kernel of $\rho$ is generated in degree 1,
and therefore is zero.
\end{proof}

\subsection{Ind-coherence of algebraic cobordism over the Lazard ring}\label{sec:indcoh}

Let $R$ be a ring. Recall that a $R$-module is called {\sl coherent}
 if it is finitely generated and all its finitely generated submodules (including the whole module)
are finitely presented.
Let us say that a $R$-module is {\sl ind-coherent}
if it is a union of coherent modules,
or, which is the same, if every finitely generated submodule is finitely presented.

\begin{Prop}\label{prop:finitely_presented}
Modules over the Lazard ring $\Omega^*(X)$, $\Omega^*_{(r)}(X)$ are ind-coherent $\LL$-modules.
The $BP$-version also holds: $BP$-modules $BP^*(X)$, $BP^*_{(r)}(X)$ are ind-coherent.

In particular, the annihilator of any element of these modules is finitely generated.
\end{Prop}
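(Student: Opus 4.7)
The plan is to reduce ind-coherence to a finite-generation statement about the intersection of a finitely generated free submodule of $F$ with the relation submodule coming from Vishik's Theorem \ref{th:relations_positive}. First I would observe that $\LL\cong\ZZ[x_1,x_2,\ldots]$ and $BP\cong\Zp[v_1,v_2,\ldots]$ are coherent rings, since polynomial rings in countably many variables over $\ZZ$ or $\Zp$ are coherent (for instance, as filtered colimits of Noetherian polynomial rings along flat extensions). Over a coherent ring, a finitely generated module is coherent if and only if it is finitely presented, and the class of coherent modules is closed under kernels, cokernels, and extensions. Consequently, to prove that $M$ is ind-coherent it suffices to exhibit every finitely generated submodule $N\subseteq M$ as a quotient of a finitely generated free module by a finitely generated submodule; in particular this will imply the finite generation of $\Ann(m)$ for each $m\in M$ as the cyclic case.

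Fix $M=\Omega^*(X)$ and a finitely generated submodule $N\subseteq M$ with generators $m_1,\ldots,m_k$; the remaining cases are variants. By Theorem \ref{th:relations_positive} there is a free presentation $R\xrarr{\phi} F\twoheadrightarrow M$ with $F$ free on a basis in degrees $[0,d]$ and $R$ free on a basis in degrees $[1,d]$. Since each lift $\tilde m_i\in F$ has finite support on the basis of $F$, all the $\tilde m_i$ lie in a common finitely generated free direct summand $F_S=\bigoplus_{\alpha\in S}\LL e_\alpha$ for a finite $S$, with complementary summand $F_{S^c}$. Writing $K=\phi(R)=\ker(F\rarr M)$, the submodule $N$ equals the image of $F_S$ in $M$, so $N\cong F_S/(F_S\cap K)$. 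Thus $N$ is finitely presented as soon as $F_S\cap K$ is a finitely generated $\LL$-submodule of $F_S$.

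The main obstacle is therefore to show that $F_S\cap K$ is finitely generated. Decomposing $F=F_S\oplus F_{S^c}$ with projection $\pi\colon F\rarr F_{S^c}$ gives
$$ F_S\cap K \;=\; \phi\bigl(\ker(\pi\circ\phi)\bigr), $$
so it suffices to prove that $\ker(\pi\circ\phi)\subseteq R$ is finitely generated. Now $\pi\circ\phi\colon R\rarr F_{S^c}$ is an $\LL$-linear map between free $\LL$-modules whose generator images lie in degrees $[1,d]$ by Vishik's theorem. Since $F_S$ has degrees bounded above by $d$ and $\LL$ is finitely generated as an abelian group in each degree, each graded component of $F_S$ (and hence of $F_S\cap K$) is itself a finitely generated abelian group. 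My plan is then to run a graded Nakayama-style argument, exploiting the degree bound $[1,d]$ on the generators of $R$, to show that $\ker(\pi\circ\phi)$ is $\LL$-generated by its components in a bounded range of degrees; combined with the finite generation of each such component, this yields the required finite generation. This degree-bounded reduction, which really uses the Vishik presentation rather than an abstract free resolution, is the technical heart of the proof.

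For the graded variants $\Omega^*_{(r)}(X)$ one applies the sharper form of Theorem \ref{th:relations_positive} with $F$ concentrated in degree $r$ and $R$ in degrees $[1,r]$, and the same argument runs verbatim. For $BP^*(X)$ and $BP^*_{(r)}(X)$ one tensors the $\LL$-presentation with $BP$ over $\LL$ via Proposition \ref{prop:from_Omega_to_BP}, which preserves all relevant degree bounds and free-module structures, and then repeats the argument over the coherent ring $BP$.
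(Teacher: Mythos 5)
Your high-level strategy — reduce ind-coherence to showing every finitely generated $N\subseteq M$ is finitely presented, and get that by controlling the relations $F_S\cap K$ inside a finitely generated free summand via Vishik's presentation — is the same as the paper's, but the execution has a real gap at exactly the step you flag as ``the technical heart.''

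The stated sub-goal is wrong: you cannot hope to prove that $\ker(\pi\circ\phi)\subseteq R$ is finitely generated. That kernel contains $\ker(\phi)$, which is the first syzygy module of $M$, and there is no reason for it to be finitely generated over $\LL$ when $R$ and $F_{S^c}$ are infinitely generated free modules. (Coherence of $\LL$ gives finite generation of kernels only between \emph{finitely generated} free modules.) What you actually need is that the image $\phi(\ker(\pi\circ\phi))=F_S\cap K$ is finitely generated; these are not the same, since passing to the image factors out the possibly huge $\ker(\phi)$.

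The real content, which you gesture at with ``graded Nakayama-style argument,'' is to show $F_S\cap K$ is generated in a \emph{bounded} range of degrees. The facts you cite — that $F_S\cap K$ is concentrated in degrees $\le d$ and each graded piece is a finitely generated abelian group — do not suffice: there could a priori be new generators in arbitrarily negative degree. Coherence of $\LL$ does not help here either; coherence upgrades ``f.g.\ implies f.p.'' but does not produce finite generation. The paper supplies the missing ingredient as Lemma~\ref{lm:essentially_noetherian}: if $K\ot_\LL \LL/\LL^{<0}$ and $\Tor_1^\LL(K,\LL/\LL^{<0})$ are concentrated in a window of width $D$, then $K$ descends to the \emph{noetherian} subring $\LL^{\ge -D}$, and one then imports finite generation from there. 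To apply this one needs the degree bounds, which the paper obtains by a $\Tor$-long-exact-sequence analysis of $0\to N\to M\to M/N\to 0$ against $\LL/\LL^{<0}$, with the input being precisely that Vishik's presentation has $R$ in degrees $[1,d]$ and $F$ in $[0,d]$. You should either reproduce this descent, or give an actual argument bounding the generating degrees of $F_S\cap K$; without one of these the proof does not close.

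Two minor remarks: your observation that $\LL$ and $BP$ are coherent (as filtered colimits of noetherian polynomial rings along flat maps) is correct and pleasant, but the paper avoids needing it by working directly over the noetherian $A^{\ge -D}$. And the paper treats $\Omega^*$, $\Omega^*_{(r)}$, $BP^*$, $BP^*_{(r)}$ uniformly by axiomatizing the base ring $A$ rather than base-changing the $\LL$-presentation; your base-change route for $BP$ is fine but slightly less economical.
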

\begin{proof}
To deal with all the cases simultaneously let $A=\oplus_{i\le 0} A^i$ be a graded ring
 which is polynomial over the noetherian ring $A^0$ s.t. $A^i$ is a free $A^0$-module of finite rank.
Denote by $A^{\ge -D}$ the $A^0$-subalgebra of $A$ generated by elements in degrees greater or equal to $-D$.
It follows from the assumptions on $A$ that $A^{\ge -D}$ is a noetherian ring, and $A$ is flat over $A^{\ge -D}$ for any $D\ge 0$.
Let $M=\oplus_{i\le d} M^i$ be a graded module over $A$. 

Assume that there exist a presentation of $M$: $A(R)\xrarr{f} A(G)\rarr M\rarr 0$,
where $A(G)$ is a free graded $A$-module generated by a set $G$ in degrees between 0 and $d$,
$A(R)$ is a free graded module generated by a set $R$ in degrees between 1 and $d$.
Our goal is to prove that any finitely generated submodule $N$ of $M$ is finitely presented,
which then will prove the statement as the modules $\Omega^*(X)$ and alike
 have such presentations as above by Th. \ref{th:relations_positive}.

The following lemma will allow us to reduce everything to the 
noetherian world of $A^{\ge -D}$-modules.

\begin{Lm}\label{lm:essentially_noetherian}
Let $A$ be as above, and let $K$ be a graded $A$-module.

Assume that $K\ot_A A/A^{<0}$ and $\Tor_1^A(K,A/A^{<0})$ 
are concentrated in degrees between $a$ and $a+D$ for some $a\in \ZZ$, $D\ge 0$.

Then there exist a module $\tilde{K}$ over the ring $A^{\ge -D}$ 
s.t. $K=\tilde{K}\ot_{A^{\ge -D}} A$.
\end{Lm}
\begin{proof}[Proof of the Lemma.]
It follows from the assumptions that there exist a presentation of $K$:
 $A(R)\xrarr{f} A(F)\rarr K\rarr 0$,
where $A(R)$, $A(F)$ are free $A$-modules generated in degrees between $a$ and $a+D$
by sets $R$ and $F$.

Note that  the morphism $f$ is defined over $A^{\ge -D}$ as 
it sends a generator of relations $R$ to an element which is a linear combination of generators of $F$
with some coefficients. For this element to have degree between $a$ and $a+D$
the coefficients need to have degree no less than $-D$.

Consider a module $\tilde{K}$ defined over $A^{\ge -D}$
by a presentation $A^{\ge -D}(R) \xrarr{f} A^{\ge -D}(F) \rarr \tilde{K}\rarr 0$
 with the same sets of generators and relations as $K$,
 and the morphism $f$ defined in the same way.
Tensoring this sequence by $A$ over $A^{\ge -D}$
we obtain the presentation of $K$, and therefore $K=\tilde{K}\ot_{A^{\ge -D}} A$.
\end{proof}

Let $N$ be a finitely generated submodule of $M$, consider
an exact sequence of $A$-modules: $0\rarr N\rarr M\rarr M/N\rarr 0$.
Applying $\ot_A A/A^{<0}$ to it we obtain the long exact sequence of $A_0$-modules,
from which we emphasise two pieces.
First, from the exact sequence $M\ot_A A/A^{<0}\rarr M/N \ot_A A/A^{<0} \rarr 0$
it follows that $M/N \ot_A A/A^{<0}$ is concentrated in degrees between $0$ and $d$.
Second, from the exact sequence $\Tor_1^A(M,A/A^{<0})\rarr \Tor_1^A(M/N,A/A^{<0}) \rarr N\ot_A A/A^{<0}$
it follows that $\Tor_1^A(M/N,A/A^{<0})$ is concentrated in degrees between $min(m,0)$ and $d$
where $m$ is the minimal degree of a finite set of generators of $N$.

Applying Lemma \ref{lm:essentially_noetherian} we see that
$M=\tilde{M}\ot_{A^{\ge -D}} A$, $M/N=\tilde{M/N}\ot_{A^{\ge -D}} A$ 
for some $D\ge 0$ and $A^{\ge -D}$-modules $\tilde{M}$, $\tilde{M/N}$.
We claim that the canonical morphism $M\rarr M/N$ also comes from 
the morphism $\tilde{\pi}:\tilde{M}\rarr \tilde{M/N}$ 
by the arguments of Lemma \ref{lm:essentially_noetherian}.

Denoting by $\tilde{N}$ the kernel of $\tilde{\pi}$
it is easy to see that $N=\tilde{N} \ot_{A^{\ge -D}} A$
as $A$ is flat over ${A^{\ge -D}}$.
As $\tilde{N}$ is a finitely generated module over the noetherian ring,
it is therefore finitely presented. 
Tensoring the finite presentation of $\tilde{N}$
by $A$ we get the finite presentation of $N$.
\end{proof}
\begin{Rk}
To compare the previous Corollary with results in Topology note that 
for a finite CW-complex $X$ the $\LL$-module $MU^*(X)$ is always finitely generated,
and the fact that it is coherent was proved in \cite[Th. 1.3]{ConSmi}.

In algebraic geometry the $\LL$-module $\Omega^*(X)$ does not have to be finitely generated even
for a geometrically cellular variety $X$ 
(which is a notion presumably quite close to that of a CW-complex).
More precisely, there are examples of  quadrics
 for which $\CH^*$ is not a finitely generated abelian group,
and therefore $\Omega^*$ is not a finitely generated $\LL$-module.
\end{Rk}

\begin{Rk}\label{rem:indcoh_abelian_cat}
Note that ind-coherent modules form a full abelian subcategory in the category of all modules,
kernels and cokernels being computed in the category of all modules.
Thus, whenever we have a map $f:X\rarr Y$ of smooth varieties,
the kernel, image and cokernel of $f^*:\Omega^*(Y)\rarr \Omega^*(X)$ 
are ind-coherent modules which also are $(\LL,\LL B)$-comodules.

In particular, if $X$ is geometrically cellular variety,
 the $\LL\mbox{-}$submodule of rational elements
of a free finitely generated $\LL$-module $\Omega^*(\bar{X})$
is coherent.
\end{Rk}

\section{Symmetric operations and the structure of cobordism}\label{sec:topfilt_symm}

The goal of this section is to use symmetric operations
to prove structural results on $BP^*_{(r)}$ and $\Omega^*_{(r)}$ 
as $BP$- and $\LL$-modules, respectively. 
Let us briefly explain how it is done.

The Landweber's structural result in the case of a coherent cyclic $BP$-module 
with the action of Landweber-Novikov operations (i.e. $(BP, BP_*BP)$-comodule)
can be shown roughly as follows (cf. \cite[proof of Th. 20.11]{BJW}).
 Denote by $x$ the generator of this module,
and without loss of generality assume that $x$ is $p$-torsion, which is equivalent
by Th.~\ref{th:radical-invariant-ideal} to the claim that the module is not free.
Then one proves that there exist $u\in BP$ s.t. $\Ann(ux)=I(n)$ for some $n$
and $(ux)$ is invariant with respect to Landweber-Novikov operations. It allows
to continue the process by induction considering at the next step the module $BP\cdot x/ (u)x$,
even though the question of finiteness of this process is not straight-forward
since $BP$ is not noetherian. We will explain the termination of this process 
in the case of our interest separately.
In order to achieve such conditions on the element $u$,
one chooses it to be equal to $p^{k_0}v_1^{k_1}\cdots v_{n-1}^{k_n-1}$
so that $p^{k_0+1}x=p^{k_0}v_1^{k_1+1}x=\ldots=v_{n-1}ux=0$.
To find these numbers start with $k_0$ such that $p^{k_0+1}x=0$ and $p^{k_0}x\neq 0$,
then find $k_1$ s.t. $p^{k_0}v_1^{k_1+1}x=0$, $p^{k_0}v_1^{k_1}x\neq 0$, and so on. 
One can show that this process stops using the coherence of the module, i.e. the fact 
that $\Ann(x)$ is finitely generated.
Thus, for some $n$ we have $v_n^i\notin \Ann(ux)$ for any $i\ge 0$,
and by the classification of invariant ideals (Th. \ref{th:invariant_ideals}, Cor. \ref{cr:radical})
the annihilator of $ux$ is, indeed, $I(n)$. 
The claim that $(ux)$ is invariant under Landweber-Novikov operations
follows from what we call linearity property.
Namely, if one denotes by $J_{n-1}(u)$ the ideal 
$(p^{k_0+1}, p^{k_0}v_1^{k_1+1}, \ldots, p^{k_0}v_1^{k_1}\cdots v_{n-2}^{k_{n-2}} v_{n-1}^{k_{n-1}+1})$
which is contained in the annihilator of $x$, then 
$S_{L-N}^{tot}(u)\equiv u \mod J_{n-1}(u)$ (Prop. \ref{prop:lin_landweber-novikov}).
By multiplicativity $S_{L-N}^{tot}(ux)=S_{L-N}^{tot}(u)S_{L-N}^{tot}(x)=S_{L-N}^{tot}(u)ax$
for some $a\in BP$. It follows that $S_{L-N}^{tot}(ux)=aux$,
and the submodule $BP \cdot ux$ is invariant.

However, symmetric operations do not satisfy similar linearity properties in all gradings,
which allows to use them to obtain structural results with some bounds on gradings. 
This is investigated in Section \ref{sec:linearity},
where we prove that for specific $m$ the operation $\Phi_{m}$ 
allows to divide some elements $u$ by $v_n$, 
at least modulo the same ideal $J_{n-1}(u)$ mentioned above. 
We call this {\sl non-linearity} of symmetric operations.
On the other hand, symmetric operations $\Phi_{-m}$ satisfy linearity
for sufficiently big $m$.

In Section \ref{sec:structure} we define $\Phi$-modules
as cyclic $BP$-modules with an action of the symmetric operation
and the total Landweber-Novikov operation modelling a cyclic submodule of $BP^*_{(r)}$ 
for some $r$ (Def. \ref{def:symm_module}).
The non-linearity and linearity of symmetric operations allow to construct a filtration of a $\Phi$-module
by $BP$-modules s.t. the factors are of the form $BP/I(n_i)e_i$
with $\deg e_i \ge \frac{p^{n_i}-1}{p-1}$ (Prop. \ref{prop:degree_estimates}).
More precisely, we apply the strategy explained above
and start with an element $z$ of degree $r$ in $BP^*_{(r)}$.
Then we can find a monomial $u\in \Zp[v_1,\ldots,v_{n-1}]$ s.t. $J_{n-1}(u)\subset \mathrm{Ann \ }(z)$
and therefore $I(n)uz=0$.
If the non-linearity applies (which is the condition that $\deg(uz)$ is smaller than $\frac{p^n-1}{p-1}$),
then $v_{n}^i (uz)\neq 0$, and it follows that $\Ann(uz)=I(n)$.
The linearity allows to show that the module generated by $uz$ is invariant by symmetric operations,
and thus one could consider the action of symmetric operations on $BP\cdot z/BP\cdot uz$.
It is always the case that $uz$ has positive degree, and therefore the factor-module
has less $\Zp$-generators in positive degrees. This allows the process to stop after finitely many steps.

In Section \ref{sec:filtration_integral} we glue the results on $BP^*_{(r)}$ for different $p$
to obtain a structural restriction on $\Omega^*_{(r)}$. 

\subsection{Linearity of Landweber-Novikov operations over a point}

For $n\in \NN$, $k_i\ge 0, i\colon 0\le i\le n$ and $u=p^{k_0}v_1^{k_1}\cdots v_{n}^{k_n}\in BP$ 
denote by $J_n(u)$ the ideal 
$(p^{k_0+1}, p^{k_0}v_1^{k_1+1}, \ldots, p^{k_0}v_1^{k_1}\cdots v_{n-1}^{k_{n-1}} v_{n}^{k_{n}+1})$.
For example, $J_n(1)=(p,v_1,\ldots, v_n) = I(n+1)$.

\begin{Lm}\label{lm:S_L-N(v_n)}
The restriction of the total Landweber-Novikov to $BP^*$ acts on the coefficients as follows: 
$$S_{L-N}^{tot}(v_n)\equiv v_n \mod I(n).$$

In particular, $S_{L-N}^{tot}(v_{n}^{k}) \equiv v_{n}^{k} \mod J_n(v_{n}^{k})$ for any $k\ge 1$.
\end{Lm}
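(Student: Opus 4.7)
The plan is to mimic the proof of Lemma~\ref{lm:St_vn}, using Riemann--Roch (Theorem~\ref{th:mult_riemann_roch}) applied to the total Landweber--Novikov operation in place of the Steenrod operation. I would choose a smooth projective $\pi\colon X\to\Spec F$ of dimension $p^n-1$ whose $BP$-class is $v_n$; Theorem~\ref{th:mult_riemann_roch} then gives $S_{L-N}^{tot}(v_n)=\pi_*(\Td_{L-N}(T_X))$, where the Todd genus $\Td_{L-N}(x)=x/\gamma_{L-N}(x)=1-b_1 x+\cdots$ has coefficients in $B=\ZZ[b_1,b_2,\ldots]$, $\deg b_i=-i$. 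Thus $\Td_{L-N}(T_X)$ decomposes as a sum of pieces lying in $BP^k(X)\otimes B^{-k}$ for $k\geq 0$.

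Next I would analyse the push-forward of each piece, paralleling the three-case split at the end of the proof of Lemma~\ref{lm:St_vn}. Such a piece maps to $BP^{k+1-p^n}(\Spec F)\otimes B^{-k}$. The case $k=0$ contributes exactly $\pi_*(1_X)=v_n$. Pieces with $k>p^n-1$ vanish since $BP$ is concentrated in non-positive cohomological degrees. For $0<k<p^n-1$ the image lies in $BP^{k+1-p^n}$, a group in degree strictly between $1-p^n$ and $0$; since $BP/I(n)\cong \mathbb{F}_p[v_n,v_{n+1},\ldots]$ has no non-zero element in this range (its non-constant monomial of least negative degree is $v_n$ itself, of degree $1-p^n$), this piece already lies in $I(n)$. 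Finally, the $k=p^n-1$ piece pushes forward to a characteristic number of $X$ in $BP^0=\Zp$, divisible by $p$ by the standard Milnor characterisation of $v_n$, hence lies in $(p)\subset I(n)$.

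Summing, $S_{L-N}^{tot}(v_n)-v_n\in I(n)\cdot BP[b_1,b_2,\ldots]$, proving the first assertion. The second follows by multiplicativity: writing $S_{L-N}^{tot}(v_n)=v_n+\alpha$ with $\alpha\in I(n)\cdot BP[b_i]$ and expanding $S_{L-N}^{tot}(v_n^k)=(v_n+\alpha)^k$ by the binomial theorem, every term other than $v_n^k$ is divisible by $\alpha$, hence lies in $I(n)\cdot BP[b_i]\subseteq J_n(v_n^k)\cdot BP[b_i]$, the inclusion $I(n)\subseteq J_n(v_n^k)$ being immediate from the definition of $J_n$.

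The only non-trivial inputs are the degree bookkeeping for $0<k<p^n-1$ and the $p$-divisibility of the top characteristic number; both are already packaged in the proof of Lemma~\ref{lm:St_vn}, so I expect no real obstacle. The argument is essentially a direct translation of that proof, simplified by the absence of the parameter $t$.
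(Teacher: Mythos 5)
Your argument is correct, and it is precisely the route the paper explicitly flags as an alternative: ``the first part can be shown analogous to the proof of Lemma~\ref{lm:St_vn}.'' You carry that out faithfully, with the same three-way split on the cohomological degree $k$ of pieces of $\Td_{L-N}(T_X)$, the same degree bookkeeping showing that $\pi_*$ applied to pieces with $0<k<p^n-1$ lands in degrees strictly between $1-p^n$ and $0$ and hence in $I(n)$, and the same appeal to $p$-divisibility of the top characteristic number. The multiplicativity argument for the second part is also fine: $I(n)\subset J_n(v_n^k)$, so every term of $(v_n+\alpha)^k$ other than $v_n^k$ is absorbed.

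What the paper's written-out proof actually does is shorter and more structural: it notes that $S_{L-N}^{tot}(v_n)$, decomposed over the monomial basis of $B=\ZZ[b_1,b_2,\ldots]$, has $BP$-coefficients concentrated in degrees between $1-p^n$ and $0$ (a purely degree-theoretic observation, since the total operation preserves total degree), that these coefficients all lie in $I(n+1)$ because $I(n+1)$ is an invariant ideal containing $v_n$, and that any element of $I(n+1)$ of degree strictly greater than $1-p^n$ already lies in $I(n)$; the coefficient in degree exactly $1-p^n$ is the $\alpha=\emptyset$ term, namely $v_n$ itself. Your Riemann--Roch route trades this one appeal to invariance of $I(n+1)$ for an explicit push-forward computation. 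Both are valid; the paper's version generalises more transparently (it is essentially the same mechanism used throughout to exploit invariant ideals), while yours is closer to the geometry and reuses the template of Lemma~\ref{lm:St_vn} without needing the notion of invariant ideal at all.

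One small bookkeeping remark: when you say ``$BP/I(n)\cong \mathbb{F}_p[v_n,v_{n+1},\ldots]$ has no non-zero element in this range,'' the cleaner statement (and the one the paper uses) is that every element of $BP$ of degree strictly between $1-p^n$ and $0$ lies in $I(n)$; this is what you actually need, and it is equivalent to what you wrote, but it avoids invoking the quotient ring.
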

\begin{proof}
The first part can be shown analogous to the proof of Lemma~\ref{lm:St_vn},
or by noting that $S_{L-N}(v_n)\in \oplus_{i=0}^{1-p^n} BP^i$,
$S_{L-N}(v_n)\in I(n+1)$ since $I(n+1)$ is an invariant ideal, 
and elements of $I(n+1)$ of degree greater than $1-p^n$ 
are contained in $I(n)$.

For the second part note that 
$J_n(v_{n}^{k})=(p,v_1,v_2,\ldots, v_{n-1}, v_n^{k+1})$ contains $I(n)$.

\end{proof}

\begin{Prop}\label{prop:lin_landweber-novikov}
Let $u=p^{k_0}v_1^{k_1}\cdots v_n^{k_n}\in BP$, then 
 $S_{L-N}^{tot}(u)\equiv u \mod J_n(u)$.
\end{Prop}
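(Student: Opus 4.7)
The plan is to use the multiplicativity of $S_{L-N}^{tot}$ together with Lemma \ref{lm:S_L-N(v_n)}, and then chase indices in the resulting binomial expansion. Since $S_{L-N}^{tot}$ is a ring map fixing the integer $p$,
$$S_{L-N}^{tot}(u) \;=\; p^{k_0}\prod_{i=1}^{n} S_{L-N}^{tot}(v_i^{k_i}),$$
and the Lemma gives $S_{L-N}^{tot}(v_i^{k_i}) = v_i^{k_i} + r_i$ with $r_i \in J_i(v_i^{k_i}) = (p, v_1, \ldots, v_{i-1}, v_i^{k_i+1})$. Expanding this product and subtracting $u$ leaves a sum of ``cross terms'' indexed by nonempty subsets $S \subseteq \{1,\ldots,n\}$, each of the form $T_S = p^{k_0}\prod_{i \in S^c} v_i^{k_i}\prod_{i \in S} r_i$. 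The whole proof reduces to showing $T_S \in J_n(u)$ for every such $S$.

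To analyse $T_S$, I would single out the minimum $j = \min S$ and decompose $r_j = p\, y_0 + \sum_{l=1}^{j-1} v_l y_l + v_j^{k_j+1} y_j$ for some $y_l \in BP$. Distributing this splits $T_S$ into three types of sub-terms, and in each case I claim that one of the generators of $J_n(u)$ divides the result. For the $p\, y_0$ piece the extra factor of $p$ produces $p^{k_0+1}$, the lowest generator of $J_n(u)$. For the $v_l y_l$ piece with $l < j$, the minimality of $j$ forces $\{1,\ldots,l\} \subseteq S^c$, so the factors $v_1^{k_1},\ldots,v_{l-1}^{k_{l-1}}, v_l^{k_l}$ are already present in $T_S$; multiplying by the extra $v_l$ yields the generator $p^{k_0}v_1^{k_1}\cdots v_{l-1}^{k_{l-1}}v_l^{k_l+1}$. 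For the $v_j^{k_j+1} y_j$ piece, minimality again gives $\{1,\ldots,j-1\} \subseteq S^c$, so $v_1^{k_1},\ldots,v_{j-1}^{k_{j-1}}$ are present in $T_S$, and combined with $v_j^{k_j+1}$ this yields the generator indexed by $m = j$.

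I do not foresee any genuine obstacle here; the proposition is essentially a combinatorial unwinding of Lemma \ref{lm:S_L-N(v_n)} through the multiplicativity of $S_{L-N}^{tot}$. The only care needed is in the choice $j = \min S$, which ensures that whichever of the three types of factor is contributed by $r_j$, the ``earlier'' factors $v_1^{k_1},\ldots,v_{j-1}^{k_{j-1}}$ are automatically supplied by $\prod_{i\in S^c} v_i^{k_i}$ and align with the corresponding generator of $J_n(u)$. The contributions of $r_i$ for $i \in S\setminus\{j\}$ never enter the argument and may simply be absorbed into the ambient $BP$-coefficient.
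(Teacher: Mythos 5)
Your proof is correct, and it takes a genuinely different route from the paper's, though both rest on the same ingredient, Lemma~\ref{lm:S_L-N(v_n)}. The paper proceeds by induction on $n$: it factors $u = u'\, v_{n+1}^{k_{n+1}}$, applies the inductive hypothesis to $u'$ and Lemma~\ref{lm:S_L-N(v_n)} to $v_{n+1}^{k_{n+1}}$, multiplies the two congruences, and closes the loop by noting that both $J_n(u')$ and $u'\,J_{n+1}(v_{n+1}^{k_{n+1}})$ sit inside $J_{n+1}(u) = (J_n(u'), u' v_{n+1}^{k_{n+1}+1})$. You instead expand the whole product $\prod_i S_{L-N}^{tot}(v_i^{k_i})$ at once and index the error terms by nonempty subsets $S$, then pick $j = \min S$ and split $r_j$ according to the generators of $J_j(v_j^{k_j})$. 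Your key observation — that minimality of $j$ forces $\{1,\ldots,j-1\} \subseteq S^c$, so the undisturbed factors $v_1^{k_1},\ldots,v_{j-1}^{k_{j-1}}$ are automatically present and pad out whichever generator of $J_n(u)$ is being matched — is exactly the combinatorial content that the paper's induction handles implicitly one factor at a time. The induction is a bit more economical since it only ever needs the ideal containment for one new variable; your argument is more explicit but requires the $\min S$ device to avoid bookkeeping. Both are valid, and the underlying mechanism is the same.
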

\begin{proof}

By induction on $n$. For $n=0$ we have $S_{L-N}^{tot}(u)=u$ by the additivity of $S_{L-N}^{tot}$.

By the induction assumption we have $S_{L-N}^{tot}(u)\equiv u \mod J_n(u)$ for $u=p^{k_0}v_1^{k_1}\cdots v_n^{k_n}$,
and by Lemma \ref{lm:S_L-N(v_n)} 
we have $S_{L-N}^{tot}(v_{n+1}^{k_{n+1}}) = v_{n+1}^{k_{n+1}} \mod J_{n+1}(v_{n+1}^{k_{n+1}})$.
As the operation $S_{L-N}^{tot}$ is multiplicative we have
$S_{L-N}^{tot}(uv_{n+1}^{k_{n+1}})= S_{L-N}^{tot}(u) S_{L-N}^{tot}(v_{n+1}^{k_{n+1}})$,
and, thus,  for some $x\in J_n(u)$, $y\in J_{n+1}(v_{n+1}^{k_{n+1}})$ the following holds
$$S_{L-N}^{tot}(uv_{n+1}^{k_{n+1}}) = (u+x)(v_{n+1}^{k_{n+1}}+y)
\equiv uv_{n+1}^{k_{n+1}} \mod J_{n+1}(uv_{n+1}^{k_{n+1}}),  $$
the last equality being true since ideals $J_n(u)$ and $uJ_{n+1}(v_{n+1}^{k_{n+1}})$ are contained in 
$J_{n+1}(uv_{n+1}^{k_{n+1}})=(J_n(u),uv_{n+1}^{k_{n+1}+1})$.
\end{proof}

\subsection{Linearity and non-linearity of symmetric operations over a point}\label{sec:linearity}

For any $\lambda\in BP$ the equation~(\ref{eq:symm}) defining the symmetric operation can be rewritten as

\begin{equation}\label{eq:symm_pt}
\lambda^p - St(\lambda) =^{t^{\le 0}} [p]\cdot \Phi(\lambda), \qquad \mbox{where \ } [p]=\frac{p\cdot_\Omega t}{t}.
\end{equation}

The following lemma will be useful in dealing with this equation.
If $f\in R[[t]][t^{-1}]$ (or $R[[t]]$, or $R[t^{-1}]$),
 denote by $a_k(f)$ the coefficient of monomial $t^k$ in $f$,
and denote by $\deg f=\min \{k:a_k(f)\neq 0\}$ the minimal power of $t$ 
appearing in $f$ with a non-zero coefficient.

\begin{Lm}\label{lm:solve_eq}
Let $R$ be an integral domain,
 let $f(t)\in R[[t]][t^{-1}]$, $\mathfrak{p}(t)\in R[[t]]$, $\phi(t)\in R[t^{-1}]$.
Assume that the following relation between $f,\mf{p},\phi$ holds:

\begin{equation}\label{eq:solve_eq}
 f(t) =^{t^{\le 0}} \mf{p}(t)\phi(t).
\end{equation}
Assume that $\deg f=k\le 0$, and let $\deg \mf{p}=m$.

Then $\deg \phi=k-m$, and $a_k(f)=a_m(\mf{p})a_{k-m}(\phi)$. 
\end{Lm}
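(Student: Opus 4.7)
The plan is a direct analysis of the lowest-degree coefficient of $\mathfrak{p}(t)\phi(t)$, using that $R$ is an integral domain so no cancellation can occur there. Since $\mathfrak{p}\in R[[t]]$ we have $m\ge 0$, and since $\phi\in R[t^{-1}]$ the integer $\ell:=\deg\phi$ satisfies $\ell\le 0$. Both $\mathfrak{p}$ and $\phi$ must be non-zero, for otherwise $\mathfrak{p}\phi=0$ would force $a_j(f)=0$ for all $j\le 0$, contradicting $a_k(f)\ne 0$ with $k\le 0$.

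Next I would compute the lowest-degree term of the product. The coefficient of $t^j$ in $\mathfrak{p}\phi$ vanishes for $j<m+\ell$, while the coefficient of $t^{m+\ell}$ equals $a_m(\mathfrak{p})\cdot a_\ell(\phi)$; this is non-zero because $R$ is an integral domain. Hence $\deg(\mathfrak{p}\phi)=m+\ell$. The key step is then to show $m+\ell\le 0$, so that this lowest-degree term lies in the range of the hypothesized equality. Indeed, if $m+\ell>0$, then every coefficient of $\mathfrak{p}\phi$ in non-positive degrees would vanish, and by the hypothesis the same would be true for $f$, contradicting $\deg f=k\le 0$.

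Once $m+\ell\le 0$ is established, the comparison takes place entirely in the range where the two sides agree. Matching coefficients yields $a_j(f)=0$ for $j<m+\ell$ together with $a_{m+\ell}(f)=a_m(\mathfrak{p})\,a_\ell(\phi)\ne 0$, so $k=m+\ell$. Rearranging gives $\deg\phi=k-m$ and $a_k(f)=a_m(\mathfrak{p})\cdot a_{k-m}(\phi)$, as required. The only subtlety worth flagging is the bookkeeping around the truncated equality: one must check that the leading term of $\mathfrak{p}\phi$ does not land in positive degrees where the two sides need not agree, and this is precisely what the inequality $m+\ell\le 0$ controls.
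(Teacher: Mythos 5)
Your proof is correct and follows essentially the same route as the paper's: use the integral-domain hypothesis to pin down the lowest-degree term of $\mathfrak{p}\phi$ as $a_m(\mathfrak{p})a_\ell(\phi)t^{m+\ell}$, observe that $m+\ell\le 0$ because otherwise $f$ would vanish in all non-positive degrees (contradicting $\deg f=k\le 0$), and then match coefficients at degree $k=m+\ell$. The paper's version is terser and leaves a couple of the bookkeeping steps (non-vanishing of $\mathfrak{p},\phi$ and the inequality $m+\ell\le 0$) implicit in the parenthetical "we have $\deg f\le 0$", while you spell them out; the underlying argument is the same.
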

\begin{proof}
As $R$ has no zero-divisors $\deg (\mf{p}\phi)=\deg \phi+\deg \mf{p}$,
and it has to equal $\deg f$ (even though the equality (\ref{eq:solve_eq})
is only true in non-positive degrees of $t$, we have $\deg f \le 0$).
Thus, comparing the coefficients of the minimal power of $t$ appearing
in the equation (\ref{eq:solve_eq}) we get
$a_{k}(\mf{p}\phi)=a_{m}(\mf{p})a_{k-m}(\mf{\phi})=a_k(f)$. 
\end{proof}

\begin{Lm}\label{lm:vn_lambda}
Let $k\ge 0$. %\nopagebreak[4]
\begin{enumerate} %\nopagebreak[4]
\item Let $i\ge 1$, then the following identity holds: \nopagebreak[4]
$$ \Phi_{(p-1)\deg(v_n^{k}v_{n+1}^i)-(p^{n+1}-1)}(v_n^{k} v_{n+1}^i)
 \equiv -v_n^{k} v_{n+1}^{i-1} \mod J_n(v_n^{k}).$$
\item Let $\lambda\in BP$, then for every $m<(p-1)\deg (v_n^{k})-(p^n-1)$ we have 
$$\Phi_{m}(v_n^{k} \lambda)\equiv v_n^{k} \Phi_{m-(p-1)\deg(v_n^{k})} (\lambda) 
\mod J_n(v_n^{k}).$$
\end{enumerate}
\end{Lm}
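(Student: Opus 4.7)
The plan is to extract the relevant coefficients of $\Phi$ directly from the defining equation $\lambda^p - St(\lambda) =^{t^{\le 0}} [p]\Phi(\lambda)$, reducing modulo $J_n(v_n^k)$ and carefully tracking leading terms in $t$. Lemma~\ref{lm:St_vn} together with multiplicativity of $St$ controls $St$ on monomials in $v_n,v_{n+1}$, while the standard $BP$-formula $[p](t)\equiv v_n t^{p^n}+_F v_{n+1} t^{p^{n+1}}+_F\cdots \pmod{I(n)}$ yields $[p]/t\equiv v_n t^{p^n-1}+v_{n+1} t^{p^{n+1}-1}+\cdots \pmod{J_n(v_n^k)}$. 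I would first prove Part (1) for $k=0$, then prove Part (2), and finally combine them to obtain Part (1) for general $k\ge 1$.

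Part (1) with $k=0$ is a direct application of Lemma~\ref{lm:solve_eq} in the integral domain $BP/I(n+1)$: multiplicativity of $St$ and Lemma~\ref{lm:St_vn} give $St(v_{n+1}^i)\equiv v_{n+1}^i t^{-i(p-1)(p^{n+1}-1)}\pmod{I(n+1)}$, and since $v_{n+1}$ is a non-zero divisor in $BP/I(n+1)$, the leading $t$-coefficient of $\Phi(v_{n+1}^i)$ is forced to be $-v_{n+1}^{i-1}$ at the prescribed degree. For general $k\ge 1$, the index $m=(p-1)\deg(v_n^k v_{n+1}^i)-(p^{n+1}-1)$ satisfies the threshold $m<(p-1)\deg v_n^k-(p^n-1)$ of Part (2) (direct check using $i\ge 1$), so applying Part (2) with $\lambda=v_{n+1}^i$, together with the $k=0$ case and the inclusion $v_n^k I(n+1)\subset J_n(v_n^k)$, yields $-v_n^k v_{n+1}^{i-1}$ as claimed.

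For Part (2), set $s:=(p-1)\deg v_n^k$ and $E:=\Phi(v_n^k\lambda)-v_n^k t^s\Phi(\lambda)$. Subtracting $v_n^k t^s$ times the defining equation for $\lambda$ from that for $v_n^k\lambda$ and reducing modulo $J_n(v_n^k)$ (using $(v_n^k\lambda)^p\equiv 0$ for $k\ge 1$ and $St(v_n^k\lambda)\equiv v_n^k t^s St(\lambda)$) yields $[p]\cdot E\equiv -v_n^k t^s\lambda^p\pmod{J_n(v_n^k)}$ in $t$-degrees $\le s$; since the right-hand side is a single term at $t^s$, every coefficient of $t^{m_0}$ in $[p]E$ vanishes modulo $J_n(v_n^k)$ for $m_0<s$. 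Decomposing via $BP/J_n(v_n^k)\cong (BP/I(n+1))[v_n]/v_n^{k+1}$, I would resolve these equations layer by layer in powers of $v_n$ by upward induction on $m$, the base case being automatic since $E$ is a polynomial in $t^{-1}$. The main obstacle is that $v_n$ is a zero divisor in $BP/J_n(v_n^k)$: the naive leading-term equation $v_n e_m\equiv 0$ coming from $v_n t^{p^n-1}$ in $[p]/t$ only forces $e_m$ into $v_n^k(BP/I(n+1))+J_n(v_n^k)$, which is strictly larger than $J_n(v_n^k)$, so closing the gap requires the higher-index terms $v_{n+1}t^{p^{n+1}-1},v_{n+2}t^{p^{n+2}-1},\ldots$ of $[p]/t$ (regular modulo $I(n+1)$, pinning down the $v_n^0$-layer) together with the FGL cross-terms $c_{ij}v_n^i v_{n+1}^j t^{(\ldots)}$ (which recursively determine the higher $v_n$-layers).
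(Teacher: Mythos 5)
Your Part~(1) with $k=0$ is fine and essentially reproduces the paper's equation~(\ref{eq:tmp}) (with $n$ replaced by $n+1$): in $BP/I(n+1)$, which is an integral domain, $St(v_{n+1}^i)\equiv v_{n+1}^i t^{-i(p-1)(p^{n+1}-1)}$ and Lemma~\ref{lm:solve_eq} immediately gives the leading coefficient of $\Phi(v_{n+1}^i)$. The reduction of Part~(1) for $k\ge 1$ to Part~(2) plus the $k=0$ case, using $v_n^k I(n+1)\subset J_n(v_n^k)$, also matches the paper. So the architecture is sound.

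The gap is in Part~(2), and it is a real one. You reduce modulo $J_n(v_n^k)=(I(n),v_n^{k+1})$ from the start and then try to extract coefficients of $[p]E\equiv -v_n^k t^s\lambda^p$ in $t$-degrees $\le s$. You correctly observe that this fails directly: modulo $J_n(v_n^k)$ the leading coefficient of $[p]/t$ is $v_n$, which is a zero-divisor, so $v_n\cdot a_e(E)\equiv 0$ only places $a_e(E)$ in $(I(n),v_n^k)\supsetneq J_n(v_n^k)$. Your proposed repair — a layer-by-layer induction over the $v_n$-adic filtration of $BP/J_n(v_n^k)\cong\mathbb F_p[v_n,v_{n+1},\ldots]/(v_n^{k+1})$ using the higher terms $v_{n+1}t^{p^{n+1}-1},\ldots$ and the formal-group cross-terms — is only a sketch, and it is far from clear that it closes: the cross-terms of the $p$-series modulo $J_n(v_n^k)$ mix the $v_n$-layers in a way that would need to be analyzed in detail, the equation $[p]E\equiv -v_n^k t^s\lambda^p$ is only available in degrees $\le s<0$ so the range of usable coefficient equations is limited, and you never exhibit the actual inductive step. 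As written, this is a plan to attempt a harder computation, not a proof.

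The paper sidesteps the zero-divisor issue entirely by never leaving the integral domain $BP/I(n)$. Instead of multiplying the defining equation for $\lambda$ by $v_n^k t^s$ (with $s=(p-1)\deg v_n^k<0$, which pushes the window of validity down to $t^{\le s}$ and keeps the constraint modulo $J_n(v_n^k)$), the paper multiplies the defining equation for $v_n^k\lambda$ by $t^{-s}=t^{k(p-1)(p^n-1)}$ and subtracts $v_n^k$ times the equation for $\lambda$. The $St$-terms cancel modulo $I(n)$, and — crucially — the term $v_n^{kp}\lambda^p t^{k(p-1)(p^n-1)}$ lands in positive $t$-degree, so it drops out of a $t^{\le 0}$-identity. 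What remains is $-v_n^k\lambda^p\equiv^{t^{\le 0}}[p]\bigl(\Phi(v_n^k\lambda)t^{-s}-v_n^k\Phi(\lambda)\bigr)\bmod I(n)$, whose left side is concentrated at $t^0$. Now Lemma~\ref{lm:solve_eq} applies in $BP/I(n)$ and yields, in one stroke, both Part~(2) (with the stronger modulus $I(n)\subset J_n(v_n^k)$) and equation~(\ref{eq:tmp}) by specializing $\lambda=1$. You should redo Part~(2) following this route: reduce modulo $I(n)$ only, multiply by $t^{-s}$ rather than by $t^s$, and let the positive-degree garbage disappear instead of trying to control it in the non-domain $BP/J_n(v_n^k)$.
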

\begin{proof}
Note that $J_n(v_n^k)=(I(n), v_n^{k+1})$.

Assume that $k\ge 1$. For specific values the equation (\ref{eq:symm_pt}) takes form
 $v_{n}^{kp}\lambda^p-St(v_{n})^k St(\lambda)=^{t^{\le 0}} [p]\Phi(v_{n}^k\lambda)$.
By Lemma~\ref{lm:St_vn} we have $St(v_n)\equiv v_n t^{-(p-1)(p^n-1)} \mod I(n)$.
Thus, the equation above can be rewritten as 
\begin{equation*}\tag{$\star$}
v_{n}^{kp}\lambda^p-v_n^k t^{-k(p-1)(p^n-1)}St(\lambda)\equiv^{t^{\le 0}} [p]\Phi(v_{n}^k\lambda) \mod I(n).
\end{equation*}

Multiply the equation  $\lambda^p-St(\lambda)=^{t^{\le 0}} [p]\Phi(\lambda)$
by $-v_n^k$,  and add to ($\star$) multiplied by $t^{k(p-1)(p^n-1)}$ to get 

\begin{equation*}
-v_n^k \lambda^p \equiv^{t^{\le 0}} [p](\Phi(v_n^k\lambda)t^{k(p-1)(p^n-1)}-v_n^k \Phi(\lambda)) \mod I(n),
\end{equation*}

where we have ignored a monomial of positive degree in $t$ in the left hand side.

It is easy to see that $\deg ([p] \mod I(n)) = p^n-1$,
and $a_{p^n-1}([p] \mod I(n))=v_n$.
We can now apply Lemma \ref{lm:solve_eq} as $I(n)$ is a prime ideal
and get that modulo $I(n)$ for $m<-k(p-1)(p^n-1)-(p^n-1)$ 
we have $\Phi_{m}(v_n^k\lambda)-v_n^k \Phi_{m+k(p-1)(p^n-1)}(\lambda)\equiv 0$,
and that $v_n\Phi_{-(p^n-1)-k(p-1)(p^n-1)}(v_n^k\lambda)-v_n^{k+1} \Phi_{-p^n-1}(\lambda)\equiv -v_n^k\lambda^p$.
The second part of the Lemma now follows as $I(n)\subset J_n(v_n^k)$, and if $k=0$ then the claim is trivial.

Also take $\lambda = 1$ in the latter equation to get that for $k\ge 1$

\begin{equation}\label{eq:tmp}
\Phi_{-k(p-1)(p^n-1)-(p^n-1)}(v_n^k)\equiv -v_n^{k-1} \mod I(n),
\end{equation}

as $\Phi(1)=0$ and $I(n)$ is a prime ideal. 
Now to obtain the first part of the Lemma (for $k\ge 0$)
notice that $(p-1)\deg(v_n^{k}v_{n+1}^i)-(p^{n+1}-1)<(p-1)\deg (v_n^{k})-(p^n-1)$,
and using the second part we get that
$$\Phi_{(p-1)\deg(v_n^{k}v_{n+1}^i)-(p^{n+1}-1)}(v_n^{k} v_{n+1}^i)
 \equiv v_n^{k} \Phi_{(p-1)\deg(v_{n+1}^i)-(p^{n+1}-1)}(v_{n+1}^i) \mod J_n(v_n^k).$$

It suffices to prove that 
$\Phi_{(p-1)\deg(v_{n+1}^i)-(p^{n+1}-1)}(v_{n+1}^i) \equiv -v_{n+1}^{i-1} \mod I(n+1)$,
as $v_n^kI(n+1)\subset J_n(v_n^k)$.
However, this is exactly the equation (\ref{eq:tmp})
 with $n$ substituted by $n+1$, and $k$ substituted by $i$.
\end{proof}

\begin{Prop}\label{prop:symm_linearity}
Let $n\ge 1$, Let $k_j\ge 0$ for $j:0\le j \le n-1$, $m\le 0$.

Denote by $u=\prod_{j=0}^{n-1} v_j^{k_j}$.

\begin{enumerate}
\item\label{item:divide} 
For any $i\ge 1$
 $$\Phi_{(p-1)\deg(uv_n^i)-(p^n-1)}(uv_n^i)\equiv - uv_n^{i-1} \mod J_{n-1}(u).$$
 
\item\label{item:linearity} 
If $m<(p-1)\deg(u)-(p^{n-1}-1)$, 
then for any $\lambda\in BP$ 
we have 
$$\Phi_{m}(u\lambda)\equiv u\Phi_{m-(p-1)\deg(u)}(\lambda) 
\mod J_{n-1}(u).$$
\end{enumerate}
\end{Prop}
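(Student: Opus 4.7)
The plan is to deduce part (2) first by an iterative peeling of the factors of $u$, each step being a single application of Lemma \ref{lm:vn_lambda}(2), and then to obtain part (1) by combining part (2) with Lemma \ref{lm:vn_lambda}(1). The ideal $J_{n-1}(u)$ is designed so that the cumulative error terms produced by the peeling land inside it.

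For part (2), I peel off the factors $v_0^{k_0}, v_1^{k_1}, \ldots, v_{n-1}^{k_{n-1}}$ of $u$ one at a time. Set $w_j = v_0^{k_0}\cdots v_{j-1}^{k_{j-1}}$ and $u_j = v_j^{k_j}\cdots v_{n-1}^{k_{n-1}}$, so that $u = w_j u_j$. I claim by induction on $j$ that
\[\Phi_m(u\lambda) \equiv w_j\, \Phi_{m-(p-1)\deg w_j}(u_j\lambda) \mod \tilde J_j,\]
where $\tilde J_j$ is the ideal generated by the first $j$ generators of $J_{n-1}(u)$, namely $(p^{k_0+1}, p^{k_0}v_1^{k_1+1}, \ldots, p^{k_0}v_1^{k_1}\cdots v_{j-2}^{k_{j-2}}v_{j-1}^{k_{j-1}+1})$. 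The base case $j=0$ is trivial, and $j=n$ gives precisely the desired conclusion (with $u_n=1$, $\tilde J_n=J_{n-1}(u)$, and $\deg w_n=\deg u$).

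For the inductive step $j\to j+1$, I apply Lemma \ref{lm:vn_lambda}(2) with its parameter equal to $j$ and $k=k_j$, to the element $\mu = u_{j+1}\lambda$, splitting off $v_j^{k_j}$ from $u_j$. Two verifications are required: the shifted index $m-(p-1)\deg w_j$ lies below the lemma's threshold $(p-1)\deg v_j^{k_j}-(p^j-1)$ (this reduces to $(p-1)\deg u_{j+1}\le p^{n-1}-p^j$, true because the left side is nonpositive and the right is nonnegative for $j\le n-1$); and the new error ideal $w_j(I(j), v_j^{k_j+1})$ is contained in $\tilde J_{j+1}$, since each of its generators has the form $p^{k_0}v_1^{k_1}\cdots v_{\ell-1}^{k_{\ell-1}}v_\ell^{k_\ell+1}$ times a monomial for some $\ell\le j$, matching a generator of $\tilde J_{j+1}$. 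The starting step $j=0$ uses Lemma \ref{lm:vn_lambda}(2) with parameter $0$ (so $v_0=p$); this case is not literally in its hypotheses but its proof goes through verbatim using $St(p^{k_0})=p^{k_0}$ and the fact that $BP/I(0)=BP$ is an integral domain to apply Lemma \ref{lm:solve_eq}.

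For part (1), first verify that $m=(p-1)\deg(uv_n^i)-(p^n-1)$ satisfies $m<(p-1)\deg(u)-(p^{n-1}-1)$, which is the clear inequality $(p^n-1)(i(p-1)+1) > p^{n-1}-1$. Then part (2) applied to $u\cdot v_n^i$ yields $\Phi_m(uv_n^i) \equiv u\,\Phi_{(p-1)\deg(v_n^i)-(p^n-1)}(v_n^i) \mod J_{n-1}(u)$. Lemma \ref{lm:vn_lambda}(1) with $k=0$ gives $\Phi_{(p-1)\deg(v_n^i)-(p^n-1)}(v_n^i) \equiv -v_n^{i-1} \mod I(n)$, and multiplying by $u$ introduces an error in $u\cdot I(n)$, which sits inside $J_{n-1}(u)$ since each $u\cdot v_\ell$ for $0\le\ell\le n-1$ is divisible by one of the generators of $J_{n-1}(u)$. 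I expect the main obstacle to be the ideal bookkeeping: $J_{n-1}(u)$ is neither principal nor equal to $I(n-1)$, so the whole scheme rests on the peeling producing at step $j$ exactly the pattern matching the $j$-th generator of $J_{n-1}(u)$.
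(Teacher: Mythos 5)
Your proof is correct and follows essentially the same strategy as the paper's: both peel the factors $v_0^{k_0},\dots,v_{n-1}^{k_{n-1}}$ one at a time via Lemma \ref{lm:vn_lambda}, tracking the errors in the ideal $J_{n-1}(u)$ via the chain $\tilde J_j \subset \tilde J_{j+1}$ and $w_j J_j(v_j^{k_j}) \subset \tilde J_{j+1}$; the only difference is that you first establish part~(2) in full by an explicit iteration and then deduce part~(1) from it, whereas the paper runs a single induction on $n$ carrying both claims simultaneously, with the $n=1$ base case treated separately (corresponding to your $j=0\to 1$ step, where additivity of $\Phi_{<0}$ replaces the appeal to Lemma \ref{lm:solve_eq}).
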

\begin{proof}

Let us prove the statement by induction on $n$.

{\bf Base of induction.} If $n=1$, then $u=p^{k_0}$, $J_0(u)=(p^{k_0+1})$,
and the second claim is obvious as $\Phi_{<0}$ is additive.
The first claim in this case is that 

$$ \Phi_{(p-1)\deg(v_1^i)-(p-1)}(p^{k_0}v_1^i)\equiv - p^{k_0} v_1^{i-1} \mod (p^{k_0+1}).$$

This follows from additivity of $\Phi_{<0}$ and the equation (\ref{eq:tmp}).

{\bf Induction step.}
Assume that for $u$ as in the assumptions of the proposition both claims (\ref{item:divide}), (\ref{item:linearity})
are true.
 Let $\tilde{u}=u v_n^{k_n}$, then $J_n(\tilde{u})=(J_{n-1}(u), uv_n^{k_n+1})$.

By induction assumption we know 
that $\Phi_{m}(u v_n^{k_n} \lambda)\equiv u\Phi_{m-(p-1)\deg(u)}(v_n^{k_n} \lambda) \mod J_{n-1}(u)$
for every $\lambda \in BP$ and every $m<(p-1)\deg(u)-(p^{n-1}-1)$.
Clearly, for any $i\ge 1$ we have $(p-1)\deg(uv_n^{k_n}v_{n+1}^i)-(p^{n+1}-1)< (p-1)\deg(u)-(p^{n-1}-1)$,
and therefore 
$$\Phi_{(p-1)\deg(\tilde{u}v_{n+1}^i)-(p^{n+1}-1)}(\tilde{u}v_{n+1}^i)\equiv 
u\Phi_{(p-1)\deg(v_n^{k_n}v_{n+1}^i)-(p^{n+1}-1)}(v_n^{k_n} v_{n+1}^i) \mod J_{n-1}(u).$$ 

However, by Lemma \ref{lm:vn_lambda}, (1) we have
 $\Phi_{(p-1)\deg(v_n^{k_n}v_{n+1}^i)-(p^{n+1}-1)}(v_n^{k_n} v_{n+1}^i) \equiv -v_n^{k_n} v_{n+1}^{i-1} \mod 
J_n(v_n^{k_n})$,
and the claim (\ref{item:divide}) is proved, as $u J_n(v_n^{k_n}) \subset J_n(\tilde{u})$
and $J_{n-1}(u)\subset J_n(\tilde{u})$.

To prove the claim (\ref{item:linearity})
it suffices to show that 
for  $m<(p-1)\deg(\tilde{u})-(p^n-1)$ we have 
$$\Phi_{m-(p-1)\deg(u)}(v_n^{k_n} \lambda)\equiv v_n^{k_n} 
\Phi_{m-(p-1)\deg(u)-(p-1)\deg(v_n^{k_n})} (\lambda) 
\mod J_n(v_n^{k_n}).$$

However, this is precisely the statement of Lemma \ref{lm:vn_lambda}, (2).
\end{proof}

\subsection{Structure of modules with the action of symmetric operations}\label{sec:structure}

In this section we give an algebraic description 
of a cyclic submodule of $BP^*_{(r)}$ with the action
of symmetric and Landweber-Novikov operations.
The results of previous section allow us to prove severe restrictions
on such modules. Later we apply these results to the structure of $BP^*_{(r)}$.

Recall that there is a surjective $BP$-linear map $\rho_{BP}:\CH^r(X)\ot BP \rarr BP^*_{(r)}$ 
for any smooth quasi-projective variety $X$.
Let $z$ be an element of $\CH^r(X)$
and denote by $x$ its image $\rho_{BP}(z)$ in $BP^r_{(r)}(X)$.
By Prop.~\ref{prop:LN_action_cobord_top},~\ref{prop:symm_action_bp}
 symmetric operations and Landweber-Novikov operations 
act on $x$ by formulas: 
$$ \Phi(\lambda x) =  \mathbf{i}^r\cdot t^{r(p-1)}\cdot \Phi_{\le -r(p-1)}(\lambda) x,
\qquad S^{tot}_{L-N}(\lambda x) = S^{tot}_{L-N}(\lambda) x. $$
Note that this action is linear in $x$, and the coefficient depends only on $r$ and not on $x$ or $X$.
Recall also that $\Phi_{<0}$ is additive so in the formula above $\Phi$ acts additively for $r>0$.

\begin{Def}\label{def:symm_module}
Let $M$ be a graded cyclic coherent $BP$-module generated by an element $x$ in degree $r>0$.
Denote by $I$ the annihilator of $x$ which is a finitely generated ideal in $BP$.
Suppose we are given two additive maps $\Phi_M:BP/I\rarr BP/I[t^{-1}]$ and $S_M:BP/I\rarr BP/I[b_1,b_2,\ldots]$.

We call $(M, \Phi_M, S_M)$ {\sl a  $\Phi$-module of degree $r$}
if for any $\lambda\in BP$  we have
$$ \Phi_M(\lambda) \equiv \mathbf{i}^r\cdot t^{r(p-1)}\cdot \Phi_{\le -r(p-1)}(\lambda) \mod I,
\qquad S_M(\lambda) \equiv S^{tot}_{L-N}(\lambda) \mod I.$$
\end{Def}

In other words, $\Phi_M$ and $S_M$ model an action of symmetric and Landweber-Novikov operations on $M$
and we will use notation $\Phi(\lambda x):=\Phi_M(\lambda)x$ and $S_{L-N}^{tot}(\lambda x)= S_M(\lambda) x$.
We will also often denote a $\Phi$-module $(M, \Phi_M, S_M)$ by $M$
since maps $\Phi_M, S_M$ are uniquely defined by the degree of the generator of $M$.

This definition is a rough approximation to a definition of a module over the
algebra of operations in $BP^*_{(r)}$ generated by symmetric and Landweber-Novikov operations.
This algebra has a complicated structure since one has to work with operations between different degrees 'separately'.
However, as we will not need to investigate relations between compositions of operations
we prefer an explicit definition~\ref{def:symm_module}. 
 
From now on we will use the collective term {\sl operations}
to denote symmetric and Landweber-Novikov operations. 
 
\begin{Lm}\label{lm:factor-phi-module}
Let  $(M, \Phi_M, S_M)$ be a $\Phi$-module of degree $r$, and assume that 
the submodule generated by an element $ux=:y\in M$
is invariant under operations. 

Denote by $I:=(x:BP\cdot y)=\{\lambda \in BP| \lambda x\in BP\cdot y\}$
the annihilator ideal of the generator 
of the module $M/BP\cdot y$.
Then $(M/BP\cdot y, \Phi_M \mathrm{\ mod\ } I, S_M \mathrm{\ mod\ } I)$ is a $\Phi$-module of degree $r$.
\end{Lm}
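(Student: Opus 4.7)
The strategy is to verify in turn the three pieces of Definition~\ref{def:symm_module} for the triple $(M/BP\cdot y,\ \Phi_M \bmod I,\ S_M \bmod I)$. Throughout I write $I_x$ for the annihilator of $x$ in $M$, so that $M\cong BP/I_x$ with $I_x$ finitely generated (by coherence of $M$), and $I$ for the ideal defined in the statement, i.e.\ the preimage of $BP\cdot y\subset M$ under the surjection $BP\twoheadrightarrow M$, $\lambda\mapsto \lambda x$.

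First I would confirm that $M/BP\cdot y$ is a graded cyclic coherent $BP$-module, generated by the class $\bar x$ of $x$ in degree $r$, with annihilator $I$. Cyclicity, grading, and the identification of the annihilator are built into the construction. The ideal $I$ is finitely generated since $I=I_x+(u)$: indeed the image of $I$ in $BP/I_x$ is the cyclic submodule $BP\cdot y$, generated by $u+I_x$. Coherence of the quotient follows from the standard fact that a quotient of a coherent module by a finitely generated submodule is again coherent (the argument sketched in the proof of Prop.~\ref{prop:finitely_presented} applies equally here).

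Next I would check that $\Phi_M\bmod I$ and $S_M\bmod I$ descend to well-defined additive maps $BP/I\to BP/I[t^{-1}]$ and $BP/I\to BP/I[b_1,b_2,\ldots]$. Additivity is inherited from $\Phi_M$ and $S_M$, so the content is to show that for $\lambda\in I$ the elements $\Phi_M(\lambda)$ and $S_M(\lambda)$ lie in $I[t^{-1}]$ and $I[b_1,b_2,\ldots]$ respectively, modulo $I_x$. This is the invariance hypothesis translated through the tautology that the action of operations on $M$ is given, by definition of a $\Phi$-module, by $\Phi(\lambda x)=\Phi_M(\lambda)\cdot x$ and $S^{tot}_{L-N}(\lambda x)=S_M(\lambda)\cdot x$: if $\lambda\in I$, then $\lambda x\in BP\cdot y$, so invariance of $BP\cdot y$ forces $\Phi_M(\lambda)\cdot x = \Phi(\lambda x)\in BP\cdot y[t^{-1}]$, and therefore every coefficient of $\Phi_M(\lambda)$ lies in $I$ modulo $I_x$; the Landweber--Novikov case is identical.

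Finally, the defining congruences
\[ \Phi_M(\lambda)\equiv \mathbf{i}^r\cdot t^{r(p-1)}\cdot \Phi_{\leq -r(p-1)}(\lambda)\pmod{I_x},\qquad S_M(\lambda)\equiv S^{tot}_{L-N}(\lambda)\pmod{I_x} \]
hold by assumption, hence remain valid modulo the larger ideal $I\supset I_x$. This gives the $\Phi$-module structure on $M/BP\cdot y$ in the same degree $r$.

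The only step that is not pure bookkeeping is the translation in the middle paragraph between submodule invariance of $BP\cdot y$ and the ideal-theoretic well-definedness of the operations on the quotient; but since the action is defined coefficient-wise via $\lambda x\mapsto \Phi_M(\lambda)\cdot x$, this unwinds directly and I expect no genuine obstacle.
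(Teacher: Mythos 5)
Your proposal is correct and takes essentially the same approach as the paper: coherence of the quotient is reduced to the fact that coherent modules form an abelian category, and the well-definedness of $\Phi_M \bmod I$ and $S_M \bmod I$ is exactly the translation of invariance of $BP\cdot y$ through the identity $\Phi(\mu x)=\Phi_M(\mu)\cdot x$, which is what the paper records as $\Phi(\mu x)\in BP[t^{-1}]\cdot y = I[t^{-1}]\cdot x$. You merely spell out a few more steps (the identification $I=I_x+(u)$, the preservation of the defining congruences modulo the larger ideal) that the paper leaves implicit.
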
 
\begin{proof}
The module $M/BP\cdot y$ is coherent since coherent modules form an abelian category.
We need to check that maps $\Phi_M$, $S_M$ factor through $I$.
Let $\mu\in I$, then $\Phi_M(\mu) \in (I \mod \Ann_M x)$ since $\Phi(\mu x)\in BP[t^{-1}] \cdot y= I[t^{-1}]\cdot x$.
 Similarly, for $S_M$.
\end{proof}
  
In general, for a smooth variety $X$ the $BP$-module $BP^*_{(r)}(X)$ can be infinitely generated,
but its finitely generated submodules are actually
glued from $\Phi$-modules of degree $r$.

\begin{Prop}\label{prop:fin-gen-BP-is-ext-symm-mod}
Let $r>0$ and let $Z\subset \CH^r(X)$ be a finitely generated group,
then the $BP$-module $M$ generated by $\rho(Z)$ in $BP^*_{(r)}(X)$ (the map $\rho$ is defined in Section \ref{sec:top_filt}) 
has a finite filtration s.t. its factors are $\Phi$-modules of degree $r$. 
\end{Prop}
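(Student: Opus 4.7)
The plan is to build the filtration of $M$ by successively dropping generators of $Z$. Choose a finite generating set $z_1,\ldots,z_k$ of $Z$ and write $x_i:=\rho_{BP}(z_i)\in BP^r_{(r)}(X)$, so that $M=\sum_{i=1}^k BP\cdot x_i$. Define
$$M_i:=\sum_{j\ge i} BP\cdot x_j,\qquad 1\le i\le k+1,$$
so $M=M_1\supset M_2\supset\ldots\supset M_{k+1}=0$. By construction each quotient $M_i/M_{i+1}$ is a cyclic $BP$-module generated by the class of $x_i$, which sits in degree $r$.

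First I would check the coherence hypothesis needed for Def.~\ref{def:symm_module}. Since $M$ is a finitely generated submodule of $BP^*_{(r)}(X)$, the ind-coherence statement of Prop.~\ref{prop:finitely_presented} gives that $M$ is a coherent $BP$-module. Coherent modules form an abelian subcategory, so the finitely generated submodules $M_i$ are coherent, and hence each quotient $M_i/M_{i+1}$ is coherent as well.

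Next I would verify that every $M_i$ is invariant under both symmetric and Landweber--Novikov operations. Because $r>0$, the symmetric operation $\Phi$ is additive on $BP^*_{(r)}$ (Prop.~\ref{prop:symm_action_bp} and the footnote preceding it), and Prop.~\ref{prop:symm_action_bp} yields
$$\Phi(\lambda x_j)=\mathbf{i}^r\,t^{r(p-1)}\,\Phi_{\le -r(p-1)}(\lambda)\cdot x_j\in BP\cdot x_j[t^{-1}].$$
Summing over $j\ge i$ shows $\Phi(M_i)\subset M_i[t^{-1}]$. By Prop.~\ref{prop:LN_action_cobord_top} the same cycle-wise argument applies to $S^{tot}_{L-N}$, giving $S^{tot}_{L-N}(M_i)\subset M_i\otimes_\LL\LL B$.

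The last and only slightly subtle step is to descend the operations to each factor $M_i/M_{i+1}$ and identify it as a $\Phi$-module of degree $r$ in the sense of Def.~\ref{def:symm_module}. Let $I_i:=\{\lambda\in BP\mid\lambda x_i\in M_{i+1}\}$ be the annihilator of the cyclic generator. For $\lambda\in I_i$, the invariance of $M_{i+1}$ under $\Phi$ gives
$$\mathbf{i}^r\,t^{r(p-1)}\,\Phi_{\le -r(p-1)}(\lambda)\cdot x_i=\Phi(\lambda x_i)\in M_{i+1}[t^{-1}];$$
comparing coefficients of $t$ and using that $\mathbf{i}\in\Zp^\times$, we obtain $\Phi_{\le -r(p-1)}(\lambda)\in I_i[t^{-1}]$. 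The analogous argument for $S^{tot}_{L-N}(\lambda)$ (which is already $BP$-linear in $x_i$) shows $S^{tot}_{L-N}(\lambda)\in I_i\otimes_\LL\LL B$. Hence the formulas
$$\Phi_{M_i/M_{i+1}}(\lambda):=\mathbf{i}^r\,t^{r(p-1)}\,\Phi_{\le -r(p-1)}(\lambda)\bmod I_i,\qquad S_{M_i/M_{i+1}}(\lambda):=S^{tot}_{L-N}(\lambda)\bmod I_i$$
define additive maps $BP/I_i\to BP/I_i[t^{-1}]$ and $BP/I_i\to (BP/I_i)[b_1,b_2,\ldots]$ satisfying the congruences of Def.~\ref{def:symm_module}. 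The main obstacle is really only this compatibility check; once invariance of each $M_{i+1}$ is established, additivity of $\Phi$ on $BP^*_{(r)}$ for $r>0$ and the explicit formulas of Props.~\ref{prop:LN_action_cobord_top} and~\ref{prop:symm_action_bp} force the cyclic subquotients to be $\Phi$-modules, completing the filtration.
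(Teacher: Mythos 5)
Your proof is correct and takes essentially the same route as the paper: filter $Z$ by the descending chain $Z_i=\langle z_i,\ldots,z_k\rangle$, observe that the $BP$-module generated by $\rho_{BP}(Z_i)$ is invariant under the operations (this is the paper's key observation "if $Z'\subset Z$ is a subgroup, then its image inside $BP^*_{(r)}(X)$ is invariant"), and conclude that the cyclic quotients are $\Phi$-modules. You are more explicit than the paper on two points the paper only asserts briefly: (i) that coherence of each $M_i/M_{i+1}$ follows from ind-coherence of $BP^*_{(r)}(X)$ (Prop.~\ref{prop:finitely_presented}) plus coherence of the ring $BP$, and (ii) the verification that the formulas of Props.~\ref{prop:LN_action_cobord_top} and~\ref{prop:symm_action_bp} descend through $I_i$, i.e.\ that $\Phi_{\le -r(p-1)}(I_i)\subset I_i[t^{-1}]$ and $S^{tot}_{L-N}(I_i)\subset I_i[b_1,b_2,\ldots]$, so that $\Phi_{M_i/M_{i+1}}$ and $S_{M_i/M_{i+1}}$ are well defined on $BP/I_i$. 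These are exactly the checks the paper condenses into "Clearly, this action satisfies Def.~\ref{def:symm_module}."
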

\begin{proof}
Note that if $Z'\subset Z$ is a subgroup, then its image inside $BP^*_{(r)}(X)$ 
is invariant with respect to  operations.

Being finitely generated the group $Z$ has a finite filtration,
s.t. its factors are cyclic. This implies,
that $M$ has a finite filtration, s.t. its factors are cyclic modules generated in degree $r$
with well-defined action of operations.
Clearly, this action satisfies Def. \ref{def:symm_module}.
The coherence follows from Prop. \ref{prop:finitely_presented}.
\end{proof}

Denote by $f(n)=\frac{p^n-1}{p-1}$ for $n>0$.

\begin{Prop}[cf. {\cite[Cor. 21.9]{BJW}}]\label{prop:degree_estimates}
Let $M$ be a $\Phi$-module of degree $r>0$ generated by $x$.

Then either $M\cong BP$, 
or there exist $n>0$ and a $BP$-submodule $BP\cdot y \subset M$ which is invariant
under operations, $\Ann(y) = I(n)$ and $\deg y \ge f(n)$.
\end{Prop}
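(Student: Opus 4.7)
The plan is to apply the structure theory of invariant ideals in $BP$ together with the linearity and non-linearity properties of symmetric operations established in the previous subsection. The first observation is that $I := \Ann(x)$ is automatically invariant under Landweber-Novikov operations: since $S_M : BP/I \to BP/I[b_1, b_2, \ldots]$ is well defined, $S^{tot}_{L-N}(I) \subset I[b_1, b_2, \ldots]$. By Theorems \ref{th:radical-invariant-ideal} and \ref{th:invariant_ideals}, the radical $\sqrt{I}$ is then either $0$ or $I(n_0)$ for some $n_0 \ge 1$. In the first case $I = 0$ (as $BP$ is an integral domain) and $M \cong BP$, so one may assume the second.

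In that case the plan is to carry out the iterative construction sketched in the section introduction. Set $u_0 := 1$ and $v_0 := p$; inductively define $k_j := \sup\{k \ge 0 : v_j^k u_j x \ne 0\}$ and $u_{j+1} := u_j v_j^{k_j}$ whenever $k_j < \infty$, stopping at the first $n$ with $k_n = \infty$. By construction $J_{j-1}(u_j) \cdot x = 0$ at every step reached, so in particular $I(n) \cdot y = 0$ for $y := u_n x$.

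Termination and the degree bound are simultaneous consequences of Proposition \ref{prop:symm_linearity}(1). If $k_j < \infty$, applying $\Phi_M$ to the non-linearity identity with $i = k_j + 1$ together with $u_j v_j^{k_j+1} x = 0$ and $J_{j-1}(u_j) x = 0$ would force $u_j v_j^{k_j} x = 0$, contradicting minimality, unless the relevant slice $m = (p-1)\deg(u_j v_j^{k_j+1}) - (p^j - 1)$ falls outside the window $m \le -r(p-1)$ captured by $\Phi_M$. Unwinding this inequality gives $\deg u_{j+1} + r \ge f(j+1)$ whenever $k_j < \infty$. Since $\deg v_j \le 0$ for all $j \ge 0$, one has $\deg u_{j+1} \le 0$, and hence $f(j+1) \le r$; this must fail for $j+1$ large, forcing termination. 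Applying the same inequality at $j = n-1$ yields $\deg y \ge f(n)$.

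It remains to show that $BP \cdot y$ is invariant and that $\Ann(y) = I(n)$. The bound $\deg y \ge f(n) > f(n-1)$ places us in the hypothesis of Proposition \ref{prop:symm_linearity}(2), so $\Phi_m(u_n \lambda) \equiv u_n \Phi_{m - (p-1)\deg u_n}(\lambda) \bmod J_{n-1}(u_n)$ holds for every slice $m \le -r(p-1)$; applied to $x$ this places $\Phi_M(\lambda y)$ inside $BP \cdot y [t^{-1}]$, while Proposition \ref{prop:lin_landweber-novikov} together with the multiplicativity of $S^{tot}_{L-N}$ gives the analogue for Landweber-Novikov. Hence $\Ann(y)$ is an invariant finitely generated ideal (finite generation via coherence, Proposition \ref{prop:finitely_presented}) containing $I(n)$ but missing every $v_n^i$; Corollary \ref{cr:radical} then identifies $\Ann(y) = I(n)$. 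I expect the main obstacle to be the careful tracking of slice indices and degree shifts when translating non-linearity and linearity over $BP$ into assertions about the $\Phi$-module action; once the bookkeeping is pinned down, termination, the degree bound, and invariance all fall out in parallel.
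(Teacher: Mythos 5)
Your proposal is correct and follows essentially the same strategy as the paper's proof: iteratively build $u = p^{k_0}v_1^{k_1}\cdots v_{n-1}^{k_{n-1}}$, use the non-linearity of $\Phi$ (Prop.~\ref{prop:symm_linearity}(1)) for the degree bound and termination, the linearity (Prop.~\ref{prop:symm_linearity}(2)) and Prop.~\ref{prop:lin_landweber-novikov} for invariance of $BP\cdot y$, and Cor.~\ref{cr:radical} to pin down $\Ann(y) = I(n)$. The only cosmetic differences are that you fold the degree/termination estimate directly into the iteration (the paper isolates it as Lemma~\ref{lm:v_n-annihilator-degree}) and handle the non-torsion base case via the radical argument rather than via Cor.~\ref{cr:radical} directly; the $j=0$ step doesn't literally fall under Prop.~\ref{prop:symm_linearity}(1) (stated for $n\ge 1$), but the resulting inequality $r\ge f(1)=1$ is trivially the hypothesis $r>0$, so this is harmless.
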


\begin{proof}
Let us construct  $u=p^{k_0}v_1^{k_1}\cdots v_{n-1}^{k_{n-1}}\in BP$ so that 
$\Ann (x)\supset J_{n-1}(u)$, 
$y:=ux\neq 0$, and $\deg y \ge f(n)$.

Note that $\Ann(x)$ is an invariant finitely generated ideal in $BP$,
and if $p^{k_0+1}x\neq 0$ for every $k_0\ge 0$, then by Cor. \ref{cr:radical} we have
$\Ann(x)=0$, and so $M\cong BP$ is a free module generated in degree $r>0$.
Thus, we may assume that $p^{k_0+1}x=0$ and take $y_0:=p^{k_0}x\neq 0$ for some $k_0\ge 0$.

We will continue by induction. 
Define $y_n$ as $v_n^{k_n} y_{n-1}$ such that $v_n y_n=0$,
and $y_n\neq 0$. If $v_n^{i} y_{n-1}\neq 0$ for any $i\ge 0$, then
the process stops and $y=y_{n-1}$.
Note that by construction $\Ann (x)\supset J_n(u_n)$ 
where $y_n= u_n x$.
The following lemma shows that this induction process terminates. 

\begin{Lm}\label{lm:v_n-annihilator-degree}
Let $n>0$ and assume that $u=p^{k_0}v_1^{k_1}\cdots v_{n-1}^{k_{n-1}}\in BP$ such that 
$\Ann (x)\supset J_{n-1}(u)$, 
$y:=ux\neq 0$ and $\deg y \le pf(n)$.

Then $v_{n}^iy \neq 0$ for all $i\ge 0$.
\end{Lm}
\begin{proof}
Assume the contrary, and choose $i\ge 1$ to be 
such that $v_n^{i-1}y\neq 0$, $v_n^i y=0$.

Let us apply the symmetric operation to $v_n^i y$, which would have to result in zero.
By the definition of a $\Phi$-module 
we have $\Phi(v_n^iy)=\Phi(uv_n^i x)= \mathbf{i}^r\cdot t^{r(p-1)}\cdot 
  \Phi_{\le -r(p-1)}(uv_n^i)\cdot x$.
By Prop. \ref{prop:symm_linearity}, (\ref{item:divide})
$\Phi_{(p-1)\deg(uv_n^i)-(p^n-1)}(u v_n^i) \equiv -uv_n^{i-1} \mod J_{n-1}(u)$.
Thus, if $-r(p-1)\ge (p-1)\deg (uv_n^i)-(p^n-1)$,
then $\Phi_{(p-1)(r+\deg(uv_n^i))-(p^n-1)}(v_n^iux) = \mathbf{i}^r v_n^{i-1}ux=\mathbf{i}^r v_n^{i-1}y$ as $J_{n-1}(u)x=0$.
If $v_n^iux=v_n^iy$ were 0, then $\Phi(v_n^iux)$ would be 0 which is a contradiction.

However, the inequality above can be rewritten 
as $(p-1)(r+\deg(uv_n^i))\le p^n-1$,
and it is satisfied for all $i\ge 1$
if it is satisfied for $i=1$. In this case this inequality is precisely
the assumption of the Lemma.
\end{proof}

Denote by $y=ux$ the final element of the process above, so that $v_n^i y\neq 0$ for all $i$.
Let us show that the $BP$-submodule generated by $y$ 
is stable under operations.
We have $\Phi(\lambda y)=x\cdot \mathbf{i}^r\cdot t^{r(p-1)}\cdot \Phi_{\le-r(p-1)}(u\lambda)$ 
for any $\lambda\in BP$.
To apply Prop. \ref{prop:symm_linearity}, \ref{item:linearity})
 we need that $-r(p-1)< (p-1)\deg (v_1^{k_1}\cdots v_{n-1}^{k_{n-1}})-(p^{n-1}-1)$.
This is $(p-1)\deg y> p^{n-1}-1$, or $\deg y>f(n-1)$.

As we have $v_{n-1}y=0$, then it follows from the proof of Lemma~\ref{lm:v_n-annihilator-degree}
that $\deg y >pf(n-1)$, i.e. $\deg y\ge f(n)=pf(n-1)+1$.
 Clearly, $f(n)-1>f(n-1)$, and therefore for $m\le -r(p-1)$ we have $\Phi_{m}(u\lambda)$ 
 equal to $u \Phi_{m-(p-1)\deg u}(\lambda)$ modulo the ideal $J_{n-1}(u)$ 
 by Prop. \ref{prop:symm_linearity}, \ref{item:linearity}).
 However, $J_{n-1}(u) \subset \Ann(x)$,
 and therefore $\Phi_l$ sends $\lambda y$ to $\lambda' y$ 
 for any $l\le 0$.

We want to show now that the submodule generated by $y$ is also invariant
 with respect to Landweber-Novikov operations.
We have $S_{L-N}^{tot}(\lambda y)=S_{L-N}^{tot}(\lambda)S_{L-N}^{tot}(u)x$.
However, $J_{n-1}(u)\subset \Ann(x)$ and by Prop.~\ref{prop:lin_landweber-novikov}
$S_{L-N}^{tot}(u)\equiv u \mod J_{n-1}(u)$, so that $S_{L-N}^{tot}(\lambda y)=S_{L-N}^{tot}(\lambda)y$.
This finishes the proof that a submodule generated by $y$ is invariant under operations.

It also follows that $\Ann(y)$ is an invariant ideal, s.t. $\Ann(y)\supset I(n)$, $v_{n}^i\notin \Ann(y)$ for all $i\ge 0$.
It is finitely generated, as $\Phi$-modules are coherent,
and by Cor. \ref{cr:radical} we obtain that $\Ann(y)=I(n)$.

\end{proof}

\begin{Prop}[cf. {\cite[Th. 21.12]{BJW}}]\label{prop:abs_filtration_BP}
Let $M$ be a $\Phi$-module of degree $r>0$.

Then either $M$ is a free module 
or there exist a finite filtration of $M$: $M=M^{0}\supset M^{1}\supset \cdots M^{s}=0$
by $BP$-modules
s.t. $M^{i}/M^{i+1}$ is isomorphic to a $BP$-module $BP/I(n_i)e_i$
where $\deg e_i\ge f(n_i)$.
\end{Prop}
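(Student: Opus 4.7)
\emph{The plan} is to iterate Proposition \ref{prop:degree_estimates} combined with Lemma \ref{lm:factor-phi-module}, peeling off invariant cyclic submodules of the required form from the bottom.

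Suppose $M$ is not free. Applying Prop.~\ref{prop:degree_estimates} to $M$ produces an invariant $BP$-submodule $BP \cdot y_1 \subset M$ with $\Ann(y_1) = I(n_1)$ and $\deg y_1 \geq f(n_1)$, canonically isomorphic to $BP/I(n_1) \cdot y_1$. By Lemma \ref{lm:factor-phi-module}, $M/(BP \cdot y_1)$ is again a $\Phi$-module of degree $r$. If it is zero, we are done; otherwise the annihilator of its cyclic generator strictly contains $\Ann(x) \neq 0$, so it is non-free and Prop.~\ref{prop:degree_estimates} applies again. Iterating and lifting back to $M$, we obtain an ascending chain of sub-$\Phi$-modules $0 = K_0 \subset K_1 \subset K_2 \subset \cdots \subset M$ with each factor $K_k / K_{k-1} \cong BP/I(n_k) \cdot y_k$ and $\deg y_k \geq f(n_k)$. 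Reversing indices yields the descending filtration of the statement.

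\emph{The main obstacle} is termination. The key observation is that at the $k$-th step the element produced by Prop.~\ref{prop:degree_estimates} has the form $y_k = u_k \cdot x_{k-1}$, where $x_{k-1}$ is the generator of $M/K_{k-1}$ (of degree $r$) and
\[
  u_k = p^{a_0} v_1^{a_1} \cdots v_{n_k - 1}^{a_{n_k - 1}}
\]
is the specific monomial constructed in the proof of that proposition. Since $y_k$ lifts to an element of degree at most $r$ in $M$, the inequality $f(n_k) \leq \deg y_k \leq r$ bounds $n_k$ by $\lceil \log_p((p-1)r + 1) \rceil$. The bound $\deg u_k = \deg y_k - r \geq 1 - r$ then bounds every exponent $a_i$ for $i \geq 1$, and a uniform bound on $a_0$ follows from coherence: since $M$ is non-free, the opening of the proof of Prop.~\ref{prop:degree_estimates} shows that some $p^{a+1}$ lies in $\Ann(x)$, and hence in every subsequent $\Ann(x_{k-1})$, forcing $a_0 \leq a$. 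Thus all $u_k$ lie in a single \emph{finite} set $\mathcal{U}$ depending only on $M$ and $r$. By construction $u_k \in \Ann(x_k) \setminus \Ann(x_{k-1})$, so the $u_k$'s at distinct steps are pairwise distinct, and the iteration must terminate in at most $|\mathcal{U}|$ steps, yielding the required finite filtration.
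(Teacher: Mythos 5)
Your iteration scheme---alternating Prop.~\ref{prop:degree_estimates} with Lemma~\ref{lm:factor-phi-module} and noting that the annihilator of the successive cyclic generators always contains $\Ann(x)\neq 0$, so the quotients never become free---is exactly the paper's. Where you diverge is in the termination argument, and your version is correct but genuinely different. The paper notes that at each step an element $u_k x$ of \emph{positive} degree (since $\deg y_k\ge f(n_k)\ge 1$) is newly killed, so the chain $K_0^{\ge 0}\subsetneq K_1^{\ge 0}\subsetneq\cdots$ is strictly increasing inside $M^{\ge 0}$, which is a finitely generated $\Zp$-module (as $M$ is cyclic over $BP$ with generator in degree $r$ and $BP^{\ge -r}$ is finitely generated over $\Zp$); noetherianity of $\Zp$ then forces the process to stop. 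You instead bound the data explicitly: from $\deg y_k\le r$ you bound $n_k$ and the exponents $a_i$, $i\ge 1$, and from the fact that some fixed power $p^{a+1}$ already kills $x$ (hence every $x_{k-1}$) you bound $a_0$, placing all $u_k$ in a finite set $\mathcal{U}$; pairwise distinctness, via $u_k\in\Ann(x_k)\setminus\Ann(x_{k-1})$ and the increasing chain of annihilators, then caps the number of steps at $|\mathcal{U}|$. Both arguments are short and valid; the paper's is slicker and more conceptual (pure noetherianity), while yours is more combinatorial and has the minor advantage of giving an explicit, if crude, upper bound on the length of the filtration. One small imprecision: the intermediate $K_k$ are operation-invariant submodules of $M$, not themselves $\Phi$-modules (they are not cyclic); this does not affect the argument.
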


\begin{proof}

Applying inductively Prop. \ref{prop:degree_estimates} and Lemma \ref{lm:factor-phi-module} 
we obtain such filtration, the only issue being its finiteness (and hence exhaustiveness).
To see this note, that after each step of the induction process 
an element of positive degree is killed,
and as $M^{\ge 0}$ is a finitely generated $\Zp$-module, 
the process stops after finite number of steps due to the noetherian property of $\Zp$.
\end{proof}

\begin{Cr}\label{cr:filtration_BP}
Let $X$ be a smooth variety,
then for any $r\ge 0$ the $BP$-module
$BP^*_{(r)}(X)$ is a union of finitely generated $BP$-modules
which have a filtration s.t. its factors 
 are cyclic modules $BP/I(n) x$
with the generator $x$ s.t. $\deg x\ge f(n)$.
(If $n=0$, i.e. $I(n)=(0)$, the claim is that $\deg x\ge 0$.)
\end{Cr}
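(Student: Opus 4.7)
The plan is to bootstrap from the analysis of a single $\Phi$-module (Prop. \ref{prop:abs_filtration_BP}) to the whole presheaf, by first exhausting $BP^*_{(r)}(X)$ by finitely generated submodules and then recognizing each of these as built out of $\Phi$-modules via Prop. \ref{prop:fin-gen-BP-is-ext-symm-mod}. The case $r>0$ is handled uniformly, while $r=0$ requires a short separate argument using generic constancy.

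First I would observe that, since $\rho_{BP}\colon \CH^r(X)\otimes BP\twoheadrightarrow BP^*_{(r)}(X)$ is surjective (Section \ref{sec:top_filt}), every finitely generated $BP$-submodule of $BP^*_{(r)}(X)$ is contained in a submodule of the form $M_Z:=BP\cdot\rho_{BP}(Z)$ for some finitely generated subgroup $Z\subset\CH^r(X)$. Hence $BP^*_{(r)}(X)=\bigcup_Z M_Z$, and it suffices to produce the required filtration on each $M_Z$.

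Assume $r>0$. Prop. \ref{prop:fin-gen-BP-is-ext-symm-mod} provides a finite filtration of $M_Z$ whose associated factors are $\Phi$-modules of degree $r$. Applying Prop. \ref{prop:abs_filtration_BP} to each such factor, we refine this into a finite filtration whose factors are either free modules $BP\cdot e$ with $\deg e=r\ge 0$ (the case $n=0$, with the convention $f(0)=0$) or cyclic modules $BP/I(n_i)\,e_i$ with $\deg e_i\ge f(n_i)$. Splicing the two filtrations produces the desired filtration of $M_Z$ and proves the corollary for $r>0$.

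For $r=0$, the $\Phi$-module machinery does not directly apply, but the statement is much easier here. Because the presheaf $\Omega^*$ is generically constant, the topological filtration splits in the first step (as used in the proof of Cor. \ref{cr:CH1_Vishik}), so $\Omega^*(X)=\LL\oplus\tau^1\Omega^*(X)$ and consequently $BP^*_{(0)}(X)$ is a free $BP$-module on one generator per connected component of $X$, all in degree $0$. It is therefore a union of finitely generated free $BP$-modules, which trivially admit the required filtration (with $n=0$ in each factor). The mild issue to check is simply that the free case really falls under the statement's convention, after which the proof is complete; this is the only point where any care beyond citing the previous propositions is needed.
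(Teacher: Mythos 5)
Your proof is correct and follows the same route the paper takes: the paper's proof is simply the citation of Prop.~\ref{prop:fin-gen-BP-is-ext-symm-mod} and Prop.~\ref{prop:abs_filtration_BP}, and you have spelled out the union argument via surjectivity of $\rho_{BP}$ and the trivial $r=0$ case that the paper leaves implicit. Nothing to change.
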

\begin{proof}
It follows from Prop. \ref{prop:fin-gen-BP-is-ext-symm-mod} and Prop. \ref{prop:abs_filtration_BP}.
\end{proof}

\subsection{Integral restrictions to the structure of cobordism}\label{sec:filtration_integral}

Let us now use Landweber's results 
on the structure of $(\LL,\LL B)$-comodules (Th. \ref{th:landweber_filtration})
to state the integral version of Cor. \ref{cr:filtration_BP}.

\begin{Th}\label{th:filtration}
Let $X$ be a smooth quasi-projective variety over a field $F$.
Then for any $r\ge 0$ the $\LL$-module $\Omega^*_{(r)}(X)$
is a union of $\LL$-modules
which have a filtration, s.t. its factors are
of the form $\LL e_j$, $\deg e_j\ge 0$,  or $\LL/I(p_i,n_i)\cdot e_i$ where $p_i$ is a prime number,
$n_i\ge 1$, and $\deg e_i \ge \frac{p^{n_i}-1}{p-1}$.
\end{Th}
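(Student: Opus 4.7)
My plan is to bootstrap from the $p$-local $BP$-analogue (Corollary \ref{cr:filtration_BP}) to the integral statement via Landweber's structural theorem (Theorem \ref{th:landweber_filtration}) and Proposition \ref{prop:BP_Omega_algebraic}. First I would reduce to the finitely presented case: by Proposition \ref{prop:cobord_comodule}, $\Omega^*_{(r)}(X)$ carries a $(\LL, \LL B)$-comodule structure, and by Proposition \ref{prop:finitely_presented} it is ind-coherent, hence a filtered union of its finitely presented $(\LL, \LL B)$-subcomodules; it suffices to exhibit the required filtration for each such $N$. Landweber's Theorem \ref{th:landweber_filtration} then provides a finite filtration $N = N_0 \supset N_1 \supset \cdots \supset N_s = 0$ by subcomodules whose graded factors are, up to shift, of the form $\LL$ or $\LL/I(p_i, n_i)$; this gives the correct \emph{shape} of the filtration, so only the degree bounds $\deg e_j \ge 0$ for free factors and $\deg e_i \ge \frac{p_i^{n_i}-1}{p_i-1}$ for torsion factors remain to be verified.

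For the degree bounds I would localize at each prime separately. Applying $-\otimes_\LL \LL_{(p)}$ to the filtration kills torsion factors with $p_i \ne p$ and leaves the remaining factors with unchanged generator degrees. Since the map $BP \hookrightarrow \LL_{(p)}$ is a flat polynomial extension sending $I(n) \mapsto I(p,n)$, Proposition \ref{prop:BP_Omega_algebraic} identifies this localized filtration with the base-change of a $(BP, BP_*BP)$-comodule filtration of $N_{BP} := N \otimes_\LL BP$ of the same generator degrees. On the other hand, the inclusion $N \subset \Omega^*_{(r)}(X)$ induces a map $N_{BP} \to BP^*_{(r)}(X)$ realising the image of $N_{BP}$ as a finitely generated $BP$-submodule of $BP^*_{(r)}(X)$, which by Corollary \ref{cr:filtration_BP} lies inside a $BP$-module admitting a filtration with factors $BP e$ ($\deg e \ge 0$) or $BP/I(n) e$ ($\deg e \ge f(n)$).

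The main obstacle is the final gluing: transferring the degree bounds of the filtration of $N_{BP}$ coming from $BP^*_{(r)}(X)$ to a filtration of the integral $N$. My strategy is to construct the integral Landweber filtration of $N$ inductively from the bottom, rather than comparing two \emph{a priori} distinct Landweber filtrations (which may have different factor degrees). At each stage, one locates an integral element $e \in N$ whose annihilator is of the form $I(p, n)$ and whose degree is at least $f(n)$, by running the constructive argument of Proposition \ref{prop:degree_estimates} inside the $\Phi$-module associated to the image of a suitable cyclic submodule of $N$ in $BP^*_{(r)}(X)$; invariance of $\LL \cdot e$ under Landweber--Novikov operations is then ensured by Proposition \ref{prop:lin_landweber-novikov}, and termination follows from ind-coherence of the successive quotients. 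The key difficulty I foresee is showing that an element found via the $BP$-picture at some prime $p$ genuinely lifts to an integral element of $N$ with the precise \emph{integral} annihilator $I(p, n)$; this will require combining Landweber's classification of invariant primes (Theorem \ref{th:invariant_ideals} together with Corollary \ref{cr:radical}) with the constructive $\Phi$-module analysis of Section \ref{sec:structure}.
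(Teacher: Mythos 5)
Your proposal correctly assembles the right building blocks (Prop.~\ref{prop:cobord_comodule}, Prop.~\ref{prop:finitely_presented}, Prop.~\ref{prop:BP_Omega_algebraic}, Cor.~\ref{cr:filtration_BP} and the $\Phi$-module analysis), but it has a genuine gap precisely where you say ``the key difficulty I foresee.'' You never actually resolve the gluing problem: starting from an arbitrary finitely presented $(\LL, \LL B)$-subcomodule $N$, the generator you would produce via Prop.~\ref{prop:degree_estimates} only carries information about its $BP$-annihilator at a single prime $p$; if $N$ has mixed torsion there is no reason for the integral $\LL$-annihilator of that element to be exactly $I(p,n)$, and ``constructing the filtration from the bottom'' doesn't by itself separate the primes. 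You also lean on Landweber's Th.~\ref{th:landweber_filtration} to get the shape of the filtration and then try to retrofit degree bounds onto it, which is exactly the comparison-of-two-filtrations problem you correctly flag as dangerous.

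The paper's proof avoids this entirely by choosing a different, sharper starting filtration. Rather than taking an arbitrary finitely presented subcomodule, one takes a finitely generated subgroup $Z \subset \CH^r(X)$, sets $M := \LL \cdot \rho(Z)$, and filters $Z$ as an abelian group with cyclic graded pieces $\ZZ$ or $\ZZ/p_j$ (one prime at a time). This induces a finite filtration of $M$ by subcomodules (invariance holds by Prop.~\ref{prop:LN_action_cobord_top}) whose $j$-th graded piece $N = M_j/M_{j+1}$ is a \emph{cyclic} $\LL$-module whose generator is either torsion-free or killed by a \emph{single} prime $p$. In the torsion-free case $N \cong \LL$ by Th.~\ref{th:radical-invariant-ideal}; in the $p$-torsion case $N$ is automatically an $\LL_{(p)}$-module, so Prop.~\ref{prop:BP_Omega_algebraic} applies on the nose, $N \ot_\LL BP$ is a $\Phi$-module mapping to $BP^*_{(r)}(X)$, Prop.~\ref{prop:abs_filtration_BP} gives the filtration with correct degrees, and flat base change along $BP \to \LL_{(p)}$ pulls it back to $N$ with $I(n) \ot_{BP} \LL_{(p)} = I(p,n)$. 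No cross-prime gluing is needed and Th.~\ref{th:landweber_filtration} is never invoked. The missing idea in your proposal is precisely this reduction to cyclic factors with single-prime torsion via the abelian-group filtration of $Z$; without it, the final step of your argument remains unproved.
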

\begin{proof}
For $r=0$ the module $\Omega^*_{(r)}(X)$ is free of rank 1, thus, the claim is satisfied,
and in what follows we assume that $r>0$.

Let $Z$ be a finitely generated subgroup of $\CH^r(X)$,
and denote by $M$ the graded $\LL$-module that is generated by $\rho(Z)$.
Abelian group $Z$ has a filtration $Z=Z_0\supset Z_1 \supset \cdots Z_s=0$ 
s.t. its factors are either $\ZZ$ or $\ZZ/p_j$ for a prime $p_j$,
and therefore the module $M$ has a filtration $M=M_0\supset M_1\supset \cdots \supset M_s=0$
s.t. $M_j/M_{j+1}$ is a cyclic $\LL$-module generated by the element $x_j$ 
s.t. $x_j$ is either non-torsion or $p_j x_j=0$
for a prime $p_j$.

Note that the filtration is invariant under Landweber-Novikov operations
 due to Prop.~\ref{prop:LN_action_cobord_top}, and thus 
 $M_j/M_{j+1}$ is a $(\LL,\LL B)$-comodule, which is also coherent by Prop. \ref{prop:finitely_presented}
and Rem. \ref{rem:indcoh_abelian_cat}.

Let us show that the module $N:=M_j/M_{j+1}$ has a filtration as claimed in the theorem.
If $x_j$ is non-torsion, then this module is isomorphic to $\LL$
as the annihilator of $x_j$ has to be an invariant ideal in $\LL$ and
its radical (if not equal to 0) has to contain a prime number $p$
by Th.~\ref{th:radical-invariant-ideal} which is a contradiction.
So we assume that $p x_j=0$ for some prime $p$.
Note that it makes $N$ a $\LL_{(p)}$-module.
It follows from Prop. \ref{prop:BP_Omega_algebraic} 
%and finiteness conditions provided by Section \ref{sec:indcoh},
that $N\cong (N\ot_\LL BP)\ot_{BP} \LL_{(p)}$.

We claim that the module $N\ot_\LL BP$ is a $\Phi$-module generated in degree $r$.
Indeed, $M\ot_\LL BP$ is a submodule of $BP^*_{(r)}(X)$
because $BP$ is a Landweber-exact module over $\LL$,
and moreover it is generated by $\rho_{BP}(Z)$ as the following diagram commutes:
\begin{diagram}
\CH^r(X) &\rTo^{\rho} &\Omega^*_{(r)}(X)\\
& \rdTo_{\rho_{BP}} &\dTo \\
& & BP^*_{(r)}(X)
\end{diagram}
The induced filtration on $M\ot_\LL BP$ then 
equals to the filtration given by the $\rho_{BP}$-image of the filtration
on $Z$. This filtration is clearly invariant under all operations,
and therefore its factors being cyclic are $\Phi$-modules.
Therefore by Prop. \ref{prop:abs_filtration_BP} it has an expected filtration.

The map $BP\rarr \LL_{(p)}$ is a flat map,
so that the graded factors of the induced filtration on $N$
are of the form $BP/I(n)\ot_{BP} \LL_{(p)}$. 
As $I(n)\ot_{BP} \LL_{(p)}=I(p,n)$ this gives an expected filtration on 
$N=M_j/M_{j+1}$ 
and therefore $M$ as well.
\end{proof}

\section{Algebraic cobordism and BP-theory of varieties of small dimension}\label{sec:cobord_surface}

Vishik has used Theorem \ref{th:relations_positive} to show
that for a smooth curve $C$ there is an isomorphism $\Omega^*(C)=\LL\oplus \CH^1(C)\ot\LL$
(\cite[Th. 4.4]{Vish_Lazard}).
Here we continue this line of results with the case of algebraic cobordism of a smooth surface
and $BP$-theory of a smooth variety of dimension not greater than $p$.

To state the results we will need the following Lemma and Definition.
Let $A, B$ be abelian groups, and let $L=\oplus_{i\le 0} L^i$ be a graded ring which is flat over $\ZZ$.
For an abelian  group $X$ denote by $X\ot L(i)$
 the graded $L$-module generated freely by $X$ in degree $i$.

\begin{Lm}\label{lm:ext_specified}
$\Ext^1_{L^\bullet}(A\ot L(i), B\ot L(j))=\Ext^1_{\ZZ}(A,B\otimes L^{i-j})$.
\end{Lm}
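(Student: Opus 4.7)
The plan is to exploit the adjunction between the "free graded module" functor and the "degree $i$ piece" functor, combined with flatness of $L$ over $\ZZ$.

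First, I would record the key adjunction: for any graded $L$-module $M$,
\[ \Hom_{L^\bullet}(A\ot L(i), M) = \Hom_\ZZ(A, M^i), \]
which is immediate from the fact that $A\ot L(i)$ is freely generated by $A$ in degree $i$. Applied to $M = B\ot L(j)$, whose degree-$n$ piece is $B\ot_\ZZ L^{n-j}$, this gives
\[ \Hom_{L^\bullet}(A\ot L(i), B\ot L(j)) = \Hom_\ZZ(A, B\ot_\ZZ L^{i-j}). \]

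Next, pick a free resolution of $A$ over $\ZZ$: $0\to F_1\to F_0\to A\to 0$ (which exists and has length one, since $\ZZ$ has global dimension one). Because $L$ is flat over $\ZZ$, the functor $(-)\ot L(i)$ is exact, so
\[ 0\to F_1\ot L(i) \to F_0\ot L(i) \to A\ot L(i)\to 0 \]
is a resolution of $A\ot L(i)$ by free graded $L$-modules.

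Applying $\Hom_{L^\bullet}(-, B\ot L(j))$ to this resolution and invoking the adjunction termwise turns the complex into
\[ \Hom_\ZZ(F_0, B\ot L^{i-j}) \to \Hom_\ZZ(F_1, B\ot L^{i-j}), \]
whose cokernel computes both $\Ext^1_{L^\bullet}(A\ot L(i),B\ot L(j))$ (by definition of Ext from the free resolution on the left) and $\Ext^1_\ZZ(A, B\ot L^{i-j})$ (since $F_\bullet$ is a free $\ZZ$-resolution of $A$). This yields the desired identification.

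No real obstacle arises; the only point to be careful about is that $\Hom_{L^\bullet}$ denotes homomorphisms of graded $L$-modules (of degree zero), so that the adjunction uses the degree-$i$ component $M^i$ rather than the full module, and that flatness of $L/\ZZ$ is genuinely needed to turn a $\ZZ$-resolution into an $L$-resolution.
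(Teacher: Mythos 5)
Your proof is correct and is essentially the same as the paper's: the paper's one-line argument is precisely to take a free resolution of $A$ by abelian groups, tensor it up to a free graded $L$-resolution of $A\ot L(i)$ (using flatness of $L$ over $\ZZ$), and compute $\Ext$ from it; you have simply spelled out the free--forgetful adjunction that identifies the resulting complex with the one computing $\Ext^1_\ZZ(A, B\ot L^{i-j})$.
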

\begin{proof}
The $L$-module $A\ot L(i)$ has a free resolution obtained from a free resolution 
of $A$ by abelian groups. Calculating the $\Ext$-groups using this resolution yields the claim.
\end{proof}

\begin{Def}\label{def:ext_specified}
Let $C$ be a graded $L$-module which is
 an extension of a module $A\ot L(i)$ by $B\ot L(j)$ where $j\le i$.
Let $v$ be an element $L^{i-j}$. 
We will say that $C$ is {\sl specified by an extension in $\Ext^1(A,B)$ via $v$},
 if the extension defined by $C$ in $\Ext^1(A,B\ot L^{i-j})$ via Lemma \ref{lm:ext_specified}) 
 comes from the image of the map $\Ext^1(A,B)\rarr \Ext^1(A,B\ot L^{i-j})$ defined by $v$.
\end{Def}

\subsection{Algebraic cobordism of a surface}

Recall that for any smooth variety $X$ we have a decomposition
of $\LL$-modules $\Omega^*=\LL\cdot 1_X\oplus \tau^1\Omega^*$,
and therefore a description of $\tau^1\Omega^*$ as a $\LL$-module
yields a description of $\Omega^*$ as a $\LL$-module.

\begin{Th}\label{th:cobord_surface}
Let $S$ be a smooth surface.
Then there exists the following exact sequence of $\LL$-modules
$$ 0\rarr \CH^2(S)\ot\LL(2)\rarr \tau^1\Omega^*(S)\rarr \CH^1(S)\ot\LL(1)\rarr 0,$$
and this extension of $\LL$-modules 
is specified by an extension of abelian groups via $\beta \in \LL^{-1}$
$$ 0\rarr \CH^2(S)\cong \tau^2 K_0(S)\rarr \tau^1 K_0(S)\rarr gr_\tau^1 K_0(S)\cong \CH^1(S)\rarr 0.$$
\end{Th}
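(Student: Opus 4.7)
The plan is to exploit the fact that on a surface the topological filtration terminates at codimension $2$, identify the graded pieces using Corollary \ref{cr:CH1_Vishik} and Theorem \ref{th:filtration}, and then pin down the extension class via the multiplicative operation to $K$-theory.

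Since $\dim S = 2$, no subvariety has codimension $\ge 3$, so $\tau^3 \Omega^*(S) = 0$ and $\tau^2 \Omega^*(S) = \Omega^*_{(2)}(S)$. The short exact sequence
\[
0 \to \Omega^*_{(2)}(S) \to \tau^1 \Omega^*(S) \to \Omega^*_{(1)}(S) \to 0
\]
then follows from the definition of the topological filtration. The right-hand identification $\Omega^*_{(1)}(S) \cong \CH^1(S) \ot \LL(1)$ is given immediately by Corollary \ref{cr:CH1_Vishik}. For the left-hand identification, the Levine--Morel surjection $\rho : \CH^2(S) \ot \LL(2) \twoheadrightarrow \Omega^*_{(2)}(S)$ has generators concentrated in degree $2$, and I would show it is an isomorphism using Theorem \ref{th:filtration}: any finitely generated submodule of $\Omega^*_{(2)}(S)$ admits a filtration with factors $\LL e_j$ or $\LL/I(p,n) e_i$, where $\deg e_i \ge \frac{p^n-1}{p-1}$ in the latter case. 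The degree constraint $\deg e_i \le 2$ forces $n = 1$, so only $\LL$ and $\LL/(p)$ factors appear, matching the possible structure of $\CH^2(S) \ot \LL$. Combined with the injectivity of $\rho$ in degree $2$ (via the cycle-class retraction $\Omega^2_{(2)}(S) \to \CH^2(S)$ coming from $\Omega^* \to \CH^*$), this should identify $\rho$ as an isomorphism of $\LL$-modules.

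For the extension class, I would apply the multiplicative operation $\Omega^* \to K_0[\beta, \beta^{-1}]$ classifying the multiplicative formal group law, produced by Proposition \ref{prop:mult_op_from_cobord}. This operation is $\LL$-linear through the map $\LL \to \ZZ[\beta, \beta^{-1}]$ sending $\beta_\LL \in \LL^{-1}$ to $\beta$, and it preserves the topological filtration. On $S$, the topological filtration on $K$-theory satisfies $\tau^2 K_0(S) \cong \CH^2(S)$ and $\tau^1 K_0(S)/\tau^2 K_0(S) \cong \CH^1(S)$ by classical Riemann--Roch, giving the abelian-group extension displayed in the theorem. Base-changing our $\LL$-module sequence along $\LL \to \ZZ[\beta,\beta^{-1}]$ yields the $K$-theory sequence tensored with $\ZZ[\beta,\beta^{-1}]$, and the naturality of this base change, combined with Lemma \ref{lm:ext_specified}, should identify the $\LL$-module extension class as the pushforward of the $K$-theory extension under multiplication by $\beta \in \LL^{-1}$, matching Definition \ref{def:ext_specified}.

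The main obstacle will be the careful treatment of higher torsion in $\CH^2(S)$: if $\CH^2(S)$ contains an element of order $p^k$ with $k \ge 2$, then the corresponding cyclic summand of $\CH^2(S) \ot \LL(2)$ is an $\LL/(p^k)$-piece not directly visible among the $\LL/(p)$ building blocks of Theorem \ref{th:filtration}. One must verify that the filtration pieces in $\Omega^*_{(2)}(S)$ assemble via nontrivial extensions of $\LL/(p)$ to give exactly $\LL/(p^k)$ and not some coarser structure. Pinpointing these extensions most naturally proceeds through $BP$-localization and the Landweber--Novikov comodule structure, which is the technical heart of the argument.
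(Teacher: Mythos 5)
Your plan is broadly in the right spirit but takes a genuinely different route from the paper, and the one real gap in it is not the one you identify.

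\textbf{Comparison with the paper.} The paper does not analyze $\Omega^*_{(2)}(S)$ directly. It first uses the bounded-degree presentation from the proof of Prop.~\ref{prop:finitely_presented} to show that $\Omega^*(S)\cong CK_0^*(S)\ot_{\ZZ[\beta]}\LL$ (with $CK_0^*$ the connective $K$-theory, a $\ZZ[\beta]$-module), then proves the key isomorphism $\CH^i(S)\ot\ZZ[\beta]\xrarr{\sim}\oplus_n CK^n_{0,(i)}(S)$ (Lemma~\ref{lm:CH_CK0}), and finally obtains the $K_0$-extension by specializing $\beta=1$, with a $\Tor_1$-vanishing observation. You instead work directly at the level of $\LL$-modules with Th.~\ref{th:filtration}, and you obtain the $K_0$ description via the multiplicative operation $\Omega^*\rarr K_0[\beta,\beta^{-1}]$. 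Your route is viable and, in one respect, cleaner: Th.~\ref{th:filtration} is integral, so you can hope to avoid the separate treatment of $p=2$ (via $\Phi$-modules) and $p\ne 2$ (via the polynomial-ring structure $\Omega^*\ot\Zp\cong BP^*[t_j]$) that appears in the paper's Lemma~\ref{lm:CH_CK0}. The paper's $CK_0^*$ reformulation buys a more automatic identification of the extension class, because $\tau^1\Omega^*(S)$ is literally $\tau^1 CK_0^*(S)\ot_{\ZZ[\beta]}\LL$, so the $\LL$-extension is visibly a base change from a $\ZZ[\beta]$-extension; on your route you must separately verify (via a Lemma~\ref{lm:ext_specified}-type computation) that base change along $\LL\rarr\ZZ[\beta,\beta^{-1}]$ and then specialization at $\beta=1$ give isomorphisms on the relevant $\Ext^1$'s, all of which are canonically $\Ext^1_\ZZ(\CH^1(S),\CH^2(S))$. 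This works, but it is extra bookkeeping.

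\textbf{The gap, and why your stated obstacle is not the real one.} The step ``only $\LL$ and $\LL/(p)$ factors appear, matching the possible structure of $\CH^2(S)\ot\LL$; combined with injectivity of $\rho$ in degree $2$, this should identify $\rho$ as an isomorphism'' is incomplete as stated: knowing that $\Omega^*_{(2)}(S)$ has a filtration of the same abstract shape as a filtration of $\CH^2(S)\ot\LL$ does not by itself show a given surjection between them is injective. You then worry that the hard point is reassembling the $\LL/(p)$-factors into $\LL/(p^k)$'s, and propose handling this by $BP$-localization; but this concern is a red herring, and the $BP$-splitting is not what closes the gap. The correct (and short) argument is the following. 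Each filtration factor $\LL\cong\ZZ[x_1,x_2,\ldots]$ and $\LL/(p)\cong\mathbb{F}_p[x_1,x_2,\ldots]$ is a domain in which $\beta=x_1$ is a nonzerodivisor. Hence $\beta$ acts injectively on every filtered piece, therefore on all of $\Omega^*_{(2)}(S)$. On the other hand, $\ker\rho$ sits in degrees $\le 1$ (since $\rho$ is split injective in degree $2$) and, by Th.~\ref{th:relations_positive} (or, more precisely, the induced computation $\ker\rho\ot_\LL\ZZ\cong\Tor_1^\LL(\Omega^*_{(2)}(S),\ZZ)$), is generated in degrees $\ge 1$. So $\ker\rho$ is generated by its degree-$1$ part, which lies in $\CH^2(S)\cdot\beta$; but a nonzero $z\ot\beta\in\ker\rho$ would give a nonzero $\beta$-torsion element $\rho(z)\in\Omega^2_{(2)}(S)$, contradicting $\beta$-injectivity. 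This closes the gap uniformly for all primes without any discussion of extensions among $\LL/(p)$-factors. Once this is in place, the rest of your plan (identification of $\Omega^*_{(1)}$ by Cor.~\ref{cr:CH1_Vishik}, comparison of extensions via $K_0[\beta,\beta^{-1}]$ and Lemma~\ref{lm:ext_specified}/Def.~\ref{def:ext_specified}) goes through, modulo the $\Ext$-naturality bookkeeping noted above.
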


\begin{proof} 
As explained in the proof of Prop. \ref{prop:finitely_presented},
$\Omega^*(S)$ as $\LL$-module is free with respect to variables of degree less than -1,
i.e. $\Omega^*(S)=CK_0^*(S)\ot_{\ZZ[\beta]} \LL$,
where $CK_0^*$ is the free theory defined by the multiplicative
formal group law over the ring $\ZZ[\beta]$, $\deg \beta =-1$, and the map
$\ZZ[\beta]\rarr \LL$ is the section of the classifying map
and sends $\beta$ to the generator of $\LL^{-1}$.
The theory $CK_0^*$ is called the {\sl connective K-theory}.

Note that the canonical map $CK_0^2(S)\rarr \CH^2(S)$ is an isomorphism,
as its kernel has to be divisible by $\beta$, which is impossible due to the dimensional reasons.

\begin{Lm}\label{lm:CH_CK0}
Let $Y$ be a smooth variety.
The map $\rho_{CK_0}:\CH^i(Y)\ot\ZZ[\beta] \rarr \oplus_{n\le i} CK^n_{0,(i)}(Y)$ is an isomorphism
for $i\colon 0\le i\le 2$.
\end{Lm}
\begin{proof}
If $i=0$, then there is a canonical section.

If $i=1$, the statement is a corollary of Vishik's theorem (Cor. \ref{cr:CH1_Vishik}).

If $i=2$, as the map $\rho_{CK_0}$ is always surjective,
 it suffices to prove that for any non-zero element $x\in\CH^2(Y)$ elements 
$\beta^i \rho_{CK_0}(x)$ are not zero for any $i\ge 0$.  Assume the contrary. 

We claim that $x$ is 2-primary torsion. 
Since every two formal group laws over the same $\QQ$-algebra are strictly isomorphic,
it follows that every two free theory with torsion-free coefficients become multiplicatively
isomorphic after a change of coefficient to some $\QQ$-algebra.
Using this one can show that the kernel of the map $\rho$ is torsion,
and therefore $x$ is torsion.
 Let $n\in\ZZ$ be the order of $x$, and assume that $p\mid n$, $p\neq 2$.
Then there exist a multiple of $x$, called $y$, such that $py=0$.
 Clearly, $\rho_{BP}(y)$ is a non-zero element
in $BP^2_{(2)}(Y)$ (where $BP$ corresponds to a prime $p$). Note that as the map $\LL\rarr BP$ is graded,
it sends $\beta\in \LL^{-1}$ to zero if $p\neq 2$. On the other hand, by Prop. \ref{prop:from_Omega_to_BP}
the module $\Omega^*_{(2)}(Y)\ot\Zp$ is isomorphic to a polynomial ring over $BP^*_{(2)}(Y)$
 as $\LL$-module.
Clearly, $\beta$ is mapped to a generator of this polynomial ring, 
and therefore $\beta^i y \neq 0$ for any $i\ge 1$.

If $x$ is 2-primary torsion, then its image in $BP^2_{(2)}(Y)\cong^{\rho^{-1}} \CH^2(Y)\ot\ZZ_{(2)}$
 corresponding to prime $2$ is non-zero
and is annihilated by the multiplication on a power of $v_1$ (which is the image of $\beta$). 
Thus, $x$ generates a non-zero $\Phi$-module of degree 2 inside $BP^2_{(2)}(Y)$.

Consider the filtration on this $\Phi$-module from Prop. \ref{prop:abs_filtration_BP}.
As $\deg x<f(2)=\frac{2^2-1}{2-1}=3$, the highest factor of the filtration can only be $BP/I(1)$
where $I(1)=(2)$, i.e. this $\Phi$-module is isomorphic to $BP/(2^s)$ for some $s>0$,
and $v_1^l x \neq 0$ for any $l\ge 0$. Contradiction. 
\end{proof}

It follows from this Lemma, and the isomorphism $\CH^*=CK^*_0/(\beta)$, 
that the following sequence of abelian groups
is exact:
\begin{equation}\label{eq:CK0}
 0 \rarr \CH^2(S)[\beta]\cong \tau^2 CK_0^*(S)\rarr \tau^1 CK_0^*(S) \rarr \tau^1 CK_0^*(S)/\tau^2 CK_0^*(S)\cong^{\rho^{-1}}
\CH^1(S)[\beta] \rarr 0.
\end{equation}

The first piece of the topological filtration on $\ZZ[\beta]$-module $CK^*(S)$ is split by a 
free $\ZZ[\beta]$-summand generated by 1.
In other words, $CK_0^*(S)=\ZZ[\beta]\oplus \tau^1CK_0^*(S)$.

The exact sequence (\ref{eq:CK0}) may be not split,
but the extension it defines as of $\ZZ[\beta]$-modules
is specified by an extension of abelian groups $\CH^1(S)$ and $\CH^2(S)$ via element $\beta$,
see Lemma~\ref{lm:ext_specified},
which can be obtained by localizing at $\beta=1$.

Tensoring the exact sequence (\ref{eq:CK0}) over $\ZZ[\beta]$ by $\ZZ[\beta]/(\beta -1)$
and noting that $\Tor_1^{\ZZ[\beta]}$ between $\CH^1(S)[\beta]$ and $\ZZ[\beta]/(\beta -1)$
is equal to zero
we obtain the following:

$$ 0\rarr \CH^2(S)\rarr \tau^1 K_0(S) \rarr \CH^1(S) \rarr 0.$$

It is easy to check that the maps in this sequence
are $\rho_{K_0}$ which identify $\CH^i(S)$ with $gr^i_\tau K_0(S)$ for $i=1, 2$.
This finishes the proof of the theorem.
\end{proof}

\subsection{BP-theory of varieties of small dimension}

Recall that the structure of algebraic cobordism $p$-locally can be
recovered from the structure of $BP^*$ by Prop. \ref{prop:from_Omega_to_BP}.
Thus, one can reformulate following results in terms of $\Omega^*\ot\Zp$ instead of $BP^*$.

\begin{Lm}\label{lm:CH_BP_top_dim_p}
Let $Y$ be a smooth variety.
The map $\rho_{BP}:\CH^i(Y)\ot BP \rarr \oplus_{n\le i} BP^n_{(i)}(Y)$
is an isomorphism for $i\colon 0\le i \le p$.
\end{Lm}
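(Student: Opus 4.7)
My plan is to follow the general pattern of the proof of Lemma~\ref{lm:CH_CK0}. The cases $i=0$ and $i=1$ are immediate: for $i=0$ the canonical splitting $BP^* = BP\cdot 1_Y \oplus \tau^1 BP^*$ coming from generic constancy (as used in the proof of Th.~\ref{th:cobord_surface}) gives the claim, while for $i=1$ the statement is exactly Cor.~\ref{cr:CH1_Vishik} applied to the free theory $A^*=BP^*$. So the real content lies in the range $2\le i\le p$, where $\rho_{BP}$ is surjective by~(\ref{eq:rho_CH_Omega}) and only injectivity remains to be shown.

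The first technical step I would take is to upgrade the degree-$i$ piece of $\rho_{BP}$ to an isomorphism: the restriction $\CH^i(Y)\otimes\Zp \to BP^i_{(i)}(Y)$ is split injective via the unique multiplicative operation $BP^*\to \CH^*\otimes\Zp$ classifying the additive formal group law (i.e.\ the specialization $v_j\mapsto 0$), and it is also surjective, because $BP^*_{(i)}(Y)$ is generated as a $BP$-module by classes in codimension $i$ and the vanishing $BP^{>0}=0$ precludes any additional way of reaching degree $i$ from lower-codimension generators. In particular, the $\Zp$-order of any $z\in \CH^i(Y)\otimes\Zp$ is detected by $\rho_{BP}(z)$, so that $\Ann_{BP}(\rho_{BP}(z))\cap\Zp = \Ann_\Zp(z) = (p^e)\Zp$ where $p^e$ denotes the $\Zp$-order (with $p^\infty=0$ for non-torsion classes).

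The second step is to determine $\Ann_{BP}(\rho_{BP}(z))$ in full. The cyclic $\Phi$-module $M:=BP\cdot\rho_{BP}(z)$ of degree $i$ is either free (iff $z$ is non-torsion) or admits, by Prop.~\ref{prop:abs_filtration_BP}, a finite filtration whose factors are of the form $BP/I(n_j)\cdot e_j$ with $\deg e_j\ge f(n_j)$. Since elements of $M$ lie in degrees $\le i\le p<f(2)=p+1$, every $n_j$ must equal $1$, so each factor is a shifted $BP/(p)$. Running Prop.~\ref{prop:degree_estimates} at $n=1$ with $u=p^{e-1}$, so that $y=p^{e-1}\rho_{BP}(z)$ has $\Ann(y)=(p)$ and $v_1^l y\ne 0$ by Lemma~\ref{lm:v_n-annihilator-degree}, and combining with the top-degree isomorphism of the previous step, pins down $\Ann_{BP}(\rho_{BP}(z))=(p^e)\cdot BP$; hence $M\cong\Zp/p^e\otimes_{\Zp} BP$ canonically as $BP$-modules.

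The final step is to extend this cyclic computation to the full map, and this is what I expect to be the main obstacle. Using ind-coherence (Prop.~\ref{prop:finitely_presented}), the task reduces to a finitely generated subgroup $Z\subset\CH^i(Y)\otimes\Zp$ and the sub-$BP$-module $N\subset BP^*_{(i)}(Y)$ it generates under $\rho_{BP}$. Taking a composition series of $Z$ into cyclic $\Zp$-pieces and applying Prop.~\ref{prop:fin-gen-BP-is-ext-symm-mod} gives a filtration of $N$ whose subquotients are precisely the cyclic $\Phi$-modules handled above. The delicate point is showing that this filtration assembles without any collapse, i.e.\ that the natural surjection $Z\otimes BP \twoheadrightarrow N$ is actually an isomorphism. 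The approach I would take is an induction on degree $d<i$: any putative $BP$-linear relation in degree $d$ has its degree-$i$ component controlled by the isomorphism of the first step (reducing it to an abelian-group relation already present in $\CH^i(Y)\otimes\Zp$), while its strictly negative-degree remainder is lowered using the non-linearity of symmetric operations (Prop.~\ref{prop:symm_linearity}~(1)) applied monomial-by-monomial to the coefficients until it contradicts the cyclic annihilator computation of the second step.
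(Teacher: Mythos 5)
Your overall approach matches the paper's in its essential ingredients: injectivity of $\rho_{BP}$ in codimension $i$ (your Step~1), plus the filtration structure theorem (Cor.~\ref{cr:filtration_BP} / Prop.~\ref{prop:abs_filtration_BP}), with the observation that for $i\le p<f(2)$ the only factor types occurring are $BP$ and $BP/(p)$, generated in degree exactly $i$. The paper's actual proof is essentially just this: cite Cor.~\ref{cr:filtration_BP}, note the generators live in degree $i$, then use injectivity in degree $i$ together with surjectivity. So Steps~1 and~2 of your proposal are sound and in the right spirit (though in Step~2 the jump from ``$\Ann(p^{e-1}\rho_{BP}(z))=(p)$ plus $\Ann\cap\Zp=(p^e)$'' to ``$\Ann=(p^e)$'' is not automatic and still needs the full filtration, e.g.\ via the inclusions $BP\cdot p^{j}x\supset BP\cdot p^{j+1}x$ with quotients $BP/(p)$).

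Where your proposal goes astray is Step~3. You correctly identify that the surjection $Z\otimes BP\twoheadrightarrow N$ must be shown injective, but the mechanism you propose --- induction on degree $d<i$, feeding negative-degree terms through the non-linearity Prop.~\ref{prop:symm_linearity}(1) ``monomial-by-monomial'' --- is not a workable argument as stated, and in fact is unnecessary: all the symmetric-operations input is already packaged inside Cor.~\ref{cr:filtration_BP}, and the paper never re-invokes them here. The cleaner route, which you are actually halfway to when you note ``Taking a composition series of $Z$\ldots gives a filtration of $N$,'' is to observe that the filtrations of $Z\otimes BP$ and of $N$ are \emph{induced by the same composition series of $Z$} and that $\rho_{BP}$ respects them; then on each graded piece you are comparing $\Zp/p^{e_j}\otimes BP$ with a cyclic $\Phi$-module of degree $i$ whose degree-$i$ part is $\Zp/p^{e_j}$, which by Step~2 (applied to this quotient $\Phi$-module, not to the original $BP\cdot\rho_{BP}(z_j)$) is also $\cong BP/(p^{e_j})$, and a $BP$-linear surjection between cyclic modules generated in the same degree that is an isomorphism in that degree is an isomorphism. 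So: right ingredients, but the concluding step as you have written it would not compile into a proof, while a more direct comparison of filtrations closes the gap.
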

\begin{proof}
By Cor. \ref{cr:filtration_BP} we know that the $BP$-module $BP^*_{(i)}(Y)$
has a filtration whose factors are either $BP$ or $BP/I(1)=BP/(p)$ for $i<f(2)=p+1$
generated in degree $i$ (note that in the construction of the 
filtration in Prop.~\ref{prop:degree_estimates}
the generator $y$ of a submodule $BP/I(n)$ was obtained 
as a product of $x$ with elements of $I(n)$).
Since $\rho_{BP}$ is injective in degree $i$ and surjective in all degrees,
it follows that it is an isomorphism for $i\le p$.
\end{proof}

Similar to the case of algebraic surface we can show 
that if $\dim X\le p$ then the $BP$-module $BP^*(X)$ is equal to $CK(1)^*(X)\ot_{\Zp[v_1]} BP$
where $CK(1)^*$ is a first Morava K-theory which we now define.

\begin{Prop}\label{prop:artin-hasse}
Let $F$ be a formal group law over $\QQ$
defined by the logarithm $\log_{K(1)}(x)=\sum_{i=0}\frac{x^{p^i}}{p^i}$.

Then $F$ is strictly isomorphic to a multiplicative formal group law,
and this isomorphism has coefficients in $\Zp$. 
Thus, the coefficients of $F$ also lie in $\Zp$.
\end{Prop}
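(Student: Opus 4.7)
The plan is to construct an explicit strict isomorphism from $F$ to the multiplicative formal group law $F_m(x,y)=x+y-xy$ (whose logarithm is $\log_m(x)=-\log(1-x)$ and whose exponential is $\exp_m(y)=1-e^{-y}$), and then to recognise this isomorphism as essentially the Artin--Hasse exponential, whose $p$-integrality is classical.

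Concretely, first I would write down the candidate strict isomorphism. Over $\QQ$, every FGL is strictly isomorphic to $G_a$ via its logarithm, so the only possible strict isomorphism $\gamma:F\to F_m$ is
\[ \gamma(x)\;=\;\exp_m(\log_{K(1)}(x))\;=\;1-\exp\!\left(-\sum_{i\ge 0}\frac{x^{p^i}}{p^i}\right)\;=\;1-\frac{1}{E_p(x)},\]
where
\[ E_p(x)\;:=\;\exp\!\left(\sum_{i\ge 0}\frac{x^{p^i}}{p^i}\right)\]
is the Artin--Hasse exponential. By construction $\gamma\in x+x^2\QQ[[x]]$, so this is automatically a strict isomorphism of formal group laws over $\QQ$; what remains is to show that its coefficients are $p$-integral.

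Next I would invoke the Dwork--Dieudonné integrality criterion: a power series $f(x)\in 1+x\QQ[[x]]$ lies in $1+x\Zp[[x]]$ if and only if $f(x^p)/f(x)^p\in 1+px\Zp[[x]]$. Applied to $E_p$, a direct formal manipulation gives
\[ \frac{E_p(x^p)}{E_p(x)^p}\;=\;\exp\!\left(\sum_{i\ge 0}\frac{x^{p^{i+1}}}{p^i}-p\sum_{i\ge 0}\frac{x^{p^i}}{p^i}\right)\;=\;\exp(-px),\]
which visibly lies in $1+px\Zp[[x]]$. Hence $E_p(x)\in 1+x\Zp[[x]]$, its multiplicative inverse also lies in $1+x\Zp[[x]]$, and therefore $\gamma(x)=1-E_p(x)^{-1}\in x+x^2\Zp[[x]]$.

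Finally, since $\gamma$ is a power series with leading term $x$ and coefficients in $\Zp$, its compositional inverse $\gamma^{-1}$ lies in $x+x^2\Zp[[x]]$ as well. The equality $F(x,y)=\gamma^{-1}(F_m(\gamma(x),\gamma(y)))$ then expresses $F$ as a composition of power series with $\Zp$-coefficients applied to $F_m\in\ZZ[[x,y]]$, so $F$ itself has coefficients in $\Zp$. The only real content is the Artin--Hasse integrality statement, and that is essentially a one-line Dwork lemma computation, so I do not expect any serious obstacle.
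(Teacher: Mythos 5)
Your proof is correct and follows essentially the same route as the paper: both identify the strict isomorphism over $\QQ$ with (a variant of) the Artin--Hasse exponential and conclude by $p$-integrality of that series. The only difference is cosmetic and in level of detail: the paper takes $F_m(x,y)=x+y+xy$, so the isomorphism is directly $E_p(x)-1$, and simply cites the classical integrality of the Artin--Hasse exponential; you take $F_m(x,y)=x+y-xy$, so you get $1-E_p(x)^{-1}$ (requiring one extra observation that $E_p$ is a unit in $\Zp[[x]]$), and then supply the Dwork--Dieudonn\'e computation $E_p(x^p)/E_p(x)^p=\exp(-px)$ to prove integrality from scratch. This makes your argument self-contained where the paper relies on a citation, but the underlying idea is identical.
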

\begin{proof}
The strict isomorphism of formal group laws $F_1, F_2$ over $\QQ$ is unique,
and is defined by the series $\log_{F_1}\circ \log_{F_2}^{-1}$.
Thus, one needs to check that $\exp\circ \log_{K(1)}$ has coefficients in $\Zp$.
This series is known as the Artin-Hasse exponential 
and is known to be $p$-integral.
\end{proof}

If a formal group law $F(x,y)=\sum a_{ij} x^iy^j$, $a_{ij}\in \Zp$ has
$a_{ij} =0$ whenever $i+j\neq 1 \mod p-1$,
then the formal group law $F(x,y)=\sum a_{ij} v_1^{\frac{i+j-1}{p-1}} x^iy^j$
is a graded formal group law over $\Zp[v_1]$, $\deg v_1=1-p$.
Moreover, if the logarithm of $F$ is $\sum a_i x^{p^i}$,
then the logarithm of the graded version is $\sum a_i v_1^{\frac{p^i-1}{p-1}} x^{p^i}$.
This proves the correctness of the following definition.

\begin{Def}
Let $F_{CK(1)}$ be a formal group law over the graded ring $\Zp[v_1]$, $\deg v_1=1-p$,
defined by the logarithm $\log_{CK(1)}(x)=\sum_{i=0}v_1^\frac{p^i-1}{p-1}\frac{x^{p^i}}{p^i}$.
The corresponding free theory $CK(1)^*$ is called {\sl the first connective Morava K-theory}.

The free $\ZZ/(p-1)$-graded theory $K(1)^*=CK(1)^*/(v_1-1)$
is called {\sl the first Morava K-theory}.
\end{Def}

We will denote graded components of $K(1)^*$ 
by $K(1)^i$ where $i\in \ZZ$ meaning that $K(1)^i=K(1)^j$ if $i\equiv~j~\mod~(p-1)$.
Thus, $K(1)^*=K(1)^1 \oplus K(1)^2 \oplus \cdots \oplus K(1)^{p-1}$.

\begin{Lm}\label{lm:CK(1)_top}
Let $Y$ be a smooth variety.

Then $\tau^i CK(1)^j = \tau^{i+1} CK(1)^j$ , $\tau^i K(1)^j = \tau^{i+1} K(1)^j$
if $j\neq i \mod (p-1)$.

In particular, if $\dim Y\le p$,
then $\oplus_n CK(1)^n_{(i)}(Y)=\oplus_{m=0}^\infty CK(1)^{i-m(p-1)}(Y)$
 and $K(1)^i(Y)=K(1)^*_{(i)}(Y)$ 
for $i\colon 2\le i \le p-2$, $K(1)^{p-1}(Y)=K(1)\oplus K(1)^*_{(p-1)}(Y)$.
\end{Lm}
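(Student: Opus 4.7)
The plan is to reduce both assertions to cycle-codimension counting via the Levine-Morel surjection $\rho$ of \eqref{eq:rho_CH_Omega}. For the first equalities $\tau^i CK(1)^j = \tau^{i+1} CK(1)^j$ when $j \not\equiv i \pmod{p-1}$, I would apply the surjection $\rho_{CK(1)} \colon \CH^i(Y) \otimes \Zp[v_1] \twoheadrightarrow CK(1)^*_{(i)}(Y)$. A generator $\alpha \otimes v_1^s$ with $\alpha \in \CH^i(Y)$ and $s \ge 0$ has total degree $i - s(p-1)$, so the graded piece $CK(1)^j_{(i)}(Y)$ can be nonzero only when $j \le i$ and $j \equiv i \pmod{p-1}$; in particular it vanishes when $j \not\equiv i \pmod{p-1}$, which is the required equality. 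The $K(1)$ version is identical, with $s$ now ranging over all of $\ZZ$ since $v_1$ is inverted, so only the congruence condition remains.

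For the ``in particular'' part, I would combine the first part with the fact that $\CH^r(Y) = 0$ for $r > \dim Y$, so in our setting $r \le p$. For $n = i - m(p-1)$ with $m \ge 0$, every element of $CK(1)^n(Y)$ is a combination of terms $v_1^s \alpha$ with $\alpha \in \CH^r(Y)$, $s \ge 0$, and $r = i + (s-m)(p-1)$; for $i \ge 2$ the constraints $0 \le r \le p$ and $s \ge 0$ have the unique solution $s = m$, $r = i$. Consequently every such element lies in $\tau^i CK(1)^n(Y)$, while no element lies in $\tau^{i+1} CK(1)^n(Y)$ (the next admissible codimension $i + (p-1)$ already exceeds $p$ when $i \ge 2$), giving $CK(1)^n_{(i)}(Y) = CK(1)^n(Y)$; combined with the vanishing from the first part for $n \not\equiv i \pmod{p-1}$, this yields the stated $CK(1)$ identity.

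The $K(1)$ statements follow from the same codimension count on $K(1)^*$. For $2 \le i \le p-2$ the only $r \in \{0, \ldots, p\}$ with $r \equiv i \pmod{p-1}$ is $r = i$ itself, so $K(1)^i(Y)$ is entirely generated by codimension-$i$ classes and the previous analysis gives $K(1)^i(Y) = K(1)^i_{(i)}(Y) = K(1)^*_{(i)}(Y)$. For $i = p-1$ the admissible codimensions are $r \in \{0, p-1\}$: the $r = 0$ part contributes the coefficient copy $K(1) \cdot 1_Y \cong K(1)$, which splits off by generic constancy of $K(1)^*$ (as used in the proof of Cor.~\ref{cr:CH1_Vishik}), while the first part of the lemma collapses $\tau^1 K(1)^{p-1}(Y) = \tau^{p-1} K(1)^{p-1}(Y) = K(1)^*_{(p-1)}(Y)$, producing the decomposition $K(1)^{p-1}(Y) = K(1) \oplus K(1)^*_{(p-1)}(Y)$.

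The point requiring most care is verifying the vanishing $\tau^{i+1} = 0$ in the precise range of $i$: this is exactly what restricts the clean identity $K(1)^i(Y) = K(1)^*_{(i)}(Y)$ to $2 \le i \le p-2$, and forces the separate splitting argument via generic constancy in the boundary case $i = p-1$, where two distinct codimensions contribute simultaneously.
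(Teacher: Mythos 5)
Your proof is correct and follows essentially the same route as the paper's: both rest on the Levine--Morel description of the topological filtration (Prop.~\ref{prop:top_omega} and the resulting surjection $\rho$ of \eqref{eq:rho_CH_Omega}), and both reduce the first assertion to the observation that the coefficient ring of $CK(1)^*$ is concentrated in degrees divisible by $p-1$, with the $K(1)$ statements deduced by inverting $v_1$ and using the canonical splitting off of the coefficient ring. One small imprecision worth flagging: the intermediate claim that ``for $i\ge 2$'' the constraints $0\le r\le p$, $s\ge 0$ force $s=m$, $r=i$ is actually only valid for $2\le i\le p-2$ when $m\ge 1$ (for $i=p-1$ the pair $(s,r)=(m-1,0)$ also satisfies them), but since the lemma only asserts the $CK(1)$ and $K(1)^i$ identities in this range and you correctly isolate the boundary case $i=p-1$ at the end, the argument goes through.
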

\begin{proof}
For a $\ZZ$-graded free theory $A^*$ such that the map $\LL\rarr A$ is surjective
we have $\tau^i A^n(X) = \cup_{m\le -i} A^{m}\cdot  A^{n-m}(X)$ for every smooth variety $X$
as follows from \cite[Th. 4.5.7]{LevMor}.
However, the ring of coefficients of $CK(1)^*$ has non-zero elements 
only in degrees dividing $p-1$. 
The result for $K(1)^*$ follows by localization
and using the canonical splitting $K(1)^*=K(1)\oplus \tau^1 K(1)^*$.
\end{proof}

\begin{Th}\label{th:BP_pfold}
Let $p$ be a prime number, let $X$ be a smooth variety, $\dim X\le p$.

Then the graded $BP$-module $BP^*(X)$ 
is a direct sum of $BP\cdot 1_X$, $\CH^j(X)\ot BP(j)$ for $j\colon 2\le j \le p-1$
and a graded $BP$-module $M$ which 
fits into the following exact sequence
$$ 0\rarr \CH^p(X)\ot BP(p) \rarr M\rarr \CH^1(X)\ot BP(1)\rarr 0,$$
and this extension of $BP$-modules 
is specified by the following extension of abelian groups via the element $v_1\in BP^{1-p}$ 
$$ 0\rarr \CH^p(X)\ot\Zp \cong \tau^p K(1)^1(X)\rarr \tau^1 K(1)^p(X)
\rarr gr_\tau^1 K(1)^p(X)\cong \CH^1(X)\ot\Zp\rarr 0.$$
\end{Th}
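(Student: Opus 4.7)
The plan is to mirror the proof of Theorem~\ref{th:cobord_surface} closely, substituting $\Zp[v_1]$ for $\ZZ[\beta]$ and the first connective Morava $K$-theory $CK(1)^*$ for $CK_0^*$.

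First, I would establish the base-change identity $BP^*(X) = CK(1)^*(X) \ot_{\Zp[v_1]} BP$ when $\dim X \le p$. By Vishik's positive-relations theorem (Th.~\ref{th:relations_positive}), $BP^*(X)$ admits a free presentation $BP(R) \to BP(G) \to BP^*(X) \to 0$ with $G$ generated in codims $[0, p]$ and $R$ in codims $[1, p]$. The matrix entries of this presentation have $BP$-degrees in $[1-p,\,p]$, so among the polynomial generators $v_j$ of $BP$ only $v_1$ (of degree $1-p$) can appear, since $\deg v_j = 1 - p^j \le 1 - p^2 < 1 - p$ for $j \ge 2$. Hence the presentation descends to $\Zp[v_1]$, and the resulting $\Zp[v_1]$-module is canonically identified with $CK(1)^*(X) = BP^*(X) \ot_{BP} \Zp[v_1]$. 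I would also need the $CK(1)$-analog of Lemma~\ref{lm:CH_CK0}: $\rho_{CK(1)} : \CH^i(X) \ot \Zp[v_1] \to CK(1)^*_{(i)}(X)$ is an isomorphism for $0 \le i \le p$; this follows from Lemma~\ref{lm:CH_BP_top_dim_p} upon identifying $CK(1)^*_{(i)}(X) = BP^*_{(i)}(X) \ot_{BP} \Zp[v_1]$ via the description of the topological filtration in Prop.~\ref{prop:top_omega}.

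Next I would exploit that $\deg v_1 = -(p-1)$ to split $CK(1)^*(X)$ into $\Zp[v_1]$-submodules indexed by the congruence class of the total degree modulo $p-1$:
\[
CK(1)^*(X) = \bigoplus_{c \in \ZZ/(p-1)} CK(1)^{*,[c]}(X).
\]
By Lemma~\ref{lm:CK(1)_top} and the previous step, the codim-$j$ graded piece $\CH^j(X) \ot \Zp[v_1](j)$ lies entirely in class $j \bmod (p-1)$. For $\dim X \le p$, codims $j \in \{1, \ldots, p\}$ distribute as follows: $j = 1$ and $j = p$ both hit class $1$ (since $p \equiv 1 \bmod p-1$), while each middle codim $j \in \{2, \ldots, p-1\}$ occupies its own distinct class $\neq 1$. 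Hence for each $c \neq 1$, the class-$c$ part $\tau^1 CK(1)^{*,[c]}(X)$ is concentrated in a single codim $j(c) \in \{2, \ldots, p-1\}$ and is freely identified with $\CH^{j(c)}(X) \ot \Zp[v_1](j(c))$, whereas the class-$1$ part $M_{CK(1)} := \tau^1 CK(1)^{*,[1]}(X)$ fits into the short exact sequence
\[
0 \to \CH^p(X) \ot \Zp[v_1](p) \to M_{CK(1)} \to \CH^1(X) \ot \Zp[v_1](1) \to 0
\]
induced from $\tau^p \subset \tau^1$ restricted to class $1$. Combining with the splitting $BP^*(X) = BP \cdot 1_X \op \tau^1 BP^*(X)$ and applying $\ot_{\Zp[v_1]} BP$ (flat) yields the direct-sum decomposition in the theorem, with $M = M_{CK(1)} \ot_{\Zp[v_1]} BP$.

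Finally, to identify $M$ as an extension specified via $v_1$, Lemma~\ref{lm:ext_specified} classifies $M_{CK(1)}$ by an element of $\Ext^1_\ZZ(\CH^1(X) \ot \Zp,\, \CH^p(X) \ot \Zp)$ tensored with $v_1 \in \Zp[v_1]$ in degree $1-p$, so $M$ is specified in the sense of Def.~\ref{def:ext_specified} by an extension of abelian groups via $v_1$. Localizing at $v_1 = 1$ turns $CK(1)^*$ into $K(1)^*$; under this localization $M_{CK(1)}$ becomes $\tau^1 K(1)^1(X) = \tau^1 K(1)^p(X)$ (since $K(1)^*$ is $\ZZ/(p-1)$-graded and $p \equiv 1 \bmod p-1$), and the sequence above becomes precisely the stated extension of abelian groups. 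The main obstacle is the careful verification of the compatibility of the topological filtration with the base change $BP \to \Zp[v_1]$ needed for the $CK(1)$-analog of Lemma~\ref{lm:CH_BP_top_dim_p}; this is handled by the same strategy as Lemma~\ref{lm:CH_CK0} in the surface case.
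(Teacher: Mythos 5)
Your proposal is correct and follows essentially the same route as the paper's proof: both derive $BP^*(X) = CK(1)^*(X)\ot_{\Zp[v_1]} BP$ from the degree bounds on the Vishik presentation, use Lemma~\ref{lm:CH_BP_top_dim_p} to identify the codimension-graded pieces with Chow groups, exploit the degree-mod-$(p-1)$ structure of $\Zp[v_1]$-modules (which the paper packages in Lemma~\ref{lm:CK(1)_top}), and finally specialize at $v_1=1$ to express the class-$1$ extension in terms of $K(1)$. Your explicit decomposition of $CK(1)^*(X)$ by congruence class modulo $p-1$ simply makes visible what the paper leaves implicit; the underlying argument is the same.
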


\begin{proof}

As follows from the proof of Prop. \ref{prop:finitely_presented} 
we have $BP^*(X)= CK(1)^*(X)\ot_{\Zp[v_1]} BP$. 
Thus, we need to focus only on the structure of the first connective Morava K-theory.

It follows from Lemmata \ref{lm:CH_BP_top_dim_p} and \ref{lm:CK(1)_top}
that $\oplus_{n\ge 0} CK(1)^{i-n(p-1)}=\CH^i\ot\Zp[v_1]$ for $i\colon 2\le i \le p-1$,
and we have the following exact sequence

$$ 0\rarr \tau^p CK(1)^{p+*(p-1)}(X) \rarr \tau^1 CK(1)^{p+*(p-1)}_{(p)}(X)
\rarr CK(1)_{(1)}^*(X)\rarr 0.$$ 

Note that $CK(1)^{p+*(p-1)}(X)\cong \CH^p(X)\ot\Zp[v_1](p)$,
and $CK(1)_{(1)}^*(X)\cong \CH^1(X)\ot\Zp[v_1](1)$,
so the exact sequence is specified by an extension of Chow groups
via element $v_1$ (see Lemma~\ref{lm:ext_specified}).
Tensoring this exact sequence over $\Zp[v_1]$ by $\Zp[v_1]/(v_1 -1)$
we obtain the exact sequence

$$ 0\rarr \CH^p(X)\ot\Zp\cong \tau^p K(1)^*(X) \rarr \tau^1 K(1)^1(X) \rarr K(1)^*_{(1)}\cong \CH^1(X)\ot\Zp\rarr 0.$$

\end{proof}

There is a simple connection between the first Morava K-theory
and $K_0$ which we now state and which can be used to restate Theorem \ref{th:BP_pfold}
in more classical terms.

\begin{Prop}\label{prop:K0-p-local}
Let $X$ be a smooth variety.
Then there exist a multiplicative isomorphism 
$$K_0(X)\ot\Zp \cong \oplus_{j=1}^{p-1} K(1)^j(X).$$

In particular, if $\dim X\le p$,
 this isomorphism identifies 
\begin{itemize}
\item  $\tau^i K_0(X)\ot\Zp$ with 
 {$\tau^p \tilde{K}(1)^1(X) \bigoplus \oplus_{j=i}^{p-1} \tilde{K}(1)^j(X)$}
for $i\colon 2\le i\le p-1$;
\item $\tau^p K_0(X)\ot\Zp$ with $\tau^p \tilde{K}(1)^1(X)$
\end{itemize} 
where $\tilde{K}(1)^* = \tau^1 K(1)^* = \mathrm{Ker\ }(K(1)^*\rarr \Zp)$ is the subideal
of elements which vanish at generic points.
\end{Prop}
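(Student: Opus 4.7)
My plan is to obtain the isomorphism as a Panin--Smirnov reorientation and then transfer the topological filtration computation from $K(1)^*$, where it is made transparent by Lemma~\ref{lm:CK(1)_top}, to $K_0\ot\Zp$.

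First, I would observe that $K_0\ot\Zp$ is the free theory over $\Zp$ corresponding to the multiplicative formal group law $F_m(x,y)=x+y-xy$, while the underlying presheaf of rings $\bigoplus_{j=1}^{p-1}K(1)^j$ (that is, $K(1)^*$ with its $\ZZ/(p-1)$-grading forgotten) is the free theory corresponding to the formal group law with logarithm $\sum_{i\ge 0}x^{p^i}/p^i$ over $\Zp$. By Prop.~\ref{prop:artin-hasse} the Artin--Hasse exponential provides a strict isomorphism over $\Zp$ from the multiplicative FGL to this latter one. By the universal property of $\Omega^*\ot\Zp$ together with Panin--Smirnov reorientation (Prop.~\ref{prop:mult_op_from_cobord}) this strict isomorphism produces an invertible multiplicative operation $\Psi\colon K_0(X)\ot\Zp\xrightarrow{\sim}\bigoplus_{j=1}^{p-1}K(1)^j(X)$, natural in the smooth variety $X$.

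Second, any multiplicative operation preserves zero and hence preserves the topological filtration, and because $\Psi$ is invertible it induces a bijection $\Psi(\tau^i K_0(X)\ot\Zp)=\bigoplus_{j=1}^{p-1}\tau^i K(1)^j(X)$ for every $i$. It thus remains to identify $\tau^i K(1)^j(X)$ for $\dim X\le p$ and each $j\in\{1,\ldots,p-1\}$. By Lemma~\ref{lm:CK(1)_top} the filtration on $K(1)^j(X)$ can jump only at indices $i$ with $i\equiv j\pmod{p-1}$, and $\tau^{\dim X+1}K(1)^j(X)=0$. For $j\in\{2,\ldots,p-1\}$ the next candidate jump after $j$ is $j+(p-1)\ge p+1>\dim X$, so $\tau^i K(1)^j(X)=\tilde K(1)^j(X)$ for $1\le i\le j$ and $\tau^iK(1)^j(X)=0$ for $i>j$. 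For $j=1$ the next jump after $1$ is at $p$, so $\tau^i K(1)^1(X)=\tau^p\tilde K(1)^1(X)$ for $2\le i\le p$ and vanishes for $i>p$. Summing over $j$ and restricting to $2\le i\le p-1$ yields precisely $\tau^p\tilde K(1)^1(X)\oplus\bigoplus_{j=i}^{p-1}\tilde K(1)^j(X)$, while for $i=p$ only the summand $\tau^p\tilde K(1)^1(X)$ survives, matching the claim.

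The main subtlety lies in the first paragraph: the Panin--Smirnov construction is formulated for $\ZZ$-graded oriented theories, whereas $K(1)^*$ is only $\ZZ/(p-1)$-graded. One must therefore treat $\bigoplus_{j=1}^{p-1}K(1)^j$ as a free oriented theory over $\Zp$ with its grading forgotten, in the sense of Levine--Morel, so that Prop.~\ref{prop:mult_op_from_cobord} applies and yields an isomorphism of underlying presheaves of rings. Once this is done, the topological filtration -- being defined purely in terms of kernels of restriction maps (Prop.~\ref{prop:top_omega}) -- is an invariant of the underlying presheaf, so its transport under $\Psi$ is automatic and the rest of the argument is a direct bookkeeping of Lemma~\ref{lm:CK(1)_top}.
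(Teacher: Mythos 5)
Your proof is correct and follows essentially the same route as the paper: Prop.~\ref{prop:artin-hasse} together with Panin--Smirnov reorientation provides the multiplicative isomorphism, which automatically preserves the topological filtration, and the rest is a direct computation with Lemma~\ref{lm:CK(1)_top}. Your explicit verification of where the filtration on each $K(1)^j(X)$ can jump when $\dim X \le p$, and your flagging of the $\ZZ/(p-1)$-grading subtlety in applying the reorientation, supply detail that the paper's very terse proof leaves implicit.
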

\begin{proof}
The existence of a multiplicative isomorphism follows by Prop. \ref{prop:artin-hasse}
and the Panin-Smirnov reorientation construction.
Clearly, this isomorphism respects the topological filtration,
and the claim then follows from Lemma \ref{lm:CK(1)_top}.
\end{proof}

We also note that the projector of $K_0\ot\Zp$ 
which image is isomorphic to $K(1)^i$ where $i\colon 1\le i\le p-1$
can be written down with the help of Chern classes $c_j:K_0\rarr K_0$ 
or in terms of $\lambda$-operations on $K_0$.

\section{Resolutions of algebraic cobordism over the Lazard ring}\label{sec:resolutions}

In this section we prove the Syzygies conjecture:

\begin{Conj}[Vishik, {\cite[Conj. 4.5 p. 981]{Vish_Lazard}}]
Let $X$ be a smooth variety of dimension $d$. 
Then $\Omega^*(X)$ has a free $\LL$-resolution whose $j$-th term has generators concentrated 
in codimensions between $j$ and $d$. 

In particular, the cohomological $\mathbb{L}$-dimension
of the $\mathbb{L}$-module is less or equal to $d$.
\end{Conj}

First, we describe a homological criterion for a $\LL$-module to have a free resolution as above,
and then show that the results of Th. \ref{th:filtration} imply that these
conditions are satisfied for the algebraic cobordism.

Recall that $\LL/\LL^{<0}=\ZZ$.

\begin{Prop}\label{prop:tor_syzygies}
Let $M$ be a graded $\mathbb{L}$-module generated in non-negative degrees not greater than $d\ge 0$.

Then TFAE:
\begin{enumerate}
\item $M$ satisfies the Syzygies Conjecture;

\item For every $j\ge 0$ we have $\Tor^{\LL}_j(M,\LL/\LL^{<0})$ is concentrated 
in degrees $k\colon d\ge k\ge j$
and it is a free abelian group in degree $j$.
\end{enumerate}

In particular, the class of $\mathbb{L}$-modules satisfying 
the Syzygies Conjecture is closed under extensions.
\end{Prop}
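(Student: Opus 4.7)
The plan is to use the standard dictionary between free resolutions and $\Tor$ via tensoring with $\ZZ=\LL/\LL^{<0}$, combined with the graded Nakayama lemma for $\LL$-modules bounded above in degree (which holds since $\LL^0=\ZZ$ and all other generators of $\LL$ lie in strictly negative degrees), and the elementary fact that any subgroup of a free abelian group is free.

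For $(1)\Rightarrow (2)$, I would compute $\Tor_j^\LL(M,\ZZ)$ as $H_j(F_\bullet\otimes_\LL \ZZ)$ using a resolution $F_\bullet$ of the form promised by the Syzygies Conjecture. Since each $F_j\otimes_\LL \ZZ$ is a free abelian group concentrated in degrees $[j,d]$, the homology inherits this concentration. In degree $j$ exactly, the differential coming from $F_{j+1}\otimes\ZZ$ vanishes (its source is zero in that degree), so $\Tor_j(M,\ZZ)_j$ is the kernel of a map out of the free abelian group $(F_j\otimes\ZZ)_j$, hence free abelian.

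For $(2)\Rightarrow (1)$, I would argue by induction using the slightly stronger statement: if $N$ is generated in degrees $[\ell,d]$ and $\Tor_j(N,\ZZ)$ is concentrated in $[\ell+j,d]$ and free abelian in degree $\ell+j$ for all $j\ge 0$, then $N$ admits a free resolution $G_\bullet$ with $G_j$ generated in $[\ell+j,d]$. To build the step $G_0$, I would use that $(N\otimes_\LL\ZZ)_\ell$ is free abelian to choose a basis, lift to $N^\ell$, and extend to a surjection $G_0\twoheadrightarrow N$ with $G_0$ generated in $[\ell,d]$ and with $(G_0\otimes\ZZ)_\ell\cong (N\otimes\ZZ)_\ell$. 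Setting $K=\ker(G_0\rarr N)$, the long exact sequence of $\Tor$ gives $(K\otimes\ZZ)_\ell=0$ (using $\Tor_1(N,\ZZ)_\ell=0$), and for $k>\ell$ a short exact sequence
\[ 0\rarr \Tor_1(N,\ZZ)_k \rarr (K\otimes\ZZ)_k \rarr \ker\bigl((G_0\otimes\ZZ)_k\rarr (N\otimes\ZZ)_k\bigr) \rarr 0, \]
whose right-hand term is a subgroup of the free abelian group $(G_0\otimes\ZZ)_k$, hence free. The extension therefore splits, and by the hypothesis applied in degree $\ell+1$ the module $(K\otimes\ZZ)_{\ell+1}$ is free abelian. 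Combined with the shift isomorphisms $\Tor_j(K,\ZZ)\cong \Tor_{j+1}(N,\ZZ)$ for $j\ge 1$, the module $K$ satisfies the inductive hypothesis at level $\ell+1$; iteration terminates after at most $d+1$ steps, where Nakayama forces the final syzygy to vanish.

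The main obstacle is the freeness check for $(K\otimes\ZZ)_{\ell+1}$ in the inductive step, and this is precisely the point where the hypothesis of freeness of $\Tor_j$ in the lowest degree $j$ (and not only concentration) is essential. Closure under extensions is then an immediate consequence of the long exact sequence of $\Tor$: given $0\rarr A\rarr B\rarr C\rarr 0$ with $A,C$ satisfying (2), the concentration of $\Tor_j(B,\ZZ)$ in $[j,d]$ follows by sandwiching between $\Tor_j(A,\ZZ)$ and $\Tor_j(C,\ZZ)$, while freeness of $\Tor_j(B,\ZZ)_j$ follows from the short exact sequence $0\rarr \Tor_j(A,\ZZ)_j\rarr \Tor_j(B,\ZZ)_j\rarr K\rarr 0$ with $K$ a subgroup of the free abelian group $\Tor_j(C,\ZZ)_j$ (using $\Tor_{j+1}(C,\ZZ)_j=0$, which follows from the assumed concentration of $\Tor_{j+1}(C,\ZZ)$ in $[j+1,d]$).
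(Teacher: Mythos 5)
Your proof is correct and follows essentially the same route as the paper: tensor a resolution of the prescribed shape with $\ZZ=\LL/\LL^{<0}$ and read off the degree bounds and freeness of the lowest $\Tor$ component for $(1)\Rightarrow(2)$; for $(2)\Rightarrow(1)$, build the resolution step by step via a graded Nakayama lift, controlling the first syzygy through the long exact sequence and the freeness hypothesis exactly as in the paper's Lemma~\ref{lm:induction_step}; and deduce closure under extensions from the same sandwiching argument in the long exact sequence. The only cosmetic difference is that you package the induction as a single self-strengthened statement indexed by $\ell$, whereas the paper isolates the one-step version as a standalone lemma — the mathematical content is identical.
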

\begin{proof}
$(1)\Rightarrow (2)$.
Suppose that $M$ has a free $\LL$-resolution $F_d\rarr F_{d-1}\rarr \cdots \rarr F_0\rarr M$
where $F_j$ is concentrated in degrees between $j$ and $d$.
Tensoring the resolution by $\LL/\LL^{<0}$ over the Lazard ring
we obtain the complex of graded abelian groups $G_d\rarr G_{d-1}\rarr \cdots \rarr G_0$
where $G_j$ is free and is concentrated in degrees between $j$ and $d$.
It is clear now that the $j$-th cohomology of this complex $\Tor_j^\LL(M,\LL/\LL^{<0})$
 can be non-zero only in degrees between $j$ and $d$,
and, moreover, $\Tor_j^\LL(M,\LL/\LL^{<0})^j$ is the kernel of the map $G_j^j\rarr G_{j-1}^j$
and therefore is free.

$(2)\Rightarrow (1)$. The following result is straight-forward.
\begin{Lm}\label{lm:generators_graded_L_module}
Let $N$ be a graded $\mathbb{L}$-module concentrated in degrees bounded from above.

Then if $p:G\rarr N\ot_\LL \LL/\LL^{<0}$ is a surjection of graded abelian groups where $G$ is free,
then there exist a surjection of graded $\LL$-modules
$p_{\LL}:G \ot_\ZZ \LL\rarr N$ s.t. $p_{\LL}\ot_\LL \LL/\LL^{<0}=p$.
\end{Lm}

Suppose the module $M$ satisfying the assumptions (2) of Proposition is given,
the following lemma allows to construct the free $\LL$-resolution of $M$ inductively step by step.
Indeed, in the induction step one assumes that 
$r-1$ steps of the free $\LL$-resolution of $M$
satisfying Syzygies Conjecture are constructed $F_{r-1}\rarr F_{r-2} \rarr \cdots F_0\rarr M$,
and the kernel $N$ of the map $F_{r-1} \rarr F_{r-2}$
satisfies the properties of the following Lemma.
The conclusion of the Lemma together with the assumptions (2)
allows then to continue induction.

\begin{Lm}\label{lm:induction_step}
Let $N$ be a graded $\mathbb{L}$-module, let $r: d\ge r\ge 0$.
Assume that the graded abelian group $N\ot_\LL \LL/\LL^{<0}$
is concentrated in degrees $k\colon d\ge k\ge r$ and
its $r$-th component is free, 
and that $\Tor_1^\LL(N, \LL/\LL^{<0})$ is concentrated in degrees $k\colon d\ge k\ge r+1$
and its $r+1$-th component is free.

Then $N$ has a presentation of $\LL$-modules $0\rarr R\rarr F \rarr N\rarr 0$
where $F$ is free and is generated in degrees $k\colon d\ge k\ge r$,
 $R$ is generated in degrees $k$: $d\ge k\ge r+1$,
 and $(R\ot_\LL \LL/\LL^{<0})^{r+1}$ is free.
 
Moreover, $\Tor_j^{\LL}(R, \LL/\LL^{<0})=\Tor_{j+1}^\LL(N,\LL/\LL^{<0})$ for $j\ge 1$.
\end{Lm}
\begin{proof}
By the assumption one finds a graded free abelian group $G$ 
 concentrated in degrees $k$: $d\ge k\ge r$,
and a map $p:G\rarr N\ot_\LL \LL/\LL^{<0}$ which is an isomorphism in degree $r$.
By Lemma \ref{lm:generators_graded_L_module} 
we have a surjective map $p_\LL:G\ot_\ZZ \LL \rarr N$ of graded $\LL$-modules. 

Denote by $F:=G\ot_\ZZ \LL$ the free module, 
and by $R$ the kernel of the map $p_\LL$. Applying $\ot_\LL \LL/\LL^{<0}$ to 
the exact sequence $0\rarr R\rarr F \rarr N\rarr 0$
we get that $R\ot_\LL \LL/\LL^{<0}$ is zero in degrees less than $r+1$,
and that $\Tor_j^{\LL}(R, \LL/\LL^{<0})=\Tor_{j+1}^\LL(N,\LL/\LL^{<0})$ for $j\ge 1$.

Also we get the exact sequence:
\begin{equation}\label{eq:induction_step}
0\rarr \Tor^{\LL}_{1}(N,\LL/\LL^{<0})^{r+1}\rarr (R\ot_\LL \LL/\LL^{<0})^{r+1}
\rarr (F\ot_\LL \LL/\LL^{<0})^{r+1}, 
\end{equation}
from which it follows that $(R\ot_\LL \LL/\LL^{<0})^{r+1}$
as it is an extension of a free group (its image in a free group $(F\ot_\LL \LL/\LL^{<0})^{r+1}$)
by a free group.
\end{proof}

This proves the equivalence $(1) \Leftrightarrow (2)$.
Suppose now that we have a short exact sequence of $\LL$-modules:
$$ 0\rarr N\rarr M \rarr K\rarr 0,$$
where $N$ and $K$ satisfy  the Syzygies conjecture for the same $d\ge 0$.
We want to show that $M$ also satisfies the Syzygies conjecture for $d$.

Applying $\ot_\LL \LL/\LL^{<0}$ to the short exact sequence
we easily see that $\Tor^{\LL}_j(M,\LL/\LL^{<0})$ is concentrated in degrees $k \colon d\ge k\ge j$.
Moreover,
as we have an exact sequence 
$$\Tor^{\LL}_{j+1}(K,\LL/\LL^{<0})^j=0\rarr \Tor^{\LL}_j(N,\LL/\LL^{<0})^j\rarr \Tor^{\LL}_j(M,\LL/\LL^{<0})^j\rarr \Tor^{\LL}_j(K,\LL/\LL^{<0})^j,$$
where the graded abelian groups on its left and right end are free,
it follows that the group in the middle is also free.
\end{proof}

First, let us check that the Syzygies Conjecture
is true for the simplest $\LL$-modules appearing in Th. \ref{th:filtration}.

\begin{Lm}\label{lm:koszul_resolution}
Let $p$ be a prime, denote by $M$ the graded $\LL$-module $\LL/I(p,n)x$
where $\deg x\ge f(n)$.

Then $\Tor^{\LL}_j(M, \LL/\LL^{<0})$ is concentrated in degrees $k\colon \deg x\ge k\ge j+1$.
%i.e. it satisfies the assumption of Cor. \ref{cr:sufficient_conditions_SC} for the corresponding prime $p$.
\end{Lm}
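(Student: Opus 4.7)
The plan is to construct an explicit Koszul resolution of $M$ and read off the Tor groups directly. As noted earlier in the paper, the elements $p, x_{p-1}, x_{p^2-1}, \ldots, x_{p^{n-1}-1}$ generate $I(p,n)$ and can be completed to a set of polynomial generators of $\LL$, hence form a regular sequence in $\LL$. The Koszul complex on these elements, shifted so that the augmentation generator lies in degree $\deg x$, is therefore a free graded $\LL$-resolution of $M = \LL/I(p,n)\cdot x$. Explicitly, $K_j$ has $\LL$-basis $\{e_S : S\subset\{0,1,\ldots,n-1\},\ |S|=j\}$, with $e_S$ placed in cohomological degree $\deg x - \sum_{i\in S}(p^i-1)$ (recall $p$ has degree $0$ and $x_{p^i-1}$ has degree $1-p^i$).

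Next, tensor $K_\bullet$ with $\LL/\LL^{<0}=\ZZ$. The key observation is that for every $i\ge 1$ the element $x_{p^i-1}$ lies in $\LL^{<0}$ and hence acts as zero after the tensor, while $p$ survives as a nonzerodivisor in $\ZZ$. The differential therefore collapses to $d(e_S)=\pm p\cdot e_{S\setminus\{0\}}$ when $0\in S$, and $d(e_S)=0$ otherwise. This produces a direct sum decomposition of $K_\bullet\otimes_\LL\ZZ$, indexed by subsets $T\subset\{1,\ldots,n-1\}$, into two-term complexes
$$\ZZ\cdot e_{T\cup\{0\}}\xrightarrow{\ \pm p\ }\ZZ\cdot e_T$$
placed in homological degrees $|T|+1$ and $|T|$. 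Since multiplication by $p$ is injective on $\ZZ$, the kernel vanishes and we obtain
$$\Tor^{\LL}_j(M,\LL/\LL^{<0})\ \cong\ \bigoplus_{\substack{T\subset\{1,\ldots,n-1\}\\ |T|=j}}\ZZ/p,$$
with the summand indexed by $T$ placed in degree $\deg x-\sum_{i\in T}(p^i-1)$.

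It remains to verify the two bounds. The upper bound $k\le\deg x$ is automatic since every $p^i-1\ge 0$ (with equality only at $T=\emptyset$, $j=0$). For the lower bound, the minimum of the degree over $T$ with $|T|=j$ is attained at $T=\{n-j,n-j+1,\ldots,n-1\}$, giving
$$\deg x - \bigl(f(n)-f(n-j)-j\bigr)\ \ge\ f(n-j)+j\ \ge\ j+1,$$
using the hypothesis $\deg x\ge f(n)$ together with $n-j\ge 1$ (automatic, as $|T|=j$ forces $j\le n-1$; for $j\ge n$ the group $\Tor_j$ is zero, making the claim vacuous). The only delicate point in the argument is setting up the Koszul complex with the correct cohomological grading and tracking the shift by $\deg x$; no substantive obstacle is anticipated.
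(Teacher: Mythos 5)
Your proof is correct, and it uses the same Koszul resolution as the paper, but the execution differs in a way worth noting. The paper's proof stops at a degree estimate on the terms of the Koszul resolution: from the minimal degree of $K^j$ it deduces that $\Tor_j$ is concentrated in degrees $\ge j+1$ for $j<n$ and in degrees $\ge n$ for $j=n$, and then it must handle $j=n$ by a separate check, identifying the top differential of the tensored complex as multiplication by $p$ and concluding $\Tor_n=0$. You instead push the Koszul complex all the way through the tensor: observing that $v_1,\ldots,v_{n-1}$ die in $\LL/\LL^{<0}$ while $p$ survives, the tensored complex decomposes as a direct sum of two-term complexes $\ZZ \xrightarrow{\pm p}\ZZ$ indexed by $T\subset\{1,\ldots,n-1\}$, which gives the exact answer $\Tor_j \cong \bigoplus_{|T|=j}\ZZ/p$ in one stroke. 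This is a genuinely cleaner route: it avoids the ad hoc special case at $j=n$ (that case just vanishes because $T$ cannot have size $n$), it yields the precise Tor groups rather than only bounds, and it makes the vanishing of $\Tor_j$ for $j\ge n$ immediate. Your degree bookkeeping and the final inequality $\deg x - (f(n)-f(n-j)-j)\ge f(n-j)+j\ge j+1$ for $j\le n-1$ are both correct.
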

\begin{proof}
Ideal $I(p,n)$ is regular, and the $\LL$-module $\LL/I(p,n)x$
has a free Koszul resolution $K^\bullet$, s.t. in the $j$-th term of this resolution
we have a free module $\bigwedge^{j} \left(\oplus_{i=0}^{n-1} \LL\cdot v_i\cdot \right) x$. 
Thus, the minimal degree of this free module 
is $\deg x + \sum_{k=n-j}^{n-1}(1-p^k)=\deg x + j - (p^{n-j}+\ldots + p^{n-1})$.
The condition that this degree is at least $j$
is the same as $\deg x\ge p^{n-j}+\ldots + p^{n-1}$.

The strongest case of this inequality is for $j=n$,
 when this is precisely $\deg x\ge f(n)$. 
 This proves that $\Tor^{\LL}_j(M, \LL/\LL^{<0})$ is concentrated in degrees greater than $j$
 for $j\neq n$, and in degrees not less than $n$ for $j=n$.

The map $K^n\ot_\LL \LL/\LL^{<0}\rarr K^{n-1}\ot_\LL \LL/\LL^{<0}$
looks like: 
$$ \ZZ v_0\wedge \cdots \wedge v_{n-1} 
 \rarr \oplus_{i=0}^{n-1} \ZZ v_0\wedge \cdots \hat{v_i}\cdot  \wedge v_{n-1}, $$ 
which sends the generator 
$v_0\wedge \cdots \wedge v_{n-1}$ to $p v_1\wedge \cdots \wedge v_{n-1}$.
Thus, the group $\Tor^{\LL}_n(M, \LL/\LL^{<0})$ is zero.
\end{proof}

\begin{Th}\label{th:syzygies}
Syzygies Conjecture is true, 
and moreover $\Tor^\LL_j(\Omega^*(X), \LL/\LL^{<0})$ equals to zero in degree $j$. 
\end{Th}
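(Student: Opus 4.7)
The plan is to apply the homological criterion of Proposition \ref{prop:tor_syzygies} and amplify it to the stronger vanishing statement. Since $\Omega^*(X)$ is generated as an $\LL$-module in codimensions $[0,d]$ by the Levine--Morel generalised degree formula, Proposition \ref{prop:tor_syzygies} reduces the Syzygies Conjecture to verifying that $\Tor_j^\LL(\Omega^*(X),\LL/\LL^{<0})$ is concentrated in degrees $k\in[j,d]$ and is free in degree $j$; the moreover-clause then amounts to upgrading the freeness to outright vanishing in degree $j$, which I will establish for $j\geq 1$ (note that for $j=0$ the summand $\LL\cdot 1_X$ produces a copy of $\ZZ$ in degree $0$ inside $\Tor_0$, so vanishing cannot hold at $j=0$ and the stated strengthening can only be meant for $j\geq 1$).

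To carry out the reduction I would filter $\Omega^*(X)$ by the finite topological filtration $\Omega^*(X)=\tau^0\supset\tau^1\supset\cdots\supset\tau^{d+1}=0$, whose successive quotients are the modules $\Omega^*_{(r)}(X)$ for $0\leq r\leq d$. Proposition \ref{prop:tor_syzygies} asserts that the Syzygies class is extension-closed, and the long exact sequence of Tor likewise propagates vanishing of $\Tor_j^\LL(-,\LL/\LL^{<0})$ in a fixed degree through extensions, so it suffices to establish both conclusions for each $\Omega^*_{(r)}(X)$ separately. The case $r=0$ is trivial since $\Omega^*_{(0)}(X)\cong\LL$ is free. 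For $r\geq 1$, Theorem \ref{th:filtration} presents $\Omega^*_{(r)}(X)$ as a filtered colimit of finitely presented $\LL$-modules, each admitting a finite filtration whose factors are of the form $\LL e_j$ with $\deg e_j\geq 0$, or $\LL/I(p,n)e_i$ with $\deg e_i\geq \frac{p^n-1}{p-1}$. Because $\Tor$ commutes with filtered colimits, everything reduces to checking the two types of elementary cyclic factors.

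For a free factor $\LL e_j$ one has $\Tor_0^\LL(\LL e_j,\LL/\LL^{<0})=\ZZ$ concentrated in degree $\deg e_j\in[0,d]$, and all higher $\Tor_j^\LL$ vanish, so both conditions are satisfied. For a torsion factor $\LL/I(p,n)e_i$ with $\deg e_i\geq \frac{p^n-1}{p-1}$, Lemma \ref{lm:koszul_resolution} computes via the Koszul resolution that $\Tor_j^\LL$ is concentrated in degrees $[j+1,\deg e_i]$, which simultaneously yields the range bound and the desired vanishing in degree $j$ for every $j\geq 0$. Gluing these pieces along the finite filtration of each finitely presented submodule via long exact Tor sequences, then passing to the filtered colimit to recover $\Omega^*_{(r)}(X)$, and finally assembling along the topological filtration of $\Omega^*(X)$, yields the claimed bounds on the support of $\Tor_j^\LL(\Omega^*(X),\LL/\LL^{<0})$ together with the vanishing in degree $j$ for $j\geq 1$.

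The homological assembly outlined above is routine; the substantive content sits entirely in the earlier structural results. The numerical miracle that drives the proof is the precise matching between the bound $\deg e_i\geq \frac{p^n-1}{p-1}$ produced by the delicate (non-)linearity of symmetric operations in Theorem \ref{th:filtration} and the vanishing range $[j+1,\deg e_i]$ in the Koszul computation of Lemma \ref{lm:koszul_resolution}: any strictly weaker version of this degree bound would fail to force $\Tor_j^\LL$ out of degree $j$, and the stronger vanishing in the theorem would collapse. Thus the conceptual difficulty has already been surmounted in Section \ref{sec:topfilt_symm}, and what remains is precisely the organised gluing described in the preceding paragraphs.
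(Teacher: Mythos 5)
Your argument is correct and follows the paper's own proof essentially verbatim: reduce via Prop.~\ref{prop:tor_syzygies} and the topological filtration to the pieces $\Omega^*_{(r)}(X)$, decompose these using Th.~\ref{th:filtration}, and invoke Lemma~\ref{lm:koszul_resolution} together with the fact that $\Tor$ commutes with filtered colimits. You are in fact slightly more careful than the published argument: the paper's terse proof does not explicitly mention the free factors $\LL e_j$, and your observation that the `moreover' vanishing can only hold for $j\ge 1$ (since the summand $\LL\cdot 1_X$ contributes a copy of $\ZZ$ to $\Tor_0$ in degree $0$) is a valid correction of the theorem's imprecise wording.
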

\begin{proof}

It is enough to prove that $\Omega^*_{(r)}(X)$ satisfies conditions of Prop. \ref{prop:tor_syzygies}
together with the following: $$\Tor^\LL_j(\Omega^*(X), \LL/\LL^{<0})^j=0.$$
However, by Th. \ref{th:filtration} it is a union of filtered $\LL$-modules 
with graded factors isomorphic to $\LL/I(p,n)x$ as in Lemma \ref{lm:koszul_resolution}.
For these modules $\Tor$'s are zero,
and as $\Tor$ commutes with unions (generally with filtered colimits) we are done.
\end{proof}

In one of few non-trivial examples where $\Omega^*(X)$ can be computed,
namely, for a Pfister quadric, Vishik checked in \cite[Example 4.6]{Vish_Lazard} 
that $\Tor^\LL_j(\Omega^*(X), \LL/(2,\LL^{<0}))$
can be non-zero in degree $j$. It shows the sharpness 
of the obtained estimates on degrees of a free $\LL$-resolution 
of $\Omega^*(X)$.

%%%%%%%%%%%%%%% REFERENCES %%%%%%%%%%%%%%%

\medskip

\medskip

\noindent
\sc{Pavel Sechin\\
Mathematisches Insitut, Ruprecht-Karls-Universität Heidelberg, \\
Mathematikon, Im Neuenheimer Feld 205, 69120 Heidelberg, Germany\\
{\tt psechin@mathi.uni-heidelberg.de}
}

\end{document}